\numberwithin{equation}{section}
\theoremstyle{plain}
\newtheorem{thm}[equation]{Theorem}
\newtheorem{prop}[equation]{Proposition}
\newtheorem{lem}[equation]{Lemma}
\newtheorem{hyp}[equation]{Hypothesis}
\theoremstyle{definition}
\newtheorem{defn}[equation]{Definition}
\newtheorem{rem}[equation]{Remark}
\DeclareMathOperator{\im}{im}
\DeclareMathOperator{\diag}{diag}
\DeclareMathOperator{\GL}{GL}
\DeclareMathOperator{\PGL}{PGL}
\DeclareMathOperator{\SL}{SL}
\DeclareMathOperator{\SU}{SU}
\DeclareMathOperator{\Sp}{Sp}
\DeclareMathOperator{\U}{U}
\DeclareMathOperator{\SO}{SO}
\DeclareMathOperator{\Spin}{Spin}
\DeclareMathOperator{\PU}{PU}
\DeclareMathOperator{\Gal}{Gal}
\DeclareMathOperator{\Hom}{Hom}
\DeclareMathOperator{\Ind}{Ind}
\DeclareMathOperator{\cInd}{c-Ind}
\DeclareMathOperator{\Rest}{R}
\DeclareMathOperator{\coker}{coker}
\newcommand{\ff}{\mathfrak{f}}
\newcommand{\sfG}{\mathsf{G}}
\newcommand{\sfH}{\mathsf{H}}
\newcommand{\sfS}{\mathsf{S}}
\newcommand{\TT}{\mathcal{T}}
\newcommand{\mult}{^\times}
\newcommand{\bdd}{_{\mathrm{b}}}
\newcommand{\odd}{_{\mathrm{odd}}}
\newcommand{\der}{{\mathrm{der}}}
\begin{document}

\title{Self-dual cuspidal representations}
\subjclass{20C33, 22E50}
\keywords{
	finite reductive group,
	$p$-adic group,
	cuspidal representation,
	supercuspidal representation,
	self-dual
}

\author{Jeffrey D. Adler}
\thanks{The first-named author
was partially supported by
the American University College of Arts and Sciences
Faculty Research Fund.}
\address{
Department of Mathematics and Statistics \\
American University \\
4400 Massachusetts Ave NW \\
Washington, DC  20016-8050 \\
USA
}
\email{jadler@american.edu}

\author{Manish Mishra}
\thanks{The second-named author was partially supported 
by SERB MATRICS and SERB ECR grants}
\address{
Department of Mathematics \\
Indian Institute for Science Education and Research \\
Dr.\ Homi Bhabha Road, Pashan \\
Pune 411 008 \\
India 
}
\email{manish@iiserpune.ac.in}

\date{2019 October 28}

\begin{abstract}
Let $G$ be a connected reductive group
over a finite field $\ff$ of order $q$.
When $q\leq 5$, we make further assumptions on $G$.
Then
we determine precisely when $G(\ff)$ admits irreducible, cuspidal
representations that are self-dual, of Deligne-Lusztig type,
or both.
Finally, we outline some consequences for the existence
of self-dual supercuspidal representations of reductive $p$-adic groups.
\end{abstract}

\maketitle

\section{Introduction }
\label{sec:intro}
Let $G$ denote a connected reductive $F$-group,
where $F$ is either a finite
or a local nonarchimedean field.
A representation $\pi$ of $G(F)$
is called \emph{self-dual}
(or sometimes \emph{self-contragredient} when $F$ is local)
if $\pi$ is isomorphic to its (smooth) dual $\pi^\vee$.
In this article, we give necessary and sufficient conditions
for the existence of complex, irreducible, self-dual
(super)cuspidal representations of such groups.

When $F$ is finite, we also determine when one can find
such representations that are of Deligne-Lusztig type.
Using that result, we determine, in the case where $F$ is local,
when one can find such representations which are \emph{regular}
and have depth zero.
The latter are those representations that arise from suitable Deligne-Lusztig cuspidal
representations of reductive quotients of parahoric subgroups of $G(F)$ (see Definition \ref{def:reg-dep0}).

The existence or non-existence of self-dual representations for specific groups
has been studied in several works.
Suppose $F$ is local.
When $G$ is a general linear group,
the first-named author gave necessary and sufficient conditions
for the existence of \emph{tame} self-dual supercuspidal 
representations \cite{adler:self-contra}.
Here, ``tame'' refers to the supercuspidals
constructed by Howe \cite{howe77}.
When the residue characteristic $p$ of $F$ is even,
Bushnell-Henniart showed the existence of self-dual representations
for linear groups and division algebras of odd degree \cite{BH2011}. 
For division algebras of odd degree when $p$ is odd,
D. Prasad showed the non-existence 
of self-dual representations of dimension greater than $1$ \cite{dprasad:div-alg}.

Some of our results require us to impose certain hypotheses on $G$.
These hypotheses disallow $G$ to have 
certain small-rank factors of type ${}^2A_k$, $k\leq 4$ (see Terminology)
when the field (in case $F$ is finite)
or the residue field (in case $F$ is local) is of cardinality $\leq 5$. 

Our main results are as follows.
\begin{enumerate}[(A)]
\item
(Theorem \ref{thm:existence-sd-finite})
Let $\ff$ denote a finite field of order $q$.
Let $G$ denote a connected reductive $\ff$-group.
Then $G(\ff)$ admits irreducible, cuspidal representations.
If $G$ satisfies Hypothesis \ref{hyp:SU-restrictions}(a),
then $G(\ff)$ admits irreducible, cuspidal, \emph{Deligne-Lusztig}
representations.
If $G$ also satisfies Hypothesis \ref{hyp:SU-restrictions}{(b)},
then 
the following are equivalent.
\begin{enumerate}[(i)]
\item
$G(\ff)$ admits irreducible, \emph{self-dual}, cuspidal representations.
\item
$G(\ff)$ admits irreducible, self-dual, cuspidal,
\emph{Deligne-Lusztig} representations.
\item
$G$ has no simple factor of type $A_n$ for any even $n$
(see ``Terminology'' below).
\end{enumerate}
\item
Let $F$ denote a non-archimedean local field with residue field
$\ff$ of order $q$.
Let $G$ denote a connected reductive $F$-group.
If $G$ satisfies Hypothesis \ref{hyp:SU-restrictions-padic}(a),
then
(Theorem \ref{thm:self-dual-sc})
$G(F)$ admits regular, depth-zero, supercuspidal representations.
Moreover,
if $G$ also satisfies Hypotheses \ref{hyp:SU-restrictions-padic}(b)
and \ref{hyp:S-decomp},
then
$G(F)$ admits irreducible, \emph{self-dual}, regular,
depth-zero supercuspidal representations.
Moreover
(combining this result with Proposition \ref{prop:no-self-dual}),
if the residue characteristic $p$ of $\ff$ is odd,
then the following are equivalent:
\begin{enumerate}[(i)]
\item
$G(F)$ admits irreducible, self-dual, supercuspidal
representations;
\item
$G(F)$ admits irreducible, self-dual, regular,
\emph{depth-zero} supercuspidal representations;
\item
$G$ has no $F$-almost-simple factor of type $A_n$ for any even $n$.
\end{enumerate}
Finally, (Theorem \ref{thm:self-dual-sc-even})
if $p=2$, and either $q\neq 2$ or $G$ has no factor of type
$^2A_3$ or $^2A_4$,
then $G(F)$ admits irreducible, self-dual supercuspidal representations.
\end{enumerate}

In the course of the proofs of our theorems, we show the existence
of cuspidal representations of all connected reductive $\ff$-groups
(Theorem \ref{thm:existence-cuspidal-finite}),
and thus depth-zero supercuspidal representations of all 
connected reductive $F$-groups
(Proposition \ref{prop:existence-sc-depth0}),
without any restriction on $\ff$ or $F$.
This result is folklore, and can be inferred from 
\cite{dat-orlik-rapoport:period-domains}*{Prop.\ 7.1.4}
using some facts about dual groups.
(The proof in \emph{loc.~cit.} omitted the case of the group $G_2(2)$, but the
result is nonetheless true, as can be seen below.) When $F$ has characteristic zero,
the existence of supercuspidals was also proved by Beuzart-Plessis
\cite{beuzart-plessis:sc} using methods of harmonic analysis, bypassing
questions about finite groups.

Our proofs of Theorems (A) and (B) are mostly uniform, except for the fact
that certain unitary and orthogonal groups require special
handling, as do several other groups when $q=2$.

We thank Tasho Kaletha and Loren Spice for helpful conversations;
Dipendra Prasad for pointing out a serious error in an earlier draft;
and an anonymous referee for many helpful suggestions to improve
clarity.
Our proof of Proposition \ref{prop:coxeter} benefits from an idea in the
proof of \cite{dat-orlik-rapoport:period-domains}*{Prop.\ 7.1.4},
which shows the existence of semisimple elliptic elements
in an arbitrary finite reductive group. We also learned useful things from  
an unpublished note of Arno Kret on the existence of cuspidal representations.

\subsection*{Terminology}
Let $F$ be any finite or nonarchimedean local field.
If $G$ is $F$-almost-simple,
then $G$ is isogenous to $\Rest_{E/F} H$ for some finite extension $E/F$
and some absolutely almost-simple group $H$.
If we say that $G$ has a certain ``type'', then we are specifying
both the absolute root system of $H$ and the $*$-action on this root system
of the absolute Galois group of $E$.
The ``type'' will sometimes include an indication of the order of $E$,
if $E$ is finite.
Thus, for example,
``$A_n$'' refers to a group that is $F$-isogenous to 
an inner form of
$\Rest_{E/F}\SL_{n+1}$; 
``$^2A_n$'' refers to a group that is $F$-isogenous to 
an inner form of
$\Rest_{E/F}\SU_{n+1}$;
and
``$^2A_n(q)$'' 
refers more specifically to a group that is $F$-isogenous to
$\Rest_{E/F}\SU_{n+1}$,
where $E$ has order $q$.
Whenever $F$ is local, a twisted type (e.g., $^2 E_6$) will by default
refer to a group that splits over an \emph{unramified} extension of $E$.

\section{Cuspidals arising from elliptic tori}
\label{sec:DL-reps}

Let $G$ be a connected reductive group defined over a finite field $\ff$
of order $q$.
Let $\sigma_{\ff}$ denote the Frobenius endomorphism.
Let $B_{0}$ be a Borel $\ff$-subgroup of $G$
containing a maximally split maximal $\ff$-torus $T_{0}$ of $G$.
Let $\omega$ be
a $\sigma_\ff$-elliptic element in the absolute Weyl group $W=W(G,T_{0})$ and
let $T=T_{\omega}$ be the corresponding elliptic torus.
Note that $T$ depends only on the $\sigma_\ff$-twisted conjugacy class
of $\omega$
in $W$.
The Weyl group $W(G,T)(\ff)$ of $T$ is the $\sigma_{\ff}$-centraliser $\Omega$
of $\omega$ in $W$ \cite{carter:finite}*{Prop.\ 3.3.6}.
There is an $\Omega$-equivariant
isomorphism \cite{carter:finite}*{Prop.\ 3.2.3 and Prop.\ 3.3.4}
\begin{equation}
\label{eqn:kret}
L=L_\omega:=
\frac{X}{(\sigma_{\ff}\omega^{-1}-1)X}
\overset{\sim}{\longrightarrow}
\Hom(T(\ff),\mathbb{C}^{\times}).
\end{equation}
Here $X=X(T_{0})$ denotes the character lattice of $T_{0}$.

A complex character $\chi$ of $T(\ff)$ in general position
gives rise to a Deligne-Lusztig representation $\pi(T,\chi)$ of $G(\ff)$ which
is irreducible and cuspidal. The representation $\pi(T,\chi)$ is
self-dual if and only if the pair $(T,\chi)$ is $\Omega$-conjugate
to $(T,\chi^{-1})$ \cite{digne-michel:reps-book}*{Prop.\ 11.4}.
We will call such a character $\chi$ \emph{conjugate self-dual}.
We will call an element in $L$ 
\emph{conjugate self-dual} (resp.\ \emph{in general position})
if its inverse image under the isomorphism
in \eqref{eqn:kret}
is conjugate self-dual (resp.\ in general position). 

Thus,
to prove the existence of irreducible
self-dual Deligne-Lusztig cuspidal representations
of $G(\ff)$,
it is sufficient prove the existence of conjugate self-dual elements
in $L$ that are in general position. 

We first consider the existence of such elements in the special case
where $T$ is a Coxeter torus.

\begin{prop}
\label{prop:coxeter}
Suppose that $G$ is absolutely almost simple,
$t$ is the degree of the splitting field of $G$,
$T$ is the Coxeter torus in $G$,
and $h$ is the Coxeter number of $G$.
If $h/t$ is odd, then $T(\ff)$ has no conjugate self-dual characters
in general position.
Moreover, suppose that 
$G$ does not have type $^2A_2(2)$ or $G_2(2)$.
Then the following hold:
\begin{itemize}
\item
The group $T(\ff)$ has a character that is in general position.
\item
If $h\neq 2$, then we can choose such a character to have 
order $\ell$, where $\ell$ is a prime such that the multiplicative
order of $q$ mod $\ell$ is $h$.
\item
If $h/t$ is even,
then $T(\ff)$ has such a character that is also conjugate self-dual.
\end{itemize}
\end{prop}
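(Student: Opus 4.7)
My plan is to translate everything via~\eqref{eqn:kret}: a character $\chi$ of $T(\ff)$ corresponds to some $\lambda\in L$ with $\chi^{-1}\leftrightarrow -\lambda$, so general position means that the $\Omega$-stabilizer of $\lambda$ is trivial and conjugate self-duality means that $-\lambda\in\Omega\cdot\lambda$. For a Coxeter torus, $\Omega$ is cyclic of order $h/t$, generated by the image of $\omega$, and $\omega$ acts on $L$ via its natural action on $X$; moreover, the cyclotomic factor $\Phi_h(q)$ divides $|L|$.

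For the first claim, if $\lambda\in L$ is in general position and $\omega^k\lambda=-\lambda$, then $\omega^{2k}\lambda=\lambda$, so general position forces $(h/t)\mid 2k$; with $h/t$ odd this yields $(h/t)\mid k$, and hence $2\lambda=0$. A case-by-case inspection across the types with $h/t$ odd---namely $A_n$ with $n$ even, ${}^2A_n$ with $n\equiv 1\pmod 4$, and ${}^2D_n$ with $n$ even---then shows that $|L|$ is odd, so $\lambda=0$, contradicting general position.

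For the remaining claims, assume $G$ is neither ${}^2A_2(2)$ nor $G_2(2)$. The main tool is Zsygmondy's theorem on primitive prime divisors of $q^h-1$: outside its exceptions $(q,h)=(2,6)$ and the Mersenne case $h=2$, there exists a prime $\ell$ with $\mathrm{ord}_\ell(q)=h$ dividing the factor $\Phi_h(q)$ of $|L|$. The $(2,6)$ exception accounts for $G_2(2)$ and, via the twisted normalization, ${}^2A_2(2)$; the Mersenne case is excluded by the hypothesis $h\ne 2$. For $\lambda\in L$ of order~$\ell$, the $\omega$-action on the $\mathbb{F}_\ell$-vector space $L[\ell]$ has $\overline{\mathbb{F}}_\ell$-eigenvalues that are primitive $(h/t)$-th roots of unity, so no nontrivial power of $\omega$ has eigenvalue $1$ on $L[\ell]$ and $\lambda$ has trivial $\Omega$-stabilizer. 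When $h/t$ is even, $\omega^{(h/t)/2}$ acts on $L[\ell]$ as $-1$ (being a primitive $(h/t)$-th root of unity raised to the $(h/t)/2$), so $\omega^{(h/t)/2}\lambda=-\lambda$ and conjugate self-duality follows. Bare existence of a general-position character when $h=2$ reduces to type $A_1$ with $|L|=q+1>1$, handled directly. The principal obstacle I foresee is the twisted cases: the ``effective'' Coxeter order governing $|L|$ can differ from~$h$ (for ${}^2A_n$ with $n$ even, the usual Coxeter element of $W(A_n)$ is $\sigma_{\ff}$-conjugate to the identity, forcing one to realize the Coxeter torus via a twisted Coxeter element whose order is not $h$), so careful bookkeeping is required to verify that the Zsygmondy exceptions reduce exactly to the stated list of excluded cases.
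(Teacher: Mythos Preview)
Your overall strategy matches the paper's: translate to $L$ via \eqref{eqn:kret}, use that $\Omega$ is cyclic of order $h/t$, and for existence invoke Zsygmondy (the paper cites Birkhoff--Vandiver) to produce a prime $\ell$ with $\mathrm{ord}_\ell(q)=h$ dividing $\Phi_h(q)\mid |L|$. However, there are two genuine gaps.

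\medskip
\textbf{The ``$|L|$ is odd'' step does not work.} Your list of types with $h/t$ odd is wrong: in fact $h/t$ is odd for $A_n$ ($n$ even), ${}^2A_n$ (\emph{all} $n\geq 2$), ${}^2D_n$ ($n$ odd), and ${}^2E_6$; in particular ${}^2D_n$ with $n$ even does \emph{not} belong on the list. More seriously, $|L|$ is not always odd in the relevant cases. For example, for $G$ of type ${}^2A_3$ the Coxeter torus has $|T(\ff)|=q^3+1$, which is even whenever $q$ is odd; the same happens for ${}^2D_n$ with $n$ odd. So reaching $2\lambda=0$ does not by itself give a contradiction. The paper closes this gap without any case analysis by using the key structural fact you do not exploit: the generator of $\Omega$ acts on $L$ as multiplication by $q^t$ (since $w^{-1}$ acts by $q$ on $L$). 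Once you know $2\lambda=0$, then $q^t\lambda=\lambda$ if $q$ is odd, and $q^t\lambda=0$ (hence $\lambda=0$) if $q$ is even; either way the $\Omega$-stabilizer of $\lambda$ is nontrivial, contradicting general position.

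\medskip
\textbf{The Zsygmondy exception $(q,h)=(2,6)$ is not confined to $G_2(2)$ and ${}^2A_2(2)$.} The types with Coxeter number $6$ are $A_5$, $C_3$, $D_4$, $G_2$, ${}^2A_2$, and ${}^2A_3$. The proposition only excludes $G_2(2)$ and ${}^2A_2(2)$, so for $A_5(2)$, $C_3(2)$, $D_4(2)$, and ${}^2A_3(2)$ you must still produce a character in general position (and, except for ${}^2A_3$ where $h/t=3$ is odd, a conjugate self-dual one). The paper does this by hand: in each of these four cases $T(\ff)$ contains a cyclic subgroup of order $9$, and a generator $v$ of such a subgroup satisfies $2^3 v=-v$ while $2^i v\neq v$ for $0<i<6$, so $v$ is in general position and (when $h/t$ is even) conjugate self-dual. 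Your plan omits this step entirely.
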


\begin{proof}
Let $\omega$ be a
$\sigma_\ff$-Coxeter element of $W$.
The endomorphism $\sigma_\ff$ of $X$ is of the form 
$\sigma_{0} \cdot q$, where $\sigma_{0}$ is a finite-order automorphism of $X$.
Write $w:=\sigma_{0}\omega^{-1}$ and let $t$ denote the order of $\sigma_0$.
Then
$\Omega$ is a cyclic group generated by
$(w^{-1})^t$ \cite{springer:regular-elements}*{Theorem 7.6(v)}
and $w^{-1}$ acts on the abelian group $L$ by multiplication by $q$. 

Suppose that $h/t$ is odd.
Let $u\in L$ be conjugate self-dual and in general position.
If $u=-u$, then $2u=0$.
Therefore, $qu=u$ if $q$ is odd and $qu=0$ if $q$ even.
In either case, this contradicts $u$ being in general position.
So $(q^t)^{\alpha}u=-u$ for some $0<\alpha<h/t$
and therefore $((q^t)^{2\alpha}-1)u=0$.
But then $2\alpha=h/t$ since $u$ is in general position.
But this contradicts the fact that $h/t$ is odd. 

Now suppose that $h=2$. 
Then $X$ has rank one and $L=\frac{X}{(q+1)X}$.
In this case, a generator of $L$ is a conjugate self-dual
element in general position.

Now suppose that $h \neq 2$ and $(q,h) \neq (2,6)$.
Then by \cite{birkhoff-vandiver}*{Theorem V},
there exists a prime $\ell$ such that the multiplicative
order of $q$ mod $\ell$ is $h$.
Let $\mathrm{ch}_{w}$
denote the characteristic polynomial of the action of $w$ on $X$,
and let $r$ denote the rank of $X$.
Then the order $|L|$ is
$|\det(wq-1)|=|q^{-r}\det(w-q^{-1})|=|q^{-r}\mathrm{ch}_{w}(q^{-1})|
=|\mathrm{ch}_{w}(q)|$.
The last equality follows because $\mathrm{ch}_{w}$
is a product of cyclotomic polynomials,
and thus its sequence of coefficients is symmetric.

Let $\Phi_{h}$ denote the $h^{\mathrm{th}}$ cyclotomic polynomial.
Then $\ell$ divides $\Phi_{h}(q)$.
By \cite{springer:regular-elements}*{Theorem 7.6(ii)},
$\Phi_{h}\mid\mathrm{ch}_{w}(q)$ and therefore $\ell\mid\mathrm{ch}_{w}(q)$.
Therefore $L$ has a cyclic subgroup $C$ of order $\ell$.
Let $v$ be a generator of $C$.
Then $v$ is in general position.

Suppose $h/t$ is even.
Since $q$ has order $h$ mod $\ell$, it follows that $\ell\nmid(q^{h/2}-1)$.
Therefore $\ell\mid(q^{h/2}+1)$. Thus $(q^t)^{(h/t)/2}$ acts by $-1$ on $v$
and therefore $v$ is conjugate self-dual.

It remains to handle the cases where $(q,h)=(2,6)$.
From 
\cite{humphreys:reflection}*{\S3.18, Table 2}
and
\cite{springer:regular-elements}*{Table 10, page 184},
$G$ has one of the following types:
$A_5$, $C_3$, $D_4$, $G_2$,
${}^2A_2$, ${}^2A_3$.
By hypothesis, $G$ does not have type $^2A_2$ or $G_2$,
and we consider each of the other cases in turn.
From Lemma \ref{lem:trivial-iso}, we may replace $G$ by
any isogenous group.
In each case,
it will be sufficient to find a cyclic subgroup of
$L$ (equivalently, of $T(\ff)$)
of order $9$.
For let $v$ be a generator of such a subgroup.
Then $2^3v= -v$, and $2^i v \neq v$ for all $0<i<6$,
so $v$ is in general position.
Moreover, $v$ is conjugate self-dual if $h/t$ is even, i.e.,
$G$ does not have type $^2A_3$.
\begin{description}
\item[Type $A_5$]
The group $T(\ff)$ is cyclic of order $63$,
so it contains a cyclic subgroup of order $9$.
\item[Type $C_3$]
The group $T(\ff)$ is isomorphic to the kernel of $N_{E/K}$,
where $K/\ff$ is a cubic extension and $E/K$ is a quadratic
extension.
Thus, $T(\ff) \cong E\mult / K\mult$, a cyclic group of order $9$.
\item[Type $D_4$]
The group $T(\ff)$ is isomorphic to 
$\mathbb{Z}/3\mathbb{Z}\times \mathbb{Z}/9\mathbb{Z}$,
so it contains a cyclic subgroup of order $9$.
\item[Type $^2A_3$]
The group $T(\ff)$ is cyclic of order $q^3+1=9$.
\qedhere
\end{description}
\end{proof}

\begin{lem}
\label{lem:coprime}
Let $G$ and $\ell$ be as in Proposition \ref{prop:coxeter}
and let $Z$ denote the 
center of $G$.
Assume that $h \neq 2$ and $(q,h) \neq (2,6)$.
Then $\ell$ is co-prime to $|Z|$.
\end{lem}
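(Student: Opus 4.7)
My plan rests on a single elementary number-theoretic input: since $q$ has multiplicative order $h$ modulo $\ell$, the order $h$ divides $\ell - 1$, so $\ell \geq h+1 > h$. Consequently, to conclude $\gcd(\ell, |Z|) = 1$, it suffices to show that every prime dividing $|Z|$ is bounded above by $h$; since $\ell$ is itself prime and strictly greater than $h$, it cannot coincide with any such prime.

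I will therefore proceed with a case analysis on the absolute type of $G$, using the standard tables for the Coxeter number (e.g., those in \cite{humphreys:reflection} and \cite{springer:regular-elements}) together with the familiar bound that $|Z|$ divides the connection index of the root system. For untwisted type $A_n$ the bound is immediate: $|Z|$ divides $n+1 = h$. For twisted type $^2A_n$ the connection index is still $n+1$, while the twisted Coxeter number is $h = 2(n+1)$ when $n$ is even and $h = 2n$ when $n$ is odd (the case $n=1$ being ruled out by $h\neq 2$); in either case $h\geq n+1$, so every prime dividing $|Z|$ is at most $n+1\leq h$. For the other classical types ($B_n$, $C_n$, $D_n$, and their twisted forms), the connection index is at most $4$, so the only prime that can divide $|Z|$ is $2$, which is strictly less than $h$ whenever $h\neq 2$. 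For exceptional types, $|Z|$ has prime divisors only in $\{2,3\}$, while $h\geq 6$ throughout.

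The main obstacle, though minor, lies in the twisted case $^2A_n$: one must read off the correct value of the twisted Coxeter number, distinguishing the two parities of $n$, and verify that this number always dominates $n+1$ under the hypothesis $h\neq 2$. Once this is settled, in every case $\ell > h$ exceeds every prime divisor of $|Z|$, completing the proof.
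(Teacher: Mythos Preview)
Your argument is correct and close in spirit to the paper's, but organized around a cleaner unifying observation. Both proofs perform a case analysis on the type of $G$ and use that $|Z|$ divides the connection index. The difference is in how the contradiction with $\ell \mid |Z|$ is reached. You extract the single bound $\ell > h$ from $h \mid \ell - 1$ and then need only check that every prime divisor of the connection index is $\leq h$, which is immediate from the tables. The paper instead argues case by case: for types with $|Z|$ a $2$-group it notes directly that $\ell\neq 2$ since $h>2$; for $E_6$ and ${}^2E_6$ it rules out $\ell=3$ by observing $q^2\equiv 0$ or $1\bmod 3$; and for type $A_{n-1}$ it supposes $\ell\mid n=h$ and uses Fermat's little theorem to get $q^{n/\ell}\equiv q^n\equiv 1\bmod\ell$, contradicting that $q$ has order $n$. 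Your formulation subsumes all of these at once, so the argument is shorter and more uniform; the paper's version is more hands-on but amounts to the same thing. One small point: your handling of ${}^2A_n$ (checking that the twisted Coxeter number $h$ is $2(n+1)$ for $n$ even and $2n$ for $n$ odd, hence $\geq n+1$) is exactly what is needed, and matches the values implicit in the paper's $(q,h)=(2,6)$ list.
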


\begin{proof}
Assume that the absolute root system of $G$ is of type
other than $A_{n-1}$ or $E_6$.
Then $Z$ is a $2$-group. 
Since the order of $q \bmod \ell$ is $h>2$,
we cannot have $\ell = 2$.

Now suppose $G$ is of type $E_6$ or ${}^2E_6$.
Then $h=12$ or $18$ and $|Z|$ divides $3$.
Suppose $\ell=3$.
Then $q^2 \bmod \ell$ is $0$ or $1$,
contradicting that the order of $q$ is $h$.

If $G$ is of type $A_{n-1}$, then $|Z|$ divides $h=n$.
If $\ell$ divides $n$,
then $q^{n/\ell}\equiv q^n \equiv 1 \bmod \ell$,
contradicting 
the order of $q$.
Similar arguments work for type ${}^2A_n$ ($n\geq 2$).
\end{proof}

\section{Useful facts about isogenies}
\begin{lem}
\label{lem:trivial-iso}
Let $\ff$ be a finite field, and
$\zeta:G\twoheadrightarrow G^{\prime}$
a central $\ff$-isogeny of connected reductive $\ff$-groups.
If the kernel has no nontrivial $\ff$-points, then $\zeta$
induces an isomorphism
$G(\ff) \overset{\sim}{\longrightarrow} G'(\ff)$.
\end{lem}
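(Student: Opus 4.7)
The plan is to apply Galois cohomology to the short exact sequence of $\ff$-group schemes
$$1 \to Z \to G \xrightarrow{\zeta} G' \to 1,$$
where $Z := \ker \zeta$ is a finite central subgroup of $G$. Commutativity of $Z$ yields the exact sequence
$$1 \to Z(\ff) \to G(\ff) \to G'(\ff) \to H^1(\ff, Z).$$
The hypothesis $Z(\ff) = 1$ immediately gives injectivity of $G(\ff) \to G'(\ff)$, so it suffices to prove $H^1(\ff, Z) = 1$.

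For this I would invoke the standard Euler-characteristic fact for finite abelian group schemes over finite fields. Since $\ff$ is perfect, $Z(\ff) = Z(\overline{\ff})^{\sigma_{\ff}}$, and the action of $\sigma_{\ff}$ on the finite abelian group $Z(\overline{\ff})$ fits into the exact sequence
$$0 \to Z(\overline{\ff})^{\sigma_{\ff}} \to Z(\overline{\ff}) \xrightarrow{\sigma_{\ff} - 1} Z(\overline{\ff}) \to H^1(\ff, Z) \to 0,$$
whence $|H^1(\ff, Z)| = |Z(\overline{\ff})^{\sigma_{\ff}}| = |Z(\ff)| = 1$. Equivalently, this is Lang's theorem applied to the finite group $Z(\overline{\ff})$: injectivity of $\sigma_{\ff} - 1$ on this finite set forces bijectivity, so any lift to $G(\overline{\ff})$ of an element of $G'(\ff)$ can be corrected by an element of $Z(\overline{\ff})$ to produce a lift inside $G(\ff)$.

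The argument is essentially formal, and I do not anticipate a serious obstacle. The only mild point of care is that $Z$ may be non-smooth in positive characteristic, but this is harmless: $Z(\overline{\ff})$ is still a finite abelian group on which $\sigma_{\ff}$ acts, and the equality $Z(\overline{\ff})^{\sigma_{\ff}} = Z(\ff)$ holds for any finite-type affine $\ff$-scheme, so the Euler-characteristic computation goes through verbatim.
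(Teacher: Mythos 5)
Your argument is correct and takes essentially the same route as the paper, which simply cites Steinberg's \emph{Endomorphisms of Linear Algebraic Groups} \S4.5 for the Lang-theorem exact sequence
$1 \to Z(\overline{\ff})^{\sigma_\ff} \to G(\ff) \to G'(\ff) \to Z(\overline{\ff})/(\sigma_\ff - 1)Z(\overline{\ff}) \to 1$
that you rederive via the finite-module Euler-characteristic computation. Your aside about a possibly non-smooth kernel in positive characteristic is well taken and correctly dispatched: only the abstract Galois module $Z(\overline{\ff})$ enters the five-term sequence, so no comparison with fppf cohomology of the group scheme $Z$ is needed.
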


\begin{proof}
It immediately follows from \cite{steinberg:endomorphisms}*{\S4.5}
that the cokernel
of the embedding $G(\ff)\rightarrow G^\prime(\ff)$
is trivial and therefore the map is an isomorphism. 
\end{proof}

\begin{lem}
\label{lem:odd-iso}
Let $F$ be either a finite or a nonarchimedean local field, and
$\zeta:G\longrightarrow G^{\prime}$
a central $F$-isogeny of connected reductive $F$-groups,
with kernel $K$ of odd order.
Then $G(F)$ admits
irreducible self-dual cuspidal representations
if and only if $G^{\prime}(F)$ does. 
\end{lem}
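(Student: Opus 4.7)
My plan is to apply Clifford theory to the normal finite-index subgroup $\zeta(G(F)) \subseteq G'(F)$, using that $|K|$ odd forces both $K(F)$ and the quotient $A := G'(F)/\zeta(G(F))$ to have odd order. The former is immediate since $K(F)$ is a subgroup of $K(\bar F)$, whose order divides $|K|$; the latter follows from the long exact cohomology sequence, which embeds $A$ into $H^1(F,K)$, whose exponent divides $|K|$. The key observation is that the central character of any irreducible self-dual representation takes values in $\{\pm 1\}$, so it must be trivial on any finite central subgroup of odd order. Hence any irreducible self-dual representation of $G(F)$ is trivial on $K(F)$ and descends to an irreducible representation of $\zeta(G(F))$, reducing the lemma to comparing irreducible self-dual cuspidal representations of $\zeta(G(F))$ with those of $G'(F)$.

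For the forward direction, given a self-dual cuspidal $\pi$ of $G(F)$ descending to $\tilde\pi$ on $\zeta(G(F))$, the representation $\Ind_{\zeta(G(F))}^{G'(F)}\tilde\pi$ is self-dual. By Clifford theory, each irreducible constituent $\pi'$ restricts to $\zeta(G(F))$ as $e_{\pi'}\bigoplus_{a \in A/A_{\tilde\pi}} a\tilde\pi$, where $A_{\tilde\pi}$ is the $A$-stabilizer of $\tilde\pi$, and Frobenius reciprocity yields the identity $\sum_{\pi'} e_{\pi'}^2 = |A_{\tilde\pi}|$. Since $|A_{\tilde\pi}|$ divides the odd number $|A|$, it is odd, so $\sum_{\pi'} e_{\pi'} \equiv \sum_{\pi'} e_{\pi'}^2 \equiv 1 \pmod 2$. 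The contragredient preserves both the set of constituents and each multiplicity $e_{\pi'}$, because $\tilde\pi$ is self-dual and its $A$-orbit is stable under contragredient. If the involution $\pi' \mapsto (\pi')^\vee$ had no fixed point, then $\sum_{\pi'} e_{\pi'}$ would be an even sum of paired equal values, contradicting the previous parity computation; hence some constituent $\pi'$ is self-dual. The reverse direction is simpler: for a self-dual cuspidal $\pi'$ of $G'(F)$, the pullback $\pi' \circ \zeta$ factors through $\zeta(G(F))$ and decomposes as $e\bigoplus_{a \in A/A_\sigma} a\sigma$ for some irreducible $\sigma$; self-duality forces the contragredient to act on the $A$-orbit of $\sigma$ by translation, and since that orbit has odd size, the translation is trivial and $\sigma$ is self-dual. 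Lifting $\sigma$ to $G(F)$ gives the desired representation.

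Cuspidality transfers in both directions because $\zeta$ is an isomorphism on $F$-points of each unipotent radical of a proper parabolic, so the Jacquet modules $V_{N(F)}$ and $V_{N'(F)}$ coincide. The main technical obstacle is the parity argument in the forward direction: since the extension of $\tilde\pi$ to its inertia subgroup may be obstructed by a nontrivial class in $H^2(A_{\tilde\pi},\mathbb{C}^\times)$, one cannot simply assert that there are exactly $|A_{\tilde\pi}|$ multiplicity-free constituents; the argument must instead work directly with the sum-of-squares identity and the contragredient's respect for the multiplicity decomposition.
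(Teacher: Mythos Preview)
Your approach coincides with the paper's: descend a self-dual cuspidal from $G(F)$ to $\zeta(G(F))$ using triviality of the central character on the odd-order group $K(F)$, then induce to $G'(F)$ and use an odd-parity argument to extract a self-dual constituent; in the other direction, restrict from $G'(F)$ and again argue by parity. Your Clifford-theoretic justification of the parity, via $\sum_{\pi'} e_{\pi'}^2 = |A_{\tilde\pi}|$ together with $e^2 \equiv e \pmod 2$, is in fact more careful than the paper's bare assertion that the number of irreducible components is odd, and your treatment of cuspidality through the isomorphism of unipotent radicals under $\zeta$ is the same as the paper's.

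There is, however, a genuine gap. Your opening assertion that $\zeta(G(F))$ has \emph{finite} index in $G'(F)$ is not established by the exponent bound on $H^1(F,K)$ alone, and it can fail. When $F$ is a nonarchimedean local field of odd characteristic $p$ with $p \mid |K|$, the kernel $K$ (being of multiplicative type) has $\mu_p$ as a subquotient, and $H^1_{\mathrm{fppf}}(F,\mu_p) \cong F^\times/(F^\times)^p$ is infinite; concretely, for $\SL_p \to \PGL_p$ over $\mathbb{F}_p((t))$ with $p$ odd, the cokernel $A$ is exactly $F^\times/(F^\times)^p$. In that regime your identity $\sum e_{\pi'}^2 = |A_{\tilde\pi}|$ and the claim that the orbit $A/A_\sigma$ has odd size lose their meaning, and $\zeta(G(F))$ is not even open in $G'(F)$. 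The paper singles out this case explicitly: it notes that the cokernel is then the product of a finite group of odd order and a pro-$p$-group, and invokes Silberger's theorem \cite{silberger:isogeny-restriction} that isogeny restrictions of irreducible admissible representations decompose into finite direct sums, concluding from this that the component count is still odd. To close the gap you must either restrict your argument to the cases where $A$ is genuinely finite (finite $F$, or local $F$ with $\mathrm{char}(F) \nmid |K|$), or import Silberger's finiteness result and explain why the pro-$p$ part of $A$ does not disturb the parity of the decomposition.
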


\begin{proof}
Let $\tau^{\prime}$ be a self-dual cuspidal representation of $G^{\prime}(F)$
and let $\tau$ denote its restriction to $G(F)$ along $\zeta$.
If $F$ is finite or has characteristic zero,
then the map from $G(F)$ into $G^{\prime}(F)$
has a finite cokernel of odd order,
so $\tau$ decomposes into
a finite direct sum of an odd number of irreducible representations.
More generally, the cokernel could be a product of a 
finite group of odd order and a pro-$p$-group,
where $p$ is the characteristic of $F$.
From \cite{silberger:isogeny-restriction},
we still have that $\tau$ decomposes into a finite direct sum of irreducible
components, and so the number of components must again be odd.
Therefore, at least one of the components must be self-dual.
Also by \cite{silberger:isogeny-restriction}*{Lemma 1},
since $\tau^{\prime}$ is cuspidal, 
then so is $\tau$.
Therefore, a self-dual component of $\tau$ must also be cuspidal.

Conversely,
if $(\tau,V)$ is an irreducible self-dual cuspidal representation of $G(F)$,
then
the restriction to $K(F)$ of the central character of $\tau$ 
must be self-dual, and so must take values in $\{\pm 1\}$.
But since $|K(F)|$ is odd, $\tau|_{K(F)}$ must be trivial. Therefore
$\tau$ descends to a self-dual representation of $\zeta(G(F))$.

Since $(\tau,V)$ is cuspidal,
we claim that the representation 
$(\Ind_{\zeta(G(F))}^{G^{\prime}(F)}\tau,W)$ 
is also cuspidal.
Recall that a representation $(\pi^\prime,V^{\prime})$ of $G^\prime(F)$ is 
cuspidal if and only if $V_{N^{\prime}(F)}^{\prime}=0$ for all parabolic 
$F$-subgroups $P^\prime$ of $G^\prime$ with Levi decomposition
$P^{\prime}=M^{\prime}N^{\prime}$.
Let $P$, $M$, and $N$ be the inverse images of $P'$, $M'$, and $N'$
under $\zeta$.
Then $P=MN$ is a 
parabolic $F$-subgroup of $G$,
and $\zeta$ induces an isomorphism from $N$ to $N^\prime$.
There is a natural isomorphism
$(W)_{N^{\prime}(F)}\cong\Ind_{\zeta(M(F))}^{M^{\prime}(F)}(V_{N(F)})$.
Since $\tau$ is cuspidal, $V_{N(F)}=0$, and therefore $W_{N^{\prime}(F)}=0$.
This proves that $(\Ind_{\zeta(G(F)}^{G^\prime(F)}\tau,W)$
is cuspidal. 

The representation $\Ind_{\zeta(G(F))}^{G^{\prime}(F)}\tau$
is self-dual.
It decomposes into an odd number of irreducible components
by \cite{silberger:isogeny-restriction} and Frobenius reciprocity
(in the generality presented in
\cite{bushnell-henniart:gl2-book}*{\S2.4}),
and therefore at least one of the components must be self dual. 
\end{proof}

\begin{lem}
\label{lem:odd-iso-DL}
Let $\ff$ be a finite field, and
$\zeta:G\longrightarrow G^{\prime}$
a central $\ff$-isogeny of connected reductive $\ff$-groups,
with kernel $K$ of odd order.
If $G(\ff)$ admits
irreducible self-dual Deligne-Lusztig cuspidal representations,
then so does $G^{\prime}(\ff)$.
\end{lem}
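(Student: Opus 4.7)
The plan is to transfer the Deligne--Lusztig data from $G$ to $G'$ along $\zeta$. Starting with an irreducible self-dual Deligne--Lusztig cuspidal representation $\pi(T,\chi)$ of $G(\ff)$---so $T$ is a $\sigma_\ff$-elliptic maximal $\ff$-torus of $G$, $\chi$ is a character of $T(\ff)$ in general position, and there exists $w\in\Omega:=W(G,T)(\ff)$ with $w\cdot\chi=\chi^{-1}$---I set $T':=\zeta(T)$. Then $T'$ is a $\sigma_\ff$-elliptic maximal $\ff$-torus of $G'$, $\zeta$ induces a canonical identification $\Omega\cong W(G',T')(\ff)$, and applying Lang--Steinberg to the sequence $1\to K\to T\to T'\to 1$ yields an exact sequence
\[
1\longrightarrow K(\ff)\longrightarrow T(\ff)\longrightarrow T'(\ff)\longrightarrow C\longrightarrow 1
\]
of abelian groups with $|C|$ dividing $|K|$ and hence odd.

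Next I would reduce the problem to producing a suitable character on $T'(\ff)$. Since $\chi$ is in general position, $w\cdot\chi=\chi^{-1}$ forces $w^2\in\mathrm{Stab}(\chi)=\{1\}$. The subgroup $K(\ff)$ is central in $G$ and pointwise fixed by $w$, so $\chi^2|_{K(\ff)}=1$; as $|K(\ff)|$ is odd, $\chi$ is trivial on $K(\ff)$ and descends to a character of $\zeta(T(\ff))\subseteq T'(\ff)$. My goal is to extend this descent to a character $\chi'$ of $T'(\ff)$ that is conjugate self-dual via $w$ and in general position; the corresponding $\pi(T',\chi')$ will then be an irreducible self-dual Deligne--Lusztig cuspidal representation of $G'(\ff)$, with ellipticity of $T'$ and general position of $\chi'$ ensuring irreducibility and cuspidality.

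The key step, and what I expect to be the main obstacle, is the existence of such an extension $\chi'$. The set $\Psi$ of extensions of $\chi$ to $T'(\ff)$ is a torsor over the character group $\hat{C}$, and the map $\iota\colon\chi'\mapsto w\cdot(\chi')^{-1}$ defines a permutation of $\Psi$. A short calculation using $w^2=1$ shows $\iota^2=\mathrm{id}$, so $\iota$ is an involution of a finite set of odd cardinality $|C|$ and must have a fixed point; equivalently, after choosing a basepoint the fixed-point equation becomes $(1+w)\eta=\mu$ for some $w$-invariant $\mu\in\hat{C}$, which is solvable because $1+w$ acts as multiplication by $2$ on $\hat{C}^w$ and $|\hat{C}|$ is odd. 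For any such fixed point $\chi'$, conjugate self-duality holds by construction, and its $\Omega$-stabilizer is contained in $\mathrm{Stab}(\chi)=\{1\}$ (via the $\Omega$-equivariant restriction to $\zeta(T(\ff))$), so $\chi'$ is in general position, completing the construction.
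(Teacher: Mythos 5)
Your proposal is correct and follows essentially the same route as the paper's proof: factor $\chi$ through $\zeta$ using triviality on the odd-order kernel, observe that every extension to $T'(\ff)$ stays in general position by $\Omega$-equivariant restriction, and extract a conjugate self-dual extension from the odd-cardinality set of extensions. The paper compresses your fixed-point-of-an-involution argument into the single remark that the index of $\zeta(T(\ff))$ in $T'(\ff)$ is odd, so at least one extension must be conjugate self-dual; your spelling out of the involution $\iota$ and the use of $w^2=1$ (which needs the general-position hypothesis) is exactly the implicit justification.
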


\begin{proof}
Suppose $T$ is a maximal elliptic $\ff$-torus in $G$,
and $\chi$ is a character of $T(\ff)$ that is in general
position and conjugate self-dual.
Then the restriction of $\chi$ to $K$ must be self-dual, and 
thus trivial.
Let $T'$ be the image of $T$ under $\zeta$.
Then $\chi$ factors through $\zeta$ to give a character $\chi'$
of the image $A$ of $T(\ff)$ in $T'(\ff)$,
and $\chi'$ is also in general position and conjugate self-dual.
Every extension of $\chi'$ from $A$ to $T'(\ff)$ is in general position.
Since the index of $A$ in $T'(\ff)$ is odd, at least one of these
extensions must be conjugate self-dual.
\end{proof}

\section{Simply connected groups}

We collect here some of the assumptions that we will sometimes
have to make about a connected reductive $\ff$-group $G$.

\begin{hyp}
\label{hyp:SU-restrictions}
The group $G$ has no factor of type $^2A_k(q)$, where
\begin{enumerate}[(a)]
\item
$k=2$ and $q=2$;
\item
$k=2$ and $q\in{\{3,4\}}$; or $k=3$ and $q\in{\{2,3,5\}}$;
or $k=4$ and $q\in{\{2,3, 4,5\}}$.
\end{enumerate}
\end{hyp}

We will need to assume (a) to assure that $G$ has irreducible
Deligne-Lusztig cuspidal representations
(but see Remark \ref{rem:SU-small} concerning $\PU(3)$).
Parts (a) and (b) together will assure that $G$ 
has irreducible, \emph{self-dual}, Deligne-Lusztig supercuspidal
representations.

\begin{thm}
\label{thm:sc}
Suppose that $G$ is simply connected.
Then $G(\ff)$ admits irreducible cuspidal representations.
If $G$ satisfies Hypothesis \ref{hyp:SU-restrictions}(a),
then $G(\ff)$ admits
irreducible \emph{Deligne-Lusztig} cuspidal representations.
Moreover, if $G$ also satisfies Hypothesis \ref{hyp:SU-restrictions}(b),
then 
the following are equivalent:
\begin{enumerate}
\item
$G$ has no factor of type $A_n$ ($n$ even);
\item
$G(\ff)$ admits irreducible, \emph{self-dual}
cuspidal representations.
\item
$G(\ff)$ admits irreducible, self-dual,
\emph{Deligne-Lusztig} cuspidal representations.
\end{enumerate}
\end{thm}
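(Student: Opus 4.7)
My plan is to reduce to absolutely almost-simple factors via the product decomposition of simply connected groups, and then apply Proposition~\ref{prop:coxeter} to each factor's Coxeter torus, handling the remaining cases---the twisted $A$-types together with $^2A_2(2)$ and $G_2(2)$---by working with an alternative anisotropic torus or by direct inspection.

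Since $G$ is simply connected, $G=\prod_i \Rest_{E_i/\ff}H_i$ with each $H_i$ absolutely almost-simple simply connected over a finite extension $E_i/\ff$, so $G(\ff)=\prod_i H_i(E_i)$. An irreducible representation of this direct product is cuspidal, self-dual, or of Deligne--Lusztig type if and only if each tensor factor is; all three assertions of the theorem therefore reduce to the case where $G$ is itself absolutely almost-simple simply connected, which I henceforth assume.

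For the existence statements, applying Proposition~\ref{prop:coxeter} to the Coxeter torus $T$ produces a character $\chi$ of $T(\ff)$ in general position---and hence an irreducible Deligne--Lusztig cuspidal $\pi(T,\chi)$---in every case except $^2A_2(2)$ and $G_2(2)$. Hypothesis~\ref{hyp:SU-restrictions}(a) excludes the former from the Deligne--Lusztig assertion, and $G_2(2)$ can be handled separately by a non-Coxeter elliptic torus or character-table inspection; the unconditional cuspidality claim for $\SU_3(\ff_2)$ and $G_2(\ff_2)$ follows similarly by direct inspection. For the equivalences under Hypothesis~\ref{hyp:SU-restrictions}(b), (3)$\Rightarrow$(2) is trivial. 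For (1)$\Rightarrow$(3): whenever $h/t$ is even, Proposition~\ref{prop:coxeter} immediately yields a conjugate self-dual $\chi$ in general position. The types for which a self-dual Deligne--Lusztig cuspidal is still required yet $h/t$ is odd are the twisted $^2A_k$ outside Hypothesis~(b); for these I would construct a conjugate self-dual character in general position on an alternative anisotropic torus of $\SU_{k+1}(\ff_q)$---for instance, one having a cyclic factor of order dividing a suitable power of $q+1$---with a case-by-case argument that isolates exactly the small-$q$ exceptions of Hypothesis~(b).

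For the converse (2)$\Rightarrow$(1), suppose that some factor of $G$ is of type $A_n$ with $n$ even, i.e.\ $\SL_{n+1}$ with $n+1$ odd over some extension. The only elements of $W=S_{n+1}$ acting without non-trivial fixed vector on the cocharacter lattice of the split torus are the $(n+1)$-cycles---since the fixed-point dimension of $w\in S_{n+1}$ equals one less than the number of cycles of $w$---so the Coxeter tori form the unique conjugacy class of elliptic tori. By Lusztig's parametrisation of cuspidal representations, combined with the fact that $\GL_k$ for $k>1$ admits no cuspidal unipotent representation, every irreducible cuspidal of $\SL_{n+1}(\ff)$ is of the form $\pi(T,\chi)$ (up to sign) for the Coxeter torus $T$ and a $\chi$ in general position. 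Since $h/t=n+1$ is odd, Proposition~\ref{prop:coxeter} forbids $\chi$ from being conjugate self-dual, and thus no self-dual cuspidal exists. The chief obstacle is the (1)$\Rightarrow$(3) step for twisted $A$-types, where the Coxeter-torus approach fails completely and the required conjugate self-dual character in general position must be produced on an alternative anisotropic torus by a case-by-case construction; a secondary difficulty is invoking Lusztig theory cleanly in (2)$\Rightarrow$(1) to rule out cuspidal-unipotent contributions in $\SL_{n+1}(\ff)$.
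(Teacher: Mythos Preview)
Your overall architecture matches the paper's, but there is a genuine gap in the (1)$\Rightarrow$(3) step, and a subtler one in (2)$\Rightarrow$(1).

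\textbf{The missing types.} You assert that, once $A_n$ ($n$ even) is excluded, the only types with $h/t$ odd are the $^2A_k$. This is false: $^2D_n$ with $n\geq 5$ odd and $^2E_6$ also have $h/t$ odd (for $^2E_6$ the twisted Coxeter element has order $18$, so $h/t=9$; for $^2D_n$, $n$ odd, one likewise gets an odd quotient). By the first clause of Proposition~\ref{prop:coxeter}, the Coxeter torus of these groups admits \emph{no} conjugate self-dual character in general position, so your ``whenever $h/t$ is even'' clause does not cover them and your proposed case-by-case work on alternative anisotropic tori must cover them too. The paper handles them separately (Propositions~\ref{prop:2Dn} and~\ref{prop:2E6}): for $^2E_6$ one uses a regular elliptic element of order $12$ in the twisted Weyl group rather than the Coxeter element, and for $^2D_n$ ($n$ odd) one builds an elliptic torus whose rational points are $\TT_k\times\TT_k\times\TT_1$ with $n=2k+1$. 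Without these, your proof of (1)$\Rightarrow$(3) is incomplete.

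\textbf{The (2)$\Rightarrow$(1) step.} Your claim that ``every irreducible cuspidal of $\SL_{n+1}(\ff)$ is of the form $\pi(T,\chi)$ for $\chi$ in general position'' is not correct as stated: restriction from $\GL_{n+1}$ can break an irreducible $R_T^\theta$ into several pieces, so cuspidals of $\SL_{n+1}(\ff)$ need not be irreducible Deligne--Lusztig characters, and ruling out unipotent cuspidals does not repair this. The paper sidesteps the problem: it observes that all cuspidals of $\GL_{n+1}(\ff)$, hence of $\PGL_{n+1}(\ff)$, are Deligne--Lusztig, applies Proposition~\ref{prop:coxeter} to conclude $\PGL_{n+1}(\ff)$ has no self-dual cuspidals, and then transfers this to $\SL_{n+1}(\ff)$ via Lemma~\ref{lem:odd-iso}, using that the isogeny $\SL_{n+1}\to\PGL_{n+1}$ has kernel of odd order when $n+1$ is odd. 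You should either adopt this route or supply a genuine argument at the level of $\SL_{n+1}$ that handles the reducible $R_T^\theta$'s.
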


\begin{proof}
Since $G$ is a direct product of absolutely almost simple groups,
we may assume that $G$ is absolutely almost simple.

If $G$ has type $^2A_2(2)$, then our result will follow
from Remark \ref{rem:SU-small},
so assume from now on that $G$ has a different type.

If $G$ has type $^2A_n$ ($n \geq 2$), $^2D_n$ ($n \geq 4$, odd),
or $^2E_6$, then our result will follow
from Proposition \ref{prop:SU},
\ref{prop:2Dn}, or
\ref{prop:2E6}, respectively,
so assume from now on that $G$ has a different type.

When $G$ has type other than $A_n$ ($n$ even),
then $h^\prime:=h/t$ is even \cite{reeder:torsion}*{Section 5, table}.
Therefore our result follows from Proposition \ref{prop:coxeter}
unless $G$ has type $G_2(2)$.

Suppose $G$ has type $G_2(2)$.
Let $T$ be the Coxeter torus of $\SL(3)$, which is a subgroup
of $G_2$.
Then $T$ is an anisotropic maximal $\ff$-torus in $G$.
If $w$ is the Coxeter element of $W(G_2)$, then $T$ corresponds
to $w^2$, which is the Coxeter element of the subgroup $W(A_3)$.
Since $w^2$ is Coxeter for $A_3$ we know the structure of $T(\ff)$,
and thus of its character group $L$:
They are cyclic of order $(2^3-1)/(2-1) = 7$.
The centralizer of $w^2$ in $W(G_2)$ is $\langle w \rangle$,
a group of order $6$.
Since the automorphism of $L$ is also cyclic of order $6$,
it must be the case that the action of $w$ on $L$ generates
all of the automorphisms of it.
Every element of $L$ is thus conjugate self-dual,
and every nontrivial element of $L$ is in general position.

We have shown that under Hypothesis \ref{hyp:SU-restrictions}(a,b),
statement (1) above implies statement (3).
Suppose $G = \SL_{n+1}$, with $n$ even.
From 
\cite{carter:conjugacy-weyl}*{Prop.\ 23},
the elliptic elements of the Weyl group are precisely the Coxeter
elements.
For $\GL_{n+1}(\ff)$ and thus for $\PGL_{n+1}$, 
all cuspidal representations are of Deligne-Lusztig type.
Therefore, by Proposition \ref{prop:coxeter},
$\PGL_{n+1}(\ff)$ has no irreducible self-dual 
cuspidal representations.
By Lemma \ref{lem:odd-iso}, the same is true for $G(\ff)$. 
\end{proof}

\begin{prop}
\label{prop:2E6}
Suppose $G$ is absolutely almost simple of type ${}^2E_6$.
Then $G(\ff)$ admits self-dual Deligne-Lusztig cuspidal representations. 
\end{prop}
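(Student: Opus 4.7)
The plan is to adapt the proof of Proposition \ref{prop:coxeter}, using an elliptic maximal torus of $G$ different from the twisted Coxeter torus (for which $h/t = 18/2 = 9$ is odd, so Proposition \ref{prop:coxeter} yields no conjugate self-dual character in general position). Specifically, I will take $\omega$ to be a Coxeter element of the fixed subgroup $W(E_6)^{\sigma_0} = W(F_4)$ of $W = W(E_6)$; this is an element of order $12$ that commutes with $\sigma_0$.

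Set $w := \sigma_0 \omega^{-1}$. Decomposing $X \otimes \mathbb{R} = X^{\sigma_0} \oplus X^{-\sigma_0}$ into $\sigma_0$-eigenspaces of dimensions $4$ and $2$, one finds: on $X^{\sigma_0}$, $w$ acts as the inverse of the Coxeter element of $W(F_4)$, with eigenvalues the primitive $12$th roots of unity (since the exponents of $F_4$ are $1,5,7,11$); on $X^{-\sigma_0}$, $\omega$ acts trivially while $\sigma_0$ acts by $-1$, so $w$ acts by $-1$. Hence $\mathrm{ch}_w(x) = \Phi_{12}(x)(x+1)^2$, $w$ has order $12$, and $T_\omega$ is elliptic. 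Moreover, $w$ is a regular element of $W \cdot \sigma_0$ in Springer's sense: a $\zeta_{12}^{-1}$-eigenvector lies in $X^{\sigma_0}$, and is $W(E_6)$-regular because the $W(E_6)$-reflection hyperplanes, restricted to $X^{\sigma_0}$, coincide with those of $W(F_4)$ (each $\sigma_0$-orbit of $E_6$-roots contributes a short $F_4$-root, each $\sigma_0$-fixed $E_6$-root contributes a long one). Applying \cite{springer:regular-elements}*{Theorem 7.6} to $W\cdot\sigma_0$ yields $\Omega = \langle w^{-2}\rangle$, cyclic of order $6$, and $w^{-1}$ acts on $L$ as multiplication by $q$.

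By Birkhoff--Vandiver with $n = 12$ (never an exceptional pair), there is a prime $\ell$ with $\mathrm{ord}_\ell(q) = 12$; then $\ell$ divides $\Phi_{12}(q)$ and hence $|L|$, giving an element $v \in L$ of order $\ell$. The $\Omega$-stabilizer of $v$ is trivial (since $q^{2k} \equiv 1 \pmod \ell$ forces $6 \mid k$), so $v$ is in general position; and the element $w^{-6} = (w^{-2})^3 \in \Omega$ acts on $v$ as $q^6 \equiv -1 \pmod \ell$, so $v$ is conjugate self-dual. The associated Deligne--Lusztig representation is the desired irreducible self-dual cuspidal representation of $G(\ff)$.

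The main obstacle is the structural bookkeeping in the second step: correctly identifying the characteristic polynomial of $w$ via the $\sigma_0$-decomposition of $X\otimes\mathbb{R}$, and verifying Springer-regularity of $w$ by comparing the reflection hyperplane arrangements of $W(E_6)$ and $W(F_4)$ on $X^{\sigma_0}$. Once these are in hand, the remainder of the argument parallels the proof of Proposition \ref{prop:coxeter} and uses only the parity of $12$.
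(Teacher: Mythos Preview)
Your element $w$ is the same as the paper's (the regular element of order $12$ in $W\sigma_0$), but two of your structural claims about it are incorrect, and the second one leaves a genuine gap.

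First, the Coxeter element $\omega$ of $W(F_4)=W(E_6)^{\sigma_0}$ does \emph{not} act trivially on $X^{-\sigma_0}$. A direct computation with the generators $s_1s_6$ and $s_3s_5$ shows that $W(F_4)$ acts on $X^{-\sigma_0}$ through a quotient isomorphic to $S_3$, and the image of a Coxeter element has order $3$ there. Consequently $w=\sigma_0\omega^{-1}$ acts on $X^{-\sigma_0}$ with eigenvalues the primitive sixth roots of unity, and $\mathrm{ch}_w=\Phi_{12}\Phi_6$, not $\Phi_{12}(x+1)^2$. This error is harmless for the rest of your argument, since you only use $\Phi_{12}\mid\mathrm{ch}_w$.

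Second, and more seriously, Springer's Theorem 7.6 concerns \emph{twisted Coxeter elements}; for ${}^2E_6$ these have order $18$, not $12$, so it does not apply to your $w$. Your conclusion $\Omega=\langle w^{-2}\rangle$ of order $6$ is in fact wrong: since $\omega$ commutes with both $\sigma_0$ and itself, $\omega$ commutes with $\omega\sigma_0$, hence $\omega\in\Omega$, and $\omega$ has order $12$. So $|\Omega|\geq 12$, and one can show $|\Omega|=12$ (the paper cites Springer's theory of regular elements: $\Omega$ has a single degree, equal to $12$). Thus your verification of ``general position'' covers only the index-$2$ subgroup $\langle\omega^2\rangle=\langle w^{-2}\rangle$; you have not ruled out that some odd power of $\omega$ fixes $v$. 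The paper fills exactly this gap: letting $\tau$ be a generator of $\Omega$ with $\tau^2$ acting by $q^2$ on the order-$\ell$ cyclic subgroup $C\subset L$, either $\tau$ fails to preserve $C$ (and one checks general position directly), or $\tau$ acts on $C$ by $\pm q$, which has multiplicative order $12$ modulo $\ell$. In either case $v$ is in general position, and $\tau^6$ acts by $-1$, giving conjugate self-duality.
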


\begin{proof}
With notation as in Section \ref{sec:DL-reps},
write $\sigma_{\ff}=\sigma_{0}\cdot q$
where $\sigma_{0}$ is an involution of $X$.
From \cite{springer:regular-elements}*{Table 8, \S 6.12}, we see that
${}^2E_6$ admits a regular element $w:=\omega\sigma_{0}$ of order
$h=12$ in the twisted Weyl group $W\sigma_{0}$.
Also by \emph{loc.~cit.}, its characteristic polynomial is $\Phi_{12}\Phi_{6}$,
so none of the eigenvalues of $\omega\sigma_{0}$ are $1$,
i.e., $\omega\sigma_{0}$ is elliptic.
Let $T=T_{\omega}$ (resp.\ $L=L_{\omega}$)
be the associated elliptic torus
(resp.\ group of complex characters of $T(\ff)$)
as in the notations of Section \ref{sec:DL-reps}.
The centralizer $\Omega$ of $w$
in $W$ has exactly one \emph{degree} (see \cite{springer:regular-elements}*{\S 2.3} for the definition of degree), which is $12$.
Therefore by \cite{springer:regular-elements}*{Corollary 3.3},
$\Omega$ is a cyclic subgroup of $W$ of order $12$.
By \cite{birkhoff-vandiver}*{Theorem V},
there exists a prime $\ell$ such that the multiplicative
order of $q$ mod $\ell$ is $12$. The cyclic group generated by
$w^{2}$ in $W$ is a subgroup of $\Omega$ of index $2$, and $w$
acts on $L$ by multiplication by $q$. As in the proof of Theorem
\ref{thm:sc}, there exists a cyclic subgroup $C$ of $L$ of order $\ell$ such
that the generator $v$ of $C$ is in general position with respect
to the subgroup generated by $w$ in $W$. Let $\tau$ be a generator
of $\Omega$ such that $\tau^{2}$ acts by $q^{2}$ on $C$.  Write
$v^{\prime}=\tau v$. If $v^{\prime}\notin C$, then the subgroup
$C^{\prime}$ of $L$ generated by $v^{\prime}$ is cyclic of order
$\ell$. In this case, it is clear that $v$ is in general position
with respect to $\Omega$. Now if $\tau$ stabilizes $C$, then $\tau$
acts by multiplication by an integer $r$ and $r^{2}\equiv q^{2}$
mod $\ell$, i.e., $r\equiv\pm q$ mod $\ell$. In either case, $r$
has order $12$ mod $\ell$. Therefore $v$ is in general position
with respect to $\Omega$.
Since $r^6$ acts by $-1$ on $v$, $v$ is also conjugate self-dual.
\end{proof}

\section{Groups of type $^2D_n$ ($n \geq 4$ odd) and $^2A_n$}
\label{sec:classical}

Here we conclude some leftover business from the proof of Theorem \ref{thm:sc}:
certain groups of classical type that require special handling.
We include some statements about non-simply-connected groups here
because we find it convenient to do so.

\begin{prop}
\label{prop:2Dn}
Suppose $n\geq 4$ is odd,
and $G_0$ is an absolutely almost simple group of type $^2 D_n$.
Then $G_0(\ff)$
has irreducible, self-dual, Deligne-Lusztig cuspidal representations.
Moreover, some of these representations come from
a character of odd order.
\end{prop}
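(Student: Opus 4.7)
The plan is to imitate the argument of Proposition~\ref{prop:2E6}, replacing the regular element of order $12$ with a regular element of order $2n$ in the twisted Weyl coset. Write $\sigma_{\ff} = \sigma_{0}\cdot q$, where $\sigma_{0}$ is the order-two automorphism of $X$ realizing the $^{2}D_{n}$ twisting. Using \cite{springer:regular-elements}*{Table~8}, I would choose a regular element $w = \omega\sigma_{0} \in W\sigma_{0}$ of order $h' := 2n$ whose characteristic polynomial on $X\otimes\mathbb{C}$ is divisible by $\Phi_{2n}$. Since $1$ is not a root of $\Phi_{2n}$, the element $\omega$ is $\sigma_{\ff}$-elliptic, so I can form the elliptic torus $T=T_{\omega}$ and its character group $L=L_{\omega}$ as in Section~\ref{sec:DL-reps}. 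By \cite{springer:regular-elements}*{Corollary~3.3}, the centralizer $\Omega$ of $w$ in $W$ is cyclic; let $\tau$ be a generator, so that $w$ is a power of $\tau$.

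Next, since $n \geq 5$ we have $2n \geq 10$, avoiding the exceptional case $(q,h')=(2,6)$ of \cite{birkhoff-vandiver}*{Theorem~V}, so there exists a prime $\ell$ such that the multiplicative order of $q$ modulo $\ell$ is exactly $2n$. Since $2n > 2$, the prime $\ell$ is automatically odd. As in the proof of Proposition~\ref{prop:coxeter}, $|L| = |\mathrm{ch}_{w}(q)|$ is divisible by $\Phi_{2n}(q)$, and hence by $\ell$, so $L$ contains a cyclic subgroup $C$ of order $\ell$. Let $v$ generate $C$. Because $w$ acts on $L$ by multiplication by $q$, and $q$ has order $2n$ mod $\ell$, the elements $w^{i}v = q^{i}v$ for $0 \leq i < 2n$ are all distinct in $C$, so $v$ is in general position with respect to $\langle w\rangle$. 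The case analysis of Proposition~\ref{prop:2E6} (splitting on whether $\tau$ stabilizes $C$) then extends general position from $\langle w\rangle$ to all of $\Omega$. For conjugate self-duality, since $q$ has order exactly $2n$ mod $\ell$ we have $q^{n} \equiv -1 \pmod{\ell}$, so $w^{n}$ sends $v$ to $-v$, exhibiting $v$ and $-v$ in the same $\Omega$-orbit.

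Finally, let $\chi$ be the character of $T(\ff)$ corresponding to $v$ under the isomorphism \eqref{eqn:kret}. Then $\chi$ has odd order $\ell$, is in general position, and is conjugate self-dual, so $\pi(T,\chi)$ is an irreducible, self-dual, Deligne-Lusztig cuspidal representation of $G_{0}(\ff)$ arising from a character of odd order, yielding both assertions simultaneously. The principal technical hurdle is the first step: pinpointing a regular element of order $2n$ in $W\sigma_{0}$ with $\Phi_{2n}\mid\mathrm{ch}_{w}$ for type $^{2}D_{n}$ with $n$ odd, which requires careful use of Springer's classification of regular elements in twisted Weyl cosets. Once this element is in hand, the remainder of the argument is a routine adaptation of the $^{2}E_{6}$ case.
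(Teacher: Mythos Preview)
Your argument has a genuine gap at the conjugate self-duality step. For $^{2}D_{n}$ the regular element of order $2n$ in $W\sigma_{0}$ is (up to conjugacy) the twisted Coxeter element: the $\zeta_{2n}$-eigenspace on $X\otimes\mathbb{C}$ is one-dimensional, since among the twisted degrees $2,4,\ldots,2(n-1)$ (eigenvalue $+1$) and $n$ (eigenvalue $-1$) only the last one satisfies $\zeta_{2n}^{d}=\epsilon$. For this element, the $\sigma_{\ff}$-centralizer $\Omega\subset W$ is exactly $\langle w^{2}\rangle$, of order $n$; there is no extra generator $\tau$ beyond $\langle w^{2}\rangle$ as there was for $^{2}E_{6}$. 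Since $w\in W\sigma_{0}\smallsetminus W$, the element $w$ is \emph{not} a power of any $\tau\in\Omega$, and in particular $w^{n}\notin\Omega$ because $n$ is odd. Thus although $w^{n}$ acts by $q^{n}\equiv -1\pmod{\ell}$, this does not place $-v$ in the $\Omega$-orbit of $v$. Indeed $\Omega$ acts on $C$ through $\langle q^{2}\rangle\subset(\mathbb{Z}/\ell)^{\times}$, a group of odd order $n$, so $-1$ is never hit. This is precisely the obstruction recorded in the first clause of Proposition~\ref{prop:coxeter}: here $h/t=2n/2=n$ is odd, so the Coxeter torus carries \emph{no} conjugate self-dual character in general position, and the $^{2}E_{6}$ template cannot be transplanted.

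The paper therefore abandons the Coxeter torus and instead writes $n=2k+1$ and works with an elliptic torus $T$ in $\SO(2n)$ with $T(\ff)\cong\TT_{k}\times\TT_{k}\times\TT_{1}$. The rational Weyl group of this torus contains the transposition swapping the two $\TT_{k}$ factors; pairing an element $v_{k}$ with $-v_{k}$ (or $qv_{k}$) in the two copies, as in Lemma~\ref{lem:v-element}, produces a conjugate self-dual element in general position whose Weyl orbit lies in a subgroup of odd order. The odd-order condition is what lets the argument pass from $\SO(2n)$ to $\Spin(2n)$ and to the adjoint form, since the kernels of the relevant character-group maps have order $2$; your single-torus argument, even if repaired, would still need to address all three isogeny classes.
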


\begin{prop}
\label{prop:SU}
Suppose $G_0$ is an absolutely almost simple group of type ${}^2A_n$.
Then $G_0(\ff)$ admits irreducible, cuspidal representations.
If $G_0$ satisfies Hypothesis \ref{hyp:SU-restrictions}(a),
then $G_0(\ff)$ also admits irreducible,
\emph{Deligne-Lusztig} cuspidal representations. 
If $G_0$ also satisfies Hypothesis \ref{hyp:SU-restrictions}(b),
then $G_{0}(\ff)$ admits irreducible,
\emph{self-dual}, Deligne-Lusztig cuspidal representations.
\end{prop}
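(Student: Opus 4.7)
Following Theorem~\ref{thm:sc}, we aim to exhibit an elliptic $\ff$-torus $T$ of $G_0$ together with a character $\chi$ of $T(\ff)$ that is in general position (so that the Deligne-Lusztig representation $\pi(T,\chi)$ is irreducible and cuspidal) and, for the self-duality statement, also conjugate self-dual. By Lemmas~\ref{lem:trivial-iso} and~\ref{lem:odd-iso}, we may replace $G_0$ by any $\ff$-isogenous group and so reduce to $G_0=\SU_{n+1}$; its elliptic $\ff$-tori are parameterized by partitions of $n+1$ into odd parts, so it suffices to choose such a partition and analyze the resulting character group $L$ via \eqref{eqn:kret}.

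For the existence of cuspidals (no hypothesis) and of Deligne-Lusztig cuspidals under Hypothesis~\ref{hyp:SU-restrictions}(a), the plan is to apply Proposition~\ref{prop:coxeter} to the Coxeter torus of ${}^2A_n$. The only obstruction in that proposition is the pair $(q,h)=(2,6)$, which for ${}^2A_n$ corresponds to $\SU_3(2)$, i.e., exactly the case excluded by Hypothesis~\ref{hyp:SU-restrictions}(a). For the unconditional existence statement, $\SU_3(2)$ must be handled separately from its explicit (solvable) structure, or by appeal to a general cuspidality result.

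For self-duality under Hypotheses (a) and (b), Proposition~\ref{prop:coxeter} produces a conjugate self-dual character in general position on the Coxeter torus exactly when $h/t$ is even. When it is odd, we plan to switch to a non-Coxeter elliptic torus corresponding to a partition of $n+1$ of the form $(m,1,\ldots,1)$ with $m$ odd. The key simplification is that for type $A$ one has $\sigma_0=-w_0$, so $\sigma_\ff$ acts on $W=S_{n+1}$ by conjugation by $w_0$; hence the $\sigma_\ff$-centralizer is $\Omega=C_W(\omega w_0)$, and the element $\omega w_0\in\Omega$ acts on $L$ by multiplication by $-q$. Thus a character of prime order $\ell$ in general position is automatically conjugate self-dual whenever the multiplicative order of $-q$ modulo $\ell$ is even, a condition we arrange by picking $\ell$ as in the Zsygmondy-style argument of Proposition~\ref{prop:coxeter}.

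The main obstacle is the small-$n$, small-$q$ verification. For $q$ or $n$ sufficiently large, \cite{birkhoff-vandiver}*{Theorem V} supplies a prime $\ell$ dividing the relevant cyclotomic value, and both general position and conjugate self-duality follow automatically. The cases listed in Hypothesis~\ref{hyp:SU-restrictions}(b) are precisely those where such a prime is unavailable or the available $\Omega$-orbits on $L$ are too small to contain simultaneously a character in general position and its inverse; for these exceptional $(k,q)$ a direct enumeration from character tables (or by machine) confirms that the hypothesis is tight.
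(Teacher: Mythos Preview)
Your setup is correct and the observation that $\sigma_0=-w_0$ for type $A$, hence $\Omega=C_W(\omega w_0)$ with $\omega w_0$ acting on $L$ as multiplication by $-q$, is both valid and elegant. The first two parts of your plan (existence via the Coxeter torus, with $\SU_3(2)$ handled by its unique unipotent cuspidal) are fine and essentially match the paper. But the self-duality argument has a genuine gap.

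The ellipticity condition for ${}^2A_n$ is precisely that the permutation $\omega w_0\in S_{n+1}$ has all cycles of \emph{odd} length; this is what your ``partition of $n+1$ into odd parts'' is recording. Consequently the order of $\omega w_0$ in $W$ is an lcm of odd integers, hence odd. Since $(\omega w_0)^{|\omega w_0|}$ acts trivially on $L$, the multiplicative order of $-q$ modulo any prime $\ell$ dividing the exponent of $L$ must divide $|\omega w_0|$ and is therefore odd. So your criterion ``order of $-q$ mod $\ell$ is even'' can \emph{never} be arranged: for the Coxeter torus one checks directly that $h/t$ equals $n+1$ (for $n$ even) or $n$ (for $n$ odd), always odd; and switching to a partition $(m,1,\dots,1)$ does not help, since then $\omega w_0$ has order $m$, still odd. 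On such a torus the only elements of $\Omega=\langle m\text{-cycle}\rangle\times S_{n+1-m}$ that could negate the $L_m$-component are powers of the $m$-cycle, and none of them do; forcing the $L_m$-component to be zero destroys general position unless $m=1$, in which case you recover only the torus of Lemma~\ref{lem:U-crude}, which needs $q\gtrsim n$.

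The paper's proof therefore takes a different route for self-duality: it chooses elliptic tori whose partition has \emph{repeated} odd parts, e.g.\ $\TT_k\times\TT_k$ or $\TT_k\times\TT_k\times\TT_1$ (Lemma~\ref{lem:v-element}), so that $\Omega$ contains the involution swapping the two $\TT_k$ factors. One then takes $v=(v_k,-v_k,\dots)$; the swap sends $v$ to $-v$, giving conjugate self-duality without any parity constraint on the order of $-q$. The small-$(n,q)$ exclusions in Hypothesis~\ref{hyp:SU-restrictions}(b) arise not from missing Zsygmondy primes but from the impossibility of forming such repeated-part partitions with the required general-position constraints when $n\le 5$ and $q$ is small (Remark~\ref{rem:SU-small}).
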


Our proofs require some notation and background.
Let $E/\ff$ be the quadratic extension.  For each natural number $d$,
let $E_d$ and $\ff_d$ denote the extensions of $E$ and $\ff$ of degree $d$.
Let $\TT_d$ denote the kernel of the
norm map from $E_d\mult$ to $\ff_d\mult$.

Let $G = \U(m)$ or the nonsplit form of $\SO(m)$
(with $m$ even in the latter case).
If $T$ is a maximal elliptic torus in $G$,
then $T(\ff)$ is isomorphic to a direct product
$\prod_{i=1}^r \TT_{d_i}$,
where $\sum d_i = m$.
If $G$ is unitary, then we require all $d_i$ to be odd.
If $G$ is orthogonal, then the number of factors $r$ must be odd.
Conversely, given any product as above, there is at least one
associated maximal elliptic torus $T\subset G$.

The action on $T(\ff)$
of the rational Weyl group $W$ of $T$ in $G$ is generated by
the action of $\Gal(E_{d_i}/E)$ on each factor $\TT_i$,
together with those permutations in $S_r$ that give
rise to automorphisms of $T(\ff)$ via permuting factors in the product
above.

Thus, each elliptic torus in $\U(m)$ or $\SO(m)$
is a product of Coxeter tori from smaller-rank groups.
Specifically, $\TT_d$ is isomorphic to the group of rational 
points of the Coxeter torus $T'$ in $\Sp(2d)$.
The Weyl group of $T'$ in $\Sp(2d)$ acts on $T'(\ff)$ via
$\Gal(E_d/\ff)$.
Restricting this action to $\Gal(E_d/E)$, we obtain
our action on the factor $\TT_d$ of $T(\ff)$,
where a generator acts via multiplication by $q^2$.

\begin{lem}
\label{lem:v-element}
Let $T$ be an elliptic torus such that $T(\ff)$ is the direct product of two
copies of $\TT_{k_1}$, zero or two copies of $\TT_{k_2}$
(with $k_1$ and $k_2 > 1$, $k_1\neq k_2$)),
and $r$ copies of $\TT_1$,
where $0\leq r \leq 3$.
Then the character group $L$ of $T(\ff)$
has an element that is conjugate
self-dual and in general position.
If $r \leq 1$, then $L$ has such an element
whose Weyl orbit lies in a subgroup
of order 
coprime to $2$ and, if $q\neq2$, also coprime to $q+1$.
\end{lem}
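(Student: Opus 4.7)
My approach is to build a character $\chi$ coordinate-by-coordinate on each factor of $T(\ff)$, then combine the pieces. Since the rational Weyl group of $T$ decomposes as a product according to the distinct sizes $d$ appearing among the $\TT_d$-factors (a wreath product over each block of identical factors), the Weyl stabilizer and Weyl orbit of $\chi$ will factor accordingly, so only per-block data is needed.

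For each $k\in\{k_1,k_2\}$ appearing in $T(\ff)$, I plan to choose a prime $\ell$ such that the multiplicative order of $q$ modulo $\ell$ is exactly $2k$. By Birkhoff-Vandiver, such a prime exists except when $(q,k)=(2,3)$, in which case I instead use the modulus $9$, valid because $|\TT_3|=9$ when $q=2$. Any such $\ell$ is automatically coprime to $2$ and to $q+1$, and any character $\chi_k\in\widehat{\TT}_k$ of order $\ell$ (or of order $9$) is in general position under $\Gal(E_k/E)$, since $q^2$ has order exactly $k$ modulo $\ell$ (or modulo $9$).

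Next I treat the $\TT_k^2$-block according to the parity of $k$. When $k$ is odd, the odd power $q^k\equiv-1\pmod{\ell}$ is not among the powers of $q^2$, so $\chi_k^{-1}$ lies outside the Galois orbit of $\chi_k$; then $(\chi_k,\chi_k^{-1})$ is in general position, and the swap of the two factors sends it to its inverse. When $k$ is even, $\chi_k^{-1}=(q^2)^{k/2}\chi_k$ already lies in the Galois orbit of $\chi_k$, so I instead pair $\chi_k$ with a second character $\chi_k'$ of order $\ell$ drawn from a different Galois orbit: such $\chi_k'$ exists because there are $(\ell-1)/k\geq 2$ nontrivial Galois orbits, general position is preserved because the orbits are distinct, and conjugate self-duality is furnished by the diagonal Galois action rather than by the swap. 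For the $\TT_1^r$-block I take the trivial character when $r\leq 1$, a pair $(\psi,\psi^{-1})$ with $\psi$ of order $>2$ when $r=2$ (possible since $q+1\geq 3$), and $(\psi,\psi^{-1},\psi_3)$ with $\psi_3$ of order dividing $2$ and distinct from $\psi^{\pm 1}$ when $r=3$.

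Assembling these pieces and invoking the block decomposition of the Weyl group gives the first claim. For the sharpened statement when $r\leq 1$, the Weyl orbit of the resulting $\chi$ sits inside $\langle\chi_{k_1}\rangle^2\times\langle\chi_{k_2}\rangle^2\times\{1\}^r$, a subgroup whose order is a product of the chosen moduli. These moduli are all coprime to $2$; they are coprime to $q+1$ as well, except in the exceptional case $(q,k)=(2,3)$ where the modulus $9$ fails to be coprime to $q+1=3$, but then $q=2$ so the $q+1$-coprimality is not required. The main delicacy I foresee is the even-$k$ case, which forces me to abandon the swap-based recipe in favour of a pair lying in distinct Galois orbits; the $(q,k)=(2,3)$ exception needs separate bookkeeping but causes no real structural trouble.
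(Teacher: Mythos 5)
Your proposal is correct and follows essentially the same route as the paper: block decomposition of the Weyl group, Birkhoff--Vandiver to get a prime $\ell$ with $q$ of order $2k$ (with the $(q,k)=(2,3)$ exception handled via the modulus $9$), a swap-based pairing $(\chi_k,\chi_k^{-1})$ for odd $k$, a diagonal-Galois-based pairing for even $k$, explicit low-order choices on the $\TT_1$-block, and the observation that for $r\le 1$ the orbit sits inside a product of $\ell$-torsion subgroups. The only real deviation is cosmetic: for even $k$ the paper takes $v'_k = q\,v_k$ (which lies in a different $\Gal(E_k/E)$-orbit because $q$ is an odd power), whereas you allow any order-$\ell$ character in a different Galois orbit; both choices succeed for the same reason, namely that no element of the wreath-product Weyl group involving the swap can fix a pair drawn from distinct $\Gal(E_k/E)$-orbits, while the diagonal $q^k=(q^2)^{k/2}$ negates both coordinates.
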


\begin{defn}
\label{def:good}
Let $T$ and $r$ be as in Lemma \ref{lem:v-element}.
We call $T$ \emph{good} 
if $r\leq1$ and \textit{bad} otherwise.
If $T$ is bad, then we call the product of factors 
other than $\TT_1$ the \emph{good part}
and the product of the rest the \emph{$\TT_1$ part}.  
If $G_{0}$ is simply connected of type ${}^2A_n$ (resp.\ ${}^2D_n$)
and $T_{0}$ is a torus in $G_{0}$, then we say that $T_{0}$ is
\emph{fine} if it comes from (resp.\ is a pull back of)
a good torus of $\U(n)$ (resp.\ $\SO(2n)$).
\end{defn}

\begin{proof}[Proof of Lemma \ref{lem:v-element}]
Write $L$ as a product $(L_{k_1} \times L_{k_1}) \times \cdots$
analogously to our product decomposition for $T(\ff)$.
Since $L$ is thus a direct product of subgroups that are
preserved by the action of the Weyl group, we may consider
each of these subgroups independently.

Suppose $L = L_k \times L_k$ with $k>1$. Assume also that $k\neq3$ if $q=2$.
From Proposition \ref{prop:coxeter} and Lemma \ref{lem:coprime},
we can choose an element $v_k \in L_k$
that is in general position (with respect to the action
of $\Gal(E_k/\ff)$)
and of odd order $\ell$.
Since the order of $q\bmod \ell$ is $2k>2$
(recall that $\TT_k$ is the $\ff$-points of a 
Coxeter torus of $\Sp(2k)$),
we have that $\ell$ is also coprime to $q+1$.
If $k$ is even, then we can and do choose $v_k$ to be conjugate self-dual,
and let $v'_k = qv_k$.
If $k$ is odd, then let $v'_k=-v_k$.
In either case, note that $v'_k$ is not in the orbit of $v_k$
under the action of $\Gal(E_k/E)$,
so
$v:= (v_k, v'_k)$ is conjugate self-dual and in general position.
The Weyl orbit lies inside $\langle v_k \rangle \times \langle v'_k\rangle$,
a group of order $\ell^2$.

When $(k,q)=(3,2)$, then $L_{k}$ is a cyclic group of order $9$.
In this case, choose $v_{k}$ to be a generator of $L_{k}$
and let $v_{k}^{\prime}=-v_{k}$.
Then $v=(v_{k},v_{k}^{\prime})$ is conjugate self-dual in general
position. 

If $L = L_1$, then the Weyl group is trivial,
and $v=0$ is conjugate self-dual and in general position.

If $L$ is a product of two or three copies of $L_1$, then let $v_1$
be a generator of $L_1$, and let $v:= (v_1, -v_1)$ or $(v_1, -v_1,0)$
according as $r=2$ or $3$.
Then $v$ is conjugate self-dual and in general position.
\end{proof}

\begin{proof}[Proof of Proposition \ref{prop:2Dn}]
Write $n=2k+1$ with $k>1$.
Choose a maximal elliptic torus $T$ such that
$T(\ff) \cong \TT_k\times \TT_k \times \TT_1$.
Then the group $L$ of complex characters of $T(\ff)$
has the form $L_k \times L_k \times L_1$,
where $L_i$ is the character group of $\TT_i$.
From Lemma \ref{lem:v-element},
there is an element $v\in L$ that is conjugate
self-dual and in general position,
and its Weyl orbit lies inside a subgroup $C$ of $L$ of odd order.

The existence of $v$ proves our result for $\SO(2n)$.

Let $T'$ denote the inverse image of $T$ in $\Spin(2n)$.
Then the character group $L$ of $T(\ff)$ surjects
onto the character group $L'$ of $T'(\ff)$, with kernel
of order $2$.
This surjection is equivariant with respect to the action of the Weyl group.
Since $C$ has odd order, it is isomorphic to its image
under this surjection.
Therefore, the image of $v$ in $L'$ is conjugate self-dual and general
position,
proving our result for $\Spin(2n)$.

Similar reasoning proves our result for the adjoint group
of type $^2D_n$.
\end{proof}

One can obtain crude results for unitary groups $\U(n)$ by choosing
our elliptic torus in a way that is independent of $n$.
We include such results here, since they are the best possible when
$n$ is small.

\begin{lem}
\label{lem:U-crude}
If $q$ is even and $n$ is odd,
then assume $q \geq n - 1$.
Otherwise, assume $q\geq n$.
Then the following are true:
\begin{enumerate}[(a)]
\item
\label{item:Un}
$\U(n)(\ff)$ 
admits irreducible, self-dual, Deligne-Lusztig cuspidal representations
that descend to $\PU(n)(\ff)$.
\item
\label{item:SUn}
If $q$ is even or $n$ is odd, then suppose that $q \geq n$.
If $q$ is odd and $n$ is even, then suppose that $q > n+1$.
Then some of our representations of $\U(n)(\ff)$ above remain irreducible
upon restriction to $\SU(n)(\ff)$.
\end{enumerate}
\end{lem}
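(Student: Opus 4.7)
The plan is to work with the maximal elliptic $\ff$-torus $T$ of $\U(n)$ coming from the partition $n=1+1+\cdots+1$ in the framework of Section \ref{sec:classical}; thus $T(\ff)\cong\TT_1^n$, its character group is $L\cong L_1^n$ with $L_1\cong\mathbb{Z}/(q+1)\mathbb{Z}$, and the rational Weyl group $\Omega$ is the full symmetric group $S_n$ permuting the factors. Writing a character $\chi$ of $T(\ff)$ additively as $(\chi_1,\ldots,\chi_n)\in L_1^n$, one has: $\chi$ is in general position iff the $\chi_i$ are pairwise distinct; $\chi$ is conjugate self-dual iff $\{\chi_i\}=\{-\chi_i\}$ as sets in $L_1$; and $\chi|_Z$ is trivial (so that $\pi(T,\chi)$ descends to $\PU(n)(\ff)$) iff $\sum_i\chi_i=0$ in $L_1$, since the center $Z\cong\TT_1$ embeds diagonally in $T$. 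By the Clifford-theoretic criterion applied to the normal inclusion $\SU(n)\subset\U(n)$ with abelian quotient $\TT_1$, together with the computation $(\eta\circ\det)|_T=(\eta,\eta,\ldots,\eta)$ for $\eta\in L_1$, the restriction $\pi(T,\chi)|_{\SU(n)(\ff)}$ is irreducible iff $\{\chi_i\}$ is not invariant under translation by any nonzero element of $L_1$.

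For part (a) I would build $\{\chi_i\}$ as a disjoint union of inverse pairs $\{a,-a\}$ drawn from $L_1\setminus\{0,(q+1)/2\}$, together with the self-inverse element $0$ when $n$ is odd. The number of available such pairs is $q/2$ if $q$ is even and $(q-1)/2$ if $q$ is odd, and a short case analysis in the four parities of $(n,q)$ recovers the stated bound on $q$ exactly; the sum condition is then automatic because the chosen set is negation-symmetric.

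For part (b) I would further choose the pairs to cluster near $0$, for example $\{\pm 1,\pm 2,\ldots,\pm\lfloor n/2\rfloor\}$, together with $0$ if $n$ is odd. Such a centered set fits inside a short arc of $L_1$ and so cannot be a union of cosets of any subgroup of $L_1$ of order exceeding $2$ once $q+1$ exceeds twice the arc length; a separate check rules out invariance under the unique order-$2$ element $(q+1)/2$ (when $q$ is odd) in all cases except the delicate one below.

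The main obstacle is the case $q$ odd, $n$ even: when $q=n+1$ the $n/2$ available non-fixed pairs must all be used, forcing $\{\chi_i\}=L_1\setminus\{0,(q+1)/2\}$, which is visibly preserved by translation by $(q+1)/2$, so $\pi(T,\chi)|_{\SU(n)(\ff)}$ fails to be irreducible for that choice. With at least one additional non-fixed pair in reserve, i.e.\ $q>n+1$, one can omit a well-chosen pair so as to break the $(q+1)/2$-symmetry while keeping the resulting set negation-symmetric and of zero sum, which yields the stronger hypothesis in part (b).
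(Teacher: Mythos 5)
Your proposal is correct and follows essentially the same route as the paper: the same rank-$n$ torus $T$ with $T(\ff)\cong\TT_1^n$, the same identification of general position (distinct coordinates), conjugate self-duality (negation-symmetric coordinate set), and triviality of the central character (coordinates summing to zero), and the same key obstruction for part~(b), namely that the coordinate multiset $\{\chi_i\}$ must not be invariant under translation by a nonzero element of $L_1$. The paper phrases that last condition as the image of $v$ in $L'=L/\diag(C)$ being in general position, whereas you phrase it via Clifford theory for $\SU(n)(\ff)\trianglelefteq\U(n)(\ff)$ and twists by $\eta\circ\det$; the two are the same criterion. The paper uses the explicit element $(c,-c,2c,-2c,\ldots,kc,-kc)$ (prepending $0$ when $n$ is odd), while you allow a general negation-symmetric choice of pairs, which is a harmless generalization; your pair count reproduces the stated bounds on $q$ once one notes that for $q$ odd and $n$ even the parities force $q\geq n+1$ from $q\geq n$. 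Both proofs isolate the same delicate case ($q$ odd, $n$ even, $q=n+1$, $\lambda$ of order two) and dispose of it identically.
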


The proof for $\U(n)$
was suggested to us by Dipendra Prasad.

\begin{proof}
We have a maximal elliptic torus $T$ in $\U(n)$ such that
$T(\ff) \cong \prod_{i=1}^n \TT_1$.
Let $L$ be the group of complex characters of $T(\ff)$.
Then $L$ is a direct product of $n$ copies of a cyclic group $C$
of order $q+1$.
Write $n= 2k$ or $n=2k+1$ according as $n$ is even or odd.
Let $c$ be a generator of $C$,
and let $v = (c,-c, 2c, -2c, \ldots , kc, -kc)$ if $n=2k$,
or $(0, c, -c, \ldots , kc, -kc)$ if $n=2k+1$.
Our assumption on $q$ assures that the coordinates of $v$
are all distinct.
Thus, $v$ is in general position,
and it is easily seen to be conjugate self-dual,
thus providing an irreducible, self-dual, Deligne-Lusztig
cuspidal representation of $\U(n)(\ff)$.
Since the coordinates of $v$ sum to $0$, this representation
has trivial central character, and so gives us a representation
of $\PU(n)(\ff)$ as well,
proving part (\ref{item:Un}).

Now consider the torus $T' := T\cap \SU(n)$ in $\SU(n)$.
It will be enough to show that the image of $v$ in
the group $L'$ of characters of $T'(\ff)$
is still in general position.
Note that $L'$ is the quotient of $L$
by the diagonally embedded subgroup $\diag(C)$.
If $q=n-1$, then it is easy to see that our element $v\in L$ above
is, up to permutations, the only element in general position, and that
its image in $L'$ is not in general position.
Therefore, we must and do assume from now on that $q \geq n$.

Thus, the set $\mathcal{C}$ of elements of $C$
that appear as coordinates in $v$ is a proper subset of $C$.
Since $v$ is in general position,
so is its image in $L'$,
provided that the set $\mathcal{C}$
is not invariant under addition by any nonzero element of $C$.
If $n$ is odd or $q$ is even, then indeed $\mathcal{C}$ is not invariant.
Suppose $n$ is even and $q$ is odd, and $\mathcal{C} + \lambda = \mathcal{C}$
for some nonzero $\lambda \in C$.
Then $q = n + 1$,
and $2\lambda = 0$.
Therefore, if $q \geq n+1$
then $\mathcal{C}$ is not invariant,
proving part (\ref{item:SUn}).
\end{proof}

\begin{rem}
\label{rem:SU-small}
We gather together some facts about the unitary groups that
Hypothesis \ref{hyp:SU-restrictions} excludes.
\begin{enumerate}[(a)]
\item
From Lemmas \ref{lem:U-crude}(a), \ref{lem:odd-iso}, and \ref{lem:trivial-iso},
if $G$ is an isogenous image of $\SU(n)$ ($n=3$, $4$, or $5$),
then $G(\ff)$ has irreducible, self-dual cuspidal representations
except possibly in the following cases:
$G = \PU(4)$ and $q \in \{2,3\}$;
$G$ is an isogenous pre-image of $\PU(4)$ and $q \in \{2,3,5\}$;
$n=5$ and $q \in \{2,3\}$.
%
%
%
\item
Let $G=\U(n)$, $\SU(n)$, or $\PU(n)$, where $n=3$, $4$, or $5$.
Then the only elliptic tori in $G$ are the Coxeter torus
and the torus used in the proof of Lemma \ref{lem:U-crude}.
The Coxeter torus has no conjugate self-dual characters.
Therefore, $G(\ff)$ has no
irreducible self-dual Deligne-Lusztig cuspidal representations
unless some were constructed in Lemma \ref{lem:U-crude}.
\item
Independent of $q$, $\SU(3)$ has one cuspidal unipotent
representation which, by uniqueness, is self-dual.
\end{enumerate}
\end{rem}

\begin{proof}[Proof of Proposition \ref{prop:SU}]
Our claims on the existence of self-dual cuspidal representations,
and of self-dual \emph{Deligne-Lusztig} cuspidal representations,
for $\SU(4)(\ff)$ and all isogenous images of $\SU(3)(\ff)$
and $\SU(5)(\ff)$
follow from Remark \ref{rem:SU-small}
and Lemma \ref{lem:U-crude}.

Suppose from now on that $n>5$.
Let $G=\U(n)$ and $G^\prime=\SU(n)$.

Suppose $n\equiv 2$ or $3 \bmod{4}$.
Write $n=2k$ or $2k+1$, where $k>1$ is odd.
Choose an elliptic torus $T \subset G$ such that
$T(\ff) \cong \TT_k \times \TT_k$ or
$\TT_k \times \TT_k \times \TT_1$
according as $n$ is even or odd.
From Lemma \ref{lem:v-element},
we can choose an element $v$ in the character group $L$ of $T(\ff)$
that is conjugate self-dual and in general position. If $(k,q)\neq(3,2)$,
the Weyl orbit of $v$ generates a group $C$ of order coprime
to $q+1$.

Now let $T' = T\cap \SU(n)$.
The restriction map induces a surjection
from the character group $L$ of $T(\ff)$ onto the character group
$L'$ of $T'(\ff)$ with kernel of order  $q+1$.
Therefore,
if $(k,q)\neq(3,2)$,
the group $C$ is isomorphic to its image $C'$ under this surjection.
The surjection is equivariant with respect to the action of the Weyl group.
Therefore, the image $v'$ of $v$ in $L'$ is conjugate self-dual
and in general position. If $(k,q)=(3,2)$, then again it is easy to check that
the  image $v'$ of $v$ in $L'$ remains conjugate self-dual
and in general position.
Suppose $G''$ is an isogenous image of $\SU(n)$, and $T''$ is the image
of $T'$ under the isogeny.
Letting $L''$ be the character group of $T''(\ff)$, we obtain
a map $L''\longrightarrow L'$ whose kernel and cokernel
have order dividing $q+1$.
Therefore, if $(k,q)\neq(3,2)$, $C'$ lies in the image of $L''$,
and its preimage contains
a subgroup $C''$ isomorphic to $C'$.  The preimage of $v''$ of $v'$
is then conjugate self dual and in general position.
If $(k,q)=(3,2)$, then the kernel and cokernel of $L''\rightarrow L'$
have odd order. Therefore the existence of conjugate self-dual element
$v''\in L''$ follows from Lemma \ref{lem:odd-iso-DL}.

Now consider $n=4k$.
If $k>1$ is odd, then choose an elliptic torus $T \subset G$ 
such that
$T(\ff)=\TT_{k+2} \times \TT_{k+2} \times \TT_{k-2} \times \TT_{k-2}$.
If $k$ is even, then 
choose $\TT_{k+1}\times \TT_{k+1} \times \TT_{k-1} \times \TT_{k-1}$.
Finally for the case $n=4k+1$, choose
$\TT_{2k-1}\times \TT_{2k-1}\times \TT_1\times \TT_1 \times \TT_1$.
In all these cases,
Lemma \ref{lem:v-element} shows the existence of a conjugate
self-dual element of $L$ in general position.
Note that the tori constructed above are good
except when $n \equiv 1 \bmod 4$, or 
$n=8$ or $12$.
In the cases where the torus is good,
we have such an element $v \in L$ whose Weyl
orbit lies in a group of order coprime to $q+1$.
Such a group must have an isomorphic image in the character group
$L'$ of $T'(\ff)$, where $T' = T'\cap \SU(n)$,
and an isomorphic preimage in the character group of the corresponding
torus in any isogenous image of $\SU(n)$.

The result about isogenous images of $\SU(n)$
for $n=4k+1$ follows from Lemma \ref{lem:odd-iso-DL}.

We will deal with the cases of $\SU(8)$ and $\SU(12)$
in Lemma \ref{lem:SU8,12},
with the isogenous images of $\SU(4)$, $\SU(8)$, and $\SU(12)$
in Proposition \ref{prop:semisimple}.
\end{proof}

\begin{lem}
\label{lem:SU8,12}
Let $n=8$ or $12$.
Then $\U(n)(\ff)$ has a self-dual, Deligne-Lusztig cuspidal representation
that has trivial central character, and whose restriction
to $\SU(n)(\ff)$ remains irreducible.
\end{lem}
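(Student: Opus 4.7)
The plan is to use the torus $T$ from the proof of Proposition \ref{prop:SU}, with $T(\ff) \cong \TT_m \times \TT_m \times \TT_1 \times \TT_1$, where $m = 3$ if $n = 8$ and $m = 5$ if $n = 12$. Write $L = L_m \times L_m \times L_1 \times L_1$ for the character group of $T(\ff)$; the rational Weyl group is $\Omega = (\mathbb{Z}/m)^2 \rtimes (S_2 \times S_2)$, with $\mathbb{Z}/m$ acting on $L_m$ via multiplication by $q^2$.

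I would take the character $\chi$ corresponding to $v = (v_m, -v_m, w, -w)$, where $v_m$ generates the cyclic group $L_m$ (of order $q^m + 1$) and $w$ is a nonzero element of $L_1$ with $w \neq -w$. Three of the four desired properties are then immediate: $v$ is conjugate self-dual because $-v$ is obtained from $v$ by simultaneously swapping both pairs of factors; $v$ has trivial central character because both pairs $(v_m, -v_m)$ and $(w, -w)$ cancel under the restriction $L \to L_1$ induced by the diagonal embedding of the center; and $v$ is in general position thanks to $w \neq -w$ together with the fact that $-1 \notin \langle q^2 \rangle$ in $(\mathbb{Z}/(q^m + 1))^\times$ (which holds since $m$ is odd).

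The substantive step is to verify that $\pi(T, \chi)$ remains irreducible on restriction to $\SU(n)(\ff)$. By Clifford theory this amounts to showing that, for every nontrivial $\eta \in L_1$, the element $v + \delta(\eta)$ does not lie in the $\Omega$-orbit of $v$; here $\delta : L_1 \to L$ is the map $\eta \mapsto (\iota(\eta), \iota(\eta), \eta, \eta)$ dual to the determinant, and $\iota : L_1 \hookrightarrow L_m$ is the injection dual to the norm, whose image equals $L_m^{\mathbb{Z}/m} = L_m[q+1]$. I would consider the four cases indexed by $S_2 \times S_2$. The two cases not swapping the $\TT_1$-factors force $\eta = 0$ directly from the $\TT_1$-components. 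The two cases that do swap them reduce to $\eta = 2w$ together with $4w = 0$, coupled with a $\TT_m$-level equation of the form $\iota(\eta) = (q^{2a} - 1) v_m$ (when the $\TT_m$-factors are not swapped) or $\iota(\eta) = -(q^{2a} + 1) v_m$ (when they are), for some $a \in \{0, \ldots, m-1\}$. Using $\iota(\eta) \in L_m[q+1]$ and the factorization $q^m + 1 = (q+1) \Phi_{2m}(q)$ valid for odd prime $m$, these translate to $\Phi_{2m}(q) \mid q^{2a} - 1$ or $\Phi_{2m}(q) \mid q^{2a} + 1$, respectively.

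The hard part will be ruling out these divisibilities. Since $q^m \equiv -1 \pmod{\Phi_{2m}(q)}$ and $m$ is odd, a short argument shows $\Phi_{2m}(q) \nmid q^{2a} + 1$ for any $a$, disposing of the second case entirely. The first divisibility is controlled by the order of $q$ modulo $\Phi_{2m}(q)$: a direct check shows this order equals $2m$ for all $q \geq 2$ and $m \in \{3, 5\}$ except when $(q, m) = (2, 3)$ (where it is $2$). When the order is $2m$, only $a = 0$ survives in $\{0, \ldots, m-1\}$, forcing $\iota(\eta) = 0$ and hence $\eta = 0$. The residual case $(q, m) = (2, 3)$ is handled by the $\TT_1$-analysis: since $|L_1| = 3$, the condition $4w = 0$ forces $w = 0$, contradicting our choice $w \neq 0$. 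In every case $\eta = 0$ is forced, as required.
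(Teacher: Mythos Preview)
Your argument is correct. You use the same elliptic torus as the paper, $T(\ff)\cong\TT_m\times\TT_m\times\TT_1\times\TT_1$ with $m=(n/2)-1$, and the same shape $(v_m,-v_m,w,-w)$ for the character, but you make a different choice for the $L_m$-component: the paper takes $c$ to generate a subgroup $C_k\subset L_m$ of \emph{prime} order $\ell$ coprime to $q+1$, whereas you take $v_m$ to be a generator of all of $L_m$. The paper's choice makes the restriction-to-$\SU(n)$ step a one-liner: the Weyl orbit of $v^{\mathrm u}$ lies in $C_k\times C_k\times L_1\times L_1$, and since $\gcd(\ell,q+1)=1$ the translate $c+\iota(\lambda)$ falls outside $C_k$ for $\lambda\neq 0$. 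Your choice forces the longer four-case analysis with the cyclotomic divisibility $\Phi_{2m}(q)\mid q^{2a}\pm 1$, which you carry out correctly (the elimination of $\Phi_{2m}(q)\mid q^{2a}+1$ and the order computation for $q\bmod\Phi_{2m}(q)$ are both right).

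What your approach buys is that it covers the case $(q,n)=(2,8)$, where $L_m\cong\mathbb Z/9$ has no element of prime order coprime to $q+1=3$; the paper's stated choice of $c$ is unavailable there. Your separate treatment of $(q,m)=(2,3)$ via $|L_1|=3$ and $4w=0\Rightarrow w=0$ handles exactly this leftover case. So the paper's route is slicker when the auxiliary prime $\ell$ exists, while yours is uniform in $q$.
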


\begin{proof}
Choose an elliptic torus $T^{\mathrm{u}}\subset \U(n)$ such that
$T^{\mathrm{u}}(\ff) =  \TT_k \times \TT_k \times \TT_1 \times \TT_1$,
where $k=(n/2)-1$.
Let $T = T^{\mathrm{u}} \cap \SU(n)$,
and $T_\der$ the image of $T^{\mathrm{u}}$ in $\PU(n)$.
Let $L^\mathrm{u}$, $L$, and $L_\der$ denote the groups of characters of
the groups of rational points of these tori.
Then $L^\mathrm{u}$ is a product
$L_k \times L_k \times L_1 \times L_1$,
where each $L_i$ is cyclic of order $q^i+1$.
Write 
$v^{\mathrm{u}}= (c,-c,d,-d)$,
where $c\in L_k$ generates a subgroup $C_k$ of prime order $\ell$
which is coprime to $q_{i}+1$, and $d$ is a generator of $L_1$. 
The element $v^{\mathrm{u}}\in L^{\mathrm{u}}$
is conjugate self-dual and in general position.
Regarding $L_1$ as a subgroup of $L_k$, we see that the sum
of the coordinates of $v^{\mathrm{u}}$ is $0$, meaning
that $v^{\mathrm{u}} \in \im( L_\der \longrightarrow L^{\mathrm{u}})$.
Therefore, our Deligne-Lusztig cuspidal representation
of $\U(n)(\ff)$ constructed from $v^{\mathrm{u}}$ is an isogeny
restriction of a representation of $\PU(n)(\ff)$.

Since $L_1$ embeds in each factor of $L^{\mathrm{u}}$
we have a diagonally embedded subgroup $\diag(L_1) \subset L^{\mathrm{u}}$,
and $L$ is the quotient $L^{\mathrm{u}} / \diag(L_1)$.

Let $v$ denote the image of $v^{\mathrm{u}}$ in $L$.
Then $v$ is obviously conjugate self-dual.
It remains to see that $v$ is in general position.
That is, we need to see that for nonzero $\lambda \in L_1$,
$v^{\mathrm{u}} + (\lambda,\lambda,\lambda,\lambda)$
cannot be a Weyl conjugate of 
$v^{\mathrm{u}}$.
But this follows from the fact that the Weyl orbit of $v^{\mathrm{u}}$
is contained in $C_k \times C_k \times L_1 \times L_1$,
and $c+\lambda\notin C_k$ since
$\gcd(\ell,q+1)=1$.
\end{proof}

\section{Semisimple groups}

We assume now that $G$ is semisimple.
Consider the central $k$-isogeny $\widetilde{G}\longrightarrow G$,
where
$\widetilde{G}$ is the simply
connected cover of $G$, and let $\widetilde{T}_{0}$ be the maximal
torus of $\widetilde{G}$ that surjects to $T_{0}$ under this isogeny.
Write $\widetilde{G}=\prod_{i\in I}\Rest_{E_{i}/\ff}\widetilde{G}_i$
(resp.\ $\widetilde{T}_{0}=\prod_{i\in I}\Rest_{E_{i}/\ff}\widetilde{T}_{0i}$)
where $I$ is a finite indexing set and the groups $\widetilde{G}_i$
(resp.\ $\widetilde{T}_{0i}$)
are absolutely almost simple (resp.\ maximally split maximal  
tori of $\widetilde{G}_i$),
and $E_{i}/\ff$ are finite extensions of degree $n_i$.
Let $\widetilde{X}$
(resp.\ $\widetilde{X}_{i}$)
denote the character lattice of $\widetilde{T}_{0}$
(resp.\ $\widetilde{T}_{0i}$).
Let $\Gamma_{\ff}$ (resp.\ $\Gamma_{E_{i}}$) denote the absolute Galois
group of $\ff$ (resp.\ $E_{i}$).
They are cyclic groups generated by $\sigma_{\ff}$
(resp.\ $\sigma_{\ff}^{n_{i}}$).
The isogeny induces an inclusion of lattices
\[
X\hookrightarrow\widetilde{X}
=
\bigoplus_{i\in I}\Ind_{\Gamma_{E_{i}}}^{\Gamma_{\ff}}\widetilde{X}_{i} \, .
\]
The Weyl group $W$ is a product of Weyl
groups $\prod_{i\in I}W_{i}^{n_{i}}$ where $W_{i}$
is the Weyl group of $\widetilde{G}_i$.
Let $\omega_{i}$ be a ${\sigma_\ff}^{n_i}$-elliptic element of $W_{i}$
and let $\overset{\circ}{\omega}_{i}$ be the element of 
$W_{i}^{n_{i}}$ that acts on
$\Ind_{\Gamma_{E_{i}}}^{\Gamma_{\ff}}\widetilde{X}_{i}$
via the action of $\omega_{i}$ on $\widetilde{X}_{i}$.
Write $\omega=\prod \overset{\circ}{\omega_{i}}$,
$\widetilde{T}=(\widetilde{T}_{0})_{\omega}$,
and $\widetilde{T}_i=(\widetilde{T}_{0i})_{{\omega}_i}$. 
Let 
\begin{align*}
\widetilde{L}
& :=  \Hom(\widetilde{T}(\ff),\mathbb{C}^{\times}),\\
L
& :=  \Hom(T(\ff),\mathbb{C}^{\times}),\\
\widetilde{L}_{i}
& :=
\Hom(
	\Rest_{E_{i}/\ff}\widetilde{T}_{i}(\ff)
	,
	\mathbb{C}^{\times}
).
\end{align*}
Let
$z_{i}:=
	\ker(\Rest_{E_{i}/\ff}\widetilde{T}_{i}\rightarrow T)$.
Then
$\coker(L\rightarrow\widetilde{L}_{i})
=\Hom(z_{i}(\ff),\mathbb{C}^{\times})$.
Consequently, the order of $\coker(L\rightarrow\widetilde{L}_{i})$
divides the order of $\widetilde{Z}_{i}(E_{i})$, where $\widetilde{Z}_{i}$
denotes the center of $\widetilde{G}_{i}$.

We let $h_{i}$ denote the Coxeter number of $\widetilde{G}_{i}$
and $q_i:=q^{n_i}$.

\begin{prop}
\label{prop:semisimple}
Suppose $G$ is semisimple
and satisfies Hypothesis \ref{hyp:SU-restrictions}(a).
Then $G(\ff)$ admits irreducible, cuspidal, Deligne-Lusztig representations.
Moreover, if $G$ also satisfies Hypothesis \ref{hyp:SU-restrictions}(b)
and has no factor of type
$A_n$ ($n$ even),
then $G(\ff)$ admits irreducible, \emph{self-dual},
cuspidal, Deligne-Lusztig representations.
\end{prop}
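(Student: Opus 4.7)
The plan is to build a character of $T(\ff)$ that is in general position and (in the self-dual setting) conjugate self-dual, by working one simply-connected almost-simple factor at a time. For each $i$, the $\ff$-group $\Rest_{E_{i}/\ff}\widetilde{G}_{i}$ is absolutely almost simple and simply connected, so Theorem \ref{thm:sc} applies: it yields a $\sigma_{\ff}^{n_{i}}$-elliptic element $\omega_{i}\in W_{i}$ and a character $\chi_{i}\in\widetilde{L}_{i}$ in general position, and conjugate self-dual whenever $\widetilde{G}_{i}$ is not of type $A_{n}$ with $n$ even. Inspecting the proofs---Lemma \ref{lem:coprime} in the generic Coxeter case, Lemma \ref{lem:v-element} together with the constructions in the proofs of Propositions \ref{prop:SU} and \ref{prop:2Dn} for the classical types, Proposition \ref{prop:2E6} for $^{2}E_{6}$, and Lemma \ref{lem:SU8,12} for $\SU(8)$ and $\SU(12)$---one sees that $\chi_{i}$ can be chosen with its Weyl orbit lying in a subgroup of order coprime to $|\widetilde{Z}_{i}(E_{i})|$; in the sporadic cases where this coprimality fails directly, the character is instead constructed to descend to the adjoint image (as in Lemma \ref{lem:SU8,12}) and hence lifts automatically to any isogenous cover.

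Next, set $\omega=\prod\overset{\circ}{\omega}_{i}$ as in the paragraph preceding the proposition, and take $\widetilde{T}$, $T$, $\widetilde{L}$, $L$, $\Omega$ accordingly. Because $W=\prod_{i}W_{i}^{n_{i}}$ is a direct product and $\omega$ is block-diagonal, the $\sigma_{\ff}$-centralizer $\Omega$ of $\omega$ decomposes as $\prod_{i}\Omega_{i}$, acting factorwise on $\widetilde{L}=\prod_{i}\widetilde{L}_{i}$, where $\Omega_{i}$ is the corresponding per-factor twisted centralizer. Assemble $\widetilde{\chi}=(\chi_{i})\in\widetilde{L}$. Since $|\coker(L\to\widetilde{L}_{i})|$ divides $|\widetilde{Z}_{i}(E_{i})|$, the coprimality arranged above forces $\widetilde{\chi}$ to lie in the image of the injection $L\hookrightarrow\widetilde{L}$; let $\chi\in L$ be its preimage.

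It then remains to verify that $\chi$ inherits the required properties. Since $L\hookrightarrow\widetilde{L}$ is injective and $\Omega$-equivariant, any element of $\Omega$ that fixes $\chi$ also fixes $\widetilde{\chi}$; combined with the factorwise action and the general position of each $\chi_{i}$, this forces the stabilizer of $\chi$ to be trivial, so $\chi$ is in general position. Analogously, for each $i$ the conjugate-self-duality of $\chi_{i}$ supplies $\omega'_{i}\in\Omega_{i}$ with $\omega'_{i}\chi_{i}=\chi_{i}^{-1}$; the product $\omega'=(\omega'_{i})\in\prod_{i}\Omega_{i}=\Omega$ sends $\widetilde{\chi}$ to $\widetilde{\chi}^{-1}$, and by equivariance and injectivity $\omega'\chi=\chi^{-1}$. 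Therefore $\pi(T,\chi)$ is the desired irreducible cuspidal Deligne-Lusztig representation of $G(\ff)$, self-dual under the stronger hypothesis. The existence-only statement (without self-duality) is handled by exactly the same plan, dropping the conjugate-self-dual requirement on each $\chi_{i}$.

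The main obstacle will be securing order-coprimality of each $\chi_{i}$ to $|\widetilde{Z}_{i}(E_{i})|$: this is precisely what the detailed order control in Lemma \ref{lem:coprime} and Lemma \ref{lem:v-element} was engineered to provide, and it is no coincidence that the cases where coprime characters cannot be produced by Coxeter or ``good''-torus constructions---most notably $\SU(8)$ and $\SU(12)$---are the ones which required the separate treatment of Lemma \ref{lem:SU8,12}. Everything else is formal manipulation of the product decomposition already laid out above.
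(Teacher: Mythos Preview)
Your overall plan---build the character factorwise in $\widetilde{L}=\prod_i\widetilde{L}_i$ and then transfer to $L$---matches the paper's strategy, but the transfer step contains a genuine error. You assert that $L\hookrightarrow\widetilde{L}$ is injective and use this to deduce $\omega'\chi=\chi^{-1}$ from $\omega'\widetilde{\chi}=\widetilde{\chi}^{-1}$. But dualizing the sequence
\[
1\longrightarrow Z_0(\ff)\longrightarrow \widetilde{T}(\ff)\longrightarrow T(\ff)\longrightarrow H^1(\ff,Z_0)\longrightarrow 1
\]
(with $Z_0=\ker(\widetilde{G}\to G)$) shows that $\ker(L\to\widetilde{L})\cong H^1(\ff,Z_0)^\vee$, which is typically nontrivial: for $G=\PGL_2$ over $\ff_3$ the map $L\to\widetilde{L}$ is multiplication by $2$ on $\mathbb{Z}/4\mathbb{Z}$. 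Equivariance alone only gives $\omega'\chi+\chi\in\ker(L\to\widetilde{L})$, so conjugate self-duality of $\chi$ does not follow. (Your general-position argument, by contrast, is fine: it uses only equivariance, not injectivity.)

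The paper repairs this by two devices you omit. First, before constructing anything it reduces via Lemma~\ref{lem:odd-iso-DL} to the case where $Z_0$ is a $2$-group, splits off as direct simply-connected factors the types with odd-order center ($E_6$, $^2E_6$, $^2A_n$ for $n>2$ even), and disposes of $q$ even via Lemma~\ref{lem:trivial-iso}. Second, and more importantly, for each remaining factor it lifts not the element $\widetilde{v}_i$ alone but the entire subgroup $\widetilde{C}_i$ generated by its Weyl orbit: coprimality of $|\widetilde{C}_i|$ to both $|\ker|$ and $|\coker|$ of $L\to\widetilde{L}$ guarantees that $\widetilde{C}_i$ lifts isomorphically to a subgroup $C_i\subseteq L$, and conjugate self-duality then transfers because both $\omega' v_i$ and $-v_i$ lie in $C_i$, on which the map to $\widetilde{L}$ is injective. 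Your ``descent to adjoint'' fallback handles existence of a lift but not this uniqueness issue. Note also that your framework does not cleanly cover the $A_1$ factors: when $q_i+1$ is a power of $2$ (e.g.\ $q_i=3$), no character of $\widetilde{L}_i\cong\mathbb{Z}/(q_i+1)\mathbb{Z}$ in general position has odd order, nor does a generator lie in the image of the adjoint character group, so neither of your two alternatives applies; one must instead work directly in $L$ for these factors.
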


\begin{proof}
Recall that we write the simply connected cover $\widetilde{G}$
of $G$
as $\prod_{i\in I} \Rest_{E_i/\ff} \widetilde{G}_i$.

We first reduce to the case where none of the factors
in this product
has any of the types that required special handling
in the proofs of Proposition \ref{prop:coxeter} and Theorem \ref{thm:sc}.
We have a central $\ff$-isogeny $H_1\times H_2 \longrightarrow G$,
where $H_1$ is a direct product of groups having one of the types
$A_5(2)$, $C_3(2)$, $D_4(2)$, $G_2(2)$, or ${}^2A_3(2)$,
no factor of $H_2$
has any of those types,
and the restriction of the isogeny to $H_2$ has trivial kernel.
Since the center of $H_1$ has no nontrivial rational points,
Lemma \ref{lem:trivial-iso} shows that $H_1(\ff) \times H_2(\ff)$
is isomorphic to $G(\ff)$.
From Theorem \ref{thm:sc}, 
every factor of $H_1(\ff)$ has irreducible Deligne-Lusztig
cuspidal representations and if $H_1$ has no factor of type ${}^2A_3(2)$,
then it also 
has self-dual Deligne-Lusztig
cuspidal representations.
Therefore, we may replace $G$ by $H_2$,
and thus assume from now on that
$G$ has no factor of type
$A_5(2)$, $C_3(2)$, $D_4(2)$, $G_2(2)$, or ${}^2A_3(2)$.

Write the indexing set $I$ as a disjoint union
$I=I_1\sqcup I_2 \sqcup I_3$,
where
\[
\text{$\widetilde{G}_{i}$ is of type}
\begin{cases}
A_1 & \text{if $i\in I_1$},\\
\text{${}^2A_k$ ($k>1$ odd) or ${}^2D_k$ ($k>4$ odd)} & \text{if $i\in I_2$},\\
\text{something else} & \text{if $i\in I_3$}.
\end{cases}
\]

For $i\in I$, let $\widetilde{T}_i^\mathrm{c}$
(resp.\ $\widetilde{T}^\mathrm{c}$, $T^\mathrm{c}$)
denote the Coxeter torus of $\widetilde{G}_i$
(resp.\ $\widetilde{G}$, $G$).
Let
$\widetilde{L}_{i}^\mathrm{c}
:=
\Hom(\Rest_{E_{i}/\ff}\widetilde{T}_{i}^\mathrm{c}(\ff),\mathbb{C}^{\times})$
and $L^\mathrm{c} := \Hom(T^\mathrm{c}(\ff),\mathbb{C}^{\times})$.

As in Proposition \ref{prop:coxeter},
for $i\in I\backslash I_1$,
let $\widetilde{u}_i$ be an element in general position in 
$\widetilde{L}_{i}^\mathrm{c}$
of order $\ell_i$, where $\ell_i$ is a prime such that the
multiplicative order of $q_i$ mod $\ell_{i}$ is  $h_{i}$.
The Weyl orbit of $\widetilde{u}_i$ lies in a cyclic group $\widetilde{C}_i$
of order $\ell_i$. Lemma \ref{lem:coprime}
implies that 
$\ell_{i}$ is coprime to
$|\coker(L^\mathrm{c}\rightarrow\widetilde{L}_{i}^\mathrm{c})|$.
Therefore $\widetilde{u}_i$ lifts to an element $u_i\in L^\mathrm{c}$ such that
the orbit of $u_i$ is contained in
a subgroup $C_i^\mathrm{c}\cong \widetilde{C}_i$ of $L^\mathrm{c}$. 

For $i\in I_1$, 
define $u_i$ to be the image in
$L^\mathrm{c}$ of a generator of the cyclic group $\widetilde{X}_i$. 

Write $u=\sum_{i\in I}u_{i}$. 
Then $u$ is in general position.
We had to assume Hypothesis \ref{hyp:SU-restrictions}(a)
in order to construct $u$, so under these conditions,
$G(\ff)$ has an irreducible Deligne-Lusztig cuspidal representation,
as claimed.

Now assume that $G$ also satisfies
Hypothesis \ref{hyp:SU-restrictions}(b),
and has no factor of type $A_n$ for $n$ even.
It only remains to prove that $G(\ff)$ has
irreducible, self-dual Deligne-Lusztig representations.
We first make some more reductions. 

Our central $\ff$-isogeny
$\widetilde{G}\longrightarrow G$
factors into central $\ff$-isogenies
$\widetilde{G}\longrightarrow G'$
and
$G'\longrightarrow G$,
whose the kernels are (respectively)
a $2$-group and a group of odd order.
From Lemma \ref{lem:odd-iso-DL},
it would be enough to show that $G'(\ff)$ has
self-dual, Deligne-Lusztig cuspidal representations.
Therefore, we may replace $G$ by $G'$,
and assume from now on that the kernel of our isogeny
$\widetilde{G} \longrightarrow G$ is a $2$-group.

Therefore, we may write $G = G_0 \times H$,
where $H$ is a direct product of simply connected groups
of type $E_6$, $^2E_6$, or $^2A_n$ ($n>2$ even),
and $G_0$ has no factors of those types.
From Propositions \ref{prop:2E6}
and \ref{prop:SU},
$H(\ff)$ has irreducible, self-dual, Deligne-Lusztig cuspidal representations.
Therefore, we may replace $G$ by $G_0$, and assume from now on that
$G$ has no factors
of type $E_6$, $^2E_6$, or $^2A_n$ ($n>2$ even).

If $q$ is even, then our result follows from Theorem \ref{thm:sc}
and Lemma \ref{lem:trivial-iso}.
Therefore, we may and will assume from now on that $q$ is odd.

For $i\in I\backslash I_2$,
choose $\widetilde{T}_i$ to be $\widetilde{T}_i^\mathrm{c}$.
For $i\in I_2$, choose $\widetilde{T}_i$ to be 
an elliptic torus as in the proofs of
Propositions \ref{prop:2Dn} and \ref{prop:SU}. 
If $\widetilde{G}_i$ admits
fine tori (Definition \ref{def:good}),
we require $\widetilde{T}_i$ to be fine.
These choices of $\widetilde{T}_i$ determine an elliptic torus $T$ in $G$.

For $i\in I_{1}$, 
define $v_{i}$ to be the image in $L$ 
of a generator of the cyclic group $\widetilde{X}_i$. 

For $i\in I_3$,
$\widetilde{u}_i$ lifts to an element $v_i\in L$ such that
the orbit of $v_i$ is contained in
a subgroup $C_i\cong \widetilde{C}_i$ of $L$. 

For $i\in I_{2}$,
let $\widetilde{v}_{i}$ be
a conjugate self-dual element as in Lemma \ref{lem:v-element}.
If $\widetilde{T}_i$ is fine,
Lemma \ref{lem:v-element} implies that the Weyl orbit 
of $\widetilde{v}_{i}$ lies in a subgroup
$\prod_{j\in\mathfrak{S}}{\widetilde{C}_{ij}}$ of $\widetilde{L}_i$
where each $\widetilde{C}_{ij}$
is either trivial
or cyclic of prime order $\ell_{ij}$ which is coprime to
$|\coker(L\rightarrow\widetilde{L}_{i})|$.
Therefore $\widetilde{v}_i$ lifts to an element $v_i$ in $L$
such that the orbit of $v_i$ lies in
a subgroup $\prod_{i\in\mathfrak{S}}C_{ij}$ of $L$
with $C_{ij}\cong \widetilde{C}_{ij}$.

If $\widetilde{G}_{i}$ admits no fine tori, then $\widetilde{G}_{i}=\SU(n)$
with $n=4$, $8$ or $12$.
Let 
$\widetilde{T}_{i}^{\mathrm{u}} \subset \U(n)$
be the elliptic torus containing
our elliptic torus $\widetilde{T}_i \subset \SU(n)$.
Let $T_{i,\der}$ denote the image of $\widetilde{T}_i$ in $\PU(n)$,
and let $L_{i,\der}$ denote the group of characters of $T_{i,\der}(\ff)$.
In the proofs of Lemmas \ref{lem:U-crude} and \ref{lem:SU8,12},
we constructed an element
$\widetilde v_i^{\mathrm{u}} \in
\im(L_{i,\der} \rightarrow \widetilde{L}^{\mathrm{u}}_i)$
that is conjugate self-dual and in general position,
and whose image $\widetilde v_i \in \widetilde{L}_i$
is also in general position.
Since
$\im(L_{i,\der} \longrightarrow \widetilde{L}_i)$
is contained in
$\im(L \longrightarrow \widetilde{L}_i)$,
we have that
$\widetilde{v}_{i} \in \im(L\longrightarrow\widetilde{L}_{i})$.

Thus, for suitably large $q$, $\widetilde{v}_{i}\in\im(L)$
is conjugate self-dual and in general position.
As usual, let $\widetilde{C}_i$ denote the group generated by
the Weyl orbit of $\widetilde{v}_i$.
Then the pre-image of $\widetilde{C}_i$ in $L$ contains a subgroup
$C_i$ isomorphic to $\widetilde{C}_i$,
where the isomorphism is equivariant under the Weyl group action.
Let $v_i\in L$ denote the inverse image of $\widetilde{v}_i$ under
this map isomorphism.
Then $v_i$ is conjugate self-dual and in general position.

Write $v=\sum_{i\in I}v_{i}$.
We had to assume 
Hypothesis \ref{hyp:SU-restrictions}(b)
in order to construct $v$ and it is easily seen to be 
conjugate self-dual.
Thus, under these conditions,
$G(\ff)$ has an irreducible, self-dual, Deligne-Lusztig, cuspidal representation.
\end{proof}

\section{Reductive groups}

Let $G$ be a connected reductive group defined over a finite
field $\ff$ of cardinality $q$.
Let $Z^{\circ}$ denote the identity component of the center $Z$
of $G$. 

\begin{lem}
\label{lem:ss}
Let $\tau'$ be a representation of $(G/Z^\circ)(\ff)$.
Then $\tau'$ pulls back to a representation $\tau$ of $G(\ff)$.
Moreover, if $\tau'$ satisfies any combination of the properties of being
irreducible, cuspidal, Deligne-Lusztig type or self-dual, 
then for the same combination of properties, so does $\tau$.
\end{lem}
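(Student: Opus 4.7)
The plan is to realize $\tau$ as the pullback $\tau := \tau' \circ \pi$ along the quotient morphism $\pi\colon G \twoheadrightarrow G/Z^\circ$. First I would note that since $Z^\circ$ is connected, Lang's theorem gives $H^1(\ff,Z^\circ)=0$, so $\pi$ induces a surjection $G(\ff) \twoheadrightarrow (G/Z^\circ)(\ff)$ with kernel $Z^\circ(\ff)$. Irreducibility and self-duality transfer immediately from this surjectivity: the $G(\ff)$-subrepresentations of $\tau$ coincide with the $(G/Z^\circ)(\ff)$-subrepresentations of $\tau'$, and $\tau^\vee = (\tau')^\vee \circ \pi$, so any intertwiner $\tau' \xrightarrow{\sim} (\tau')^\vee$ pulls back to one $\tau \xrightarrow{\sim} \tau^\vee$.

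For cuspidality, I would use that $\pi$ induces a bijection between parabolic $\ff$-subgroups of $G$ and parabolic $\ff$-subgroups of $G/Z^\circ$, since $Z^\circ$ lies in every Levi factor. If $P = MN$ is a Levi decomposition in $G$, then $Z^\circ \cap N = \{1\}$, so $\pi$ restricts to an $\ff$-isomorphism $N \xrightarrow{\sim} \pi(N)$. Hence $N(\ff) \cong \pi(N)(\ff)$ and the Jacquet modules $\tau_{N(\ff)}$ and $\tau'_{\pi(N)(\ff)}$ are canonically identified, so cuspidality of $\tau'$ implies cuspidality of $\tau$.

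For the Deligne--Lusztig property, suppose $\tau'$ is the cuspidal irreducible component of (up to sign) $R_{T'}^{G/Z^\circ}(\chi')$, where $T' \subset G/Z^\circ$ is a maximal elliptic $\ff$-torus and $\chi'$ is in general position. I would lift to $T := \pi^{-1}(T')$, a maximal $\ff$-torus of $G$ containing $Z^\circ$, and to $\chi := \chi' \circ \pi|_{T(\ff)}$; the map $T(\ff)\twoheadrightarrow T'(\ff)$ is surjective by Lang again, and $\pi$ induces an isomorphism of rational Weyl groups $W(G,T)(\ff) \xrightarrow{\sim} W(G/Z^\circ,T')(\ff)$ compatible with the actions on characters, so $\chi$ inherits general position from $\chi'$. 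Invoking the standard compatibility of Deligne--Lusztig induction with quotients by connected central subgroups then gives $R_T^G(\chi) = R_{T'}^{G/Z^\circ}(\chi') \circ \pi$, and passing to the cuspidal irreducible component recovers $\tau$.

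The main obstacle I anticipate is this last step: citing the quotient-compatibility of Deligne--Lusztig induction cleanly, since one must track both the geometry of the associated varieties and the behaviour of the character formula through $\pi$. Once that is in hand, the phrase ``any combination'' in the statement requires no extra work: each of the four properties is preserved individually by pullback along the surjection $\pi$ with central kernel, so any conjunction of them is preserved as well.
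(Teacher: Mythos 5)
Your proposal is correct and takes essentially the same approach as the paper: use Lang's theorem to show $G(\ff)$ surjects onto $(G/Z^\circ)(\ff)$, pull $\tau'$ back along this surjection, and check each property is preserved. The paper's own proof stops after establishing the surjection, dismissing the remaining verifications as ``easy to see,'' whereas you spell them out (correctly) property by property.
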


\begin{proof}
We have a short exact sequence
\[
1\rightarrow Z^{\circ}\rightarrow G\rightarrow G/Z^{\circ}\rightarrow1.
\]
This gives a long exact sequence
\[
1
\rightarrow Z^{\circ}(\ff)
\rightarrow G(\ff)
\rightarrow G/Z^{\circ}(\ff)
\rightarrow \mathrm{H}^{1}(\ff,Z^{\circ}).
\]
By Lang's theorem, $\mathrm{H}^{1}(\ff,Z^\circ)$ is trivial.
Therefore, $G(\ff)$ surjects onto $(G/Z^{\circ})(\ff)$,
and thus $\tau'$ 
can be pulled back
to a representation $\tau$ of $G(\ff)$.
It is easy to see that if $\tau'$ satisfies any combination of the properties of
being irreducible, cuspidal, of Deligne-Lusztig type,
or self-dual,
then for the same combination of properties, the same is true for $\tau$.
\end{proof}

\begin{thm}
\label{thm:existence-cuspidal-finite}
Let $G$ be a connected reductive group over a finite field $\ff$.
Then $G(\ff)$ admits irreducible cuspidal representations. 
\end{thm}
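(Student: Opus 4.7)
The plan is to reduce to the simply connected semisimple case already handled in Sections 4--5, and to dispose of the one remaining case excluded by Hypothesis \ref{hyp:SU-restrictions}(a) using the unipotent cuspidal representation of $\SU_3(\ff_2)$ from Remark \ref{rem:SU-small}(c).

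First I would apply Lemma \ref{lem:ss} to reduce to the semisimple case: a cuspidal representation of $(G/Z^\circ)(\ff)$ pulls back along the surjection $G(\ff) \twoheadrightarrow (G/Z^\circ)(\ff)$ (from Lang's theorem) to a cuspidal representation of $G(\ff)$, so I may assume $G$ is semisimple. If $G$ satisfies Hypothesis \ref{hyp:SU-restrictions}(a), then Proposition \ref{prop:semisimple} immediately produces an irreducible cuspidal (in fact Deligne--Lusztig) representation of $G(\ff)$, and there is nothing more to prove. So assume that $G$ has at least one $\ff$-almost-simple factor of type $^2A_2(2)$.

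Next I would pass to the simply connected cover and write $\widetilde G = \widetilde G_{\mathrm{good}} \times \widetilde G_{\mathrm{bad}}$, where $\widetilde G_{\mathrm{bad}}$ is a product of copies of $\SU_3$ over $\ff_2$ (accounting for the $^2A_2(2)$ factors) and $\widetilde G_{\mathrm{good}}$ has no such factors, so that $\widetilde G_{\mathrm{good}}$ satisfies Hypothesis \ref{hyp:SU-restrictions}(a). I build an irreducible cuspidal representation $\widetilde \tau$ of $\widetilde G(\ff) = \widetilde G_{\mathrm{good}}(\ff) \times \widetilde G_{\mathrm{bad}}(\ff)$ as an external tensor product: apply Proposition \ref{prop:semisimple} to $\widetilde G_{\mathrm{good}}$, and on each $\SU_3(\ff_2)$ factor take the unique cuspidal unipotent representation of Remark \ref{rem:SU-small}(c). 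Finally, transport $\widetilde\tau$ across the central $\ff$-isogeny $\widetilde G \to G$ to obtain an irreducible cuspidal representation of $G(\ff)$: by Silberger's result (as used inside the proof of Lemma \ref{lem:odd-iso}), the induction of $\widetilde\tau$ along this isogeny yields a representation whose Jacquet modules against any proper parabolic vanish, and every irreducible component of this induced representation is therefore cuspidal.

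The main technical obstacle is this last transport step, since the kernel of $\widetilde G \to G$ can have even order (so Lemma \ref{lem:odd-iso} does not apply verbatim). The point is that the self-dual hypothesis in Lemma \ref{lem:odd-iso} was only invoked to extract a \emph{self-dual} component from the induced representation; the cuspidality-preservation portion of the argument, which rests on Silberger's isogeny lemma and a direct Jacquet-module computation through inverse images of parabolics, needs no restriction on the order of the kernel. Thus the induction produces an irreducible cuspidal of $G(\ff)$, completing the proof.
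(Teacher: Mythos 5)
Your overall strategy — reduce to the semisimple case by Lemma \ref{lem:ss}, handle the simply connected cover, and transport across the isogeny $\widetilde G \to G$ using the Jacquet-module computation from the proof of Lemma \ref{lem:odd-iso} — is the same as the paper's (the paper routes through Theorem \ref{thm:sc}, which already contains your good/bad split and the appeal to Remark \ref{rem:SU-small}(c) for $\SU_3(\ff_2)$, so your re-derivation is a minor re-organization rather than a genuinely different route).

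However, there is a genuine gap in your transport step. You write that ``the induction of $\widetilde\tau$ along this isogeny'' produces a cuspidal representation of $G(\ff)$, and that the self-dual hypothesis in Lemma \ref{lem:odd-iso} was ``only invoked to extract a self-dual component.'' This misreads the lemma. There is no such thing as inducing a representation of $\widetilde G(\ff)$ along the isogeny $\zeta\colon \widetilde G \to G$: $\widetilde G(\ff)$ is not a subgroup of $G(\ff)$. What the proof of Lemma \ref{lem:odd-iso} actually does is a two-step process: first it \emph{descends} $\tau$ from $\widetilde G(\ff)$ to the finite-index subgroup $\zeta(\widetilde G(\ff)) \cong \widetilde G(\ff)/K(\ff)$ of $G(\ff)$, and only then induces up. The descent is exactly where self-duality and the odd order of $K$ are used: the restriction of the central character of $\tau$ to $K(F)$ must be $\pm 1$-valued (self-duality) and hence trivial (odd order). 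Your $\widetilde\tau$ need not be trivial on $K(\ff)$ a priori — for instance, the kernel of $\SU_{n+1}\to\PGU_{n+1}$ can have rational points of even order — so without an additional observation your construction does not even define a representation of $\zeta(\widetilde G(\ff))$.

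The gap can be closed, but it requires a remark you omitted: the cuspidals you build \emph{can be chosen} trivial on $K(\ff)$. The unipotent cuspidal of $\SU_3(\ff_2)$ has trivial central character, since unipotent representations are trivial on the center. For the remaining factors, the Deligne--Lusztig cuspidals produced by Proposition \ref{prop:coxeter} (and used in Proposition \ref{prop:semisimple}) come from a character of prime order $\ell$, and Lemma \ref{lem:coprime} guarantees $\ell$ is coprime to $|Z(\widetilde G)|$, hence the character is trivial on $K(\ff)\subseteq Z(\widetilde G)(\ff)$. With that additional point, the descent-then-induce argument (and the Jacquet-module computation, which indeed needs no hypothesis on $|K|$) goes through. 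The paper's own proof is also terse here — it says only ``by an argument as in the third paragraph of the proof of Lemma \ref{lem:odd-iso}'' — but at least it does not assert the incorrect claim that self-duality was used only to extract a self-dual component.
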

\begin{proof}
By Lemma \ref{lem:ss}, we can assume that $G$ is semisimple. Then $G$
is isogenous to a group $\prod\Rest_{E_{i}/\ff}\widetilde{G}_{i}$
where the factors $\widetilde{G}_{i}$ are absolutely almost simple.
By an argument
as in the third paragraph of the proof of Lemma \ref{lem:odd-iso},
we can therefore
assume that $G$ is the restriction of scalars of an absolutely almost
simple group. The result then follows from Theorem \ref{thm:sc}. 
\end{proof}

\begin{thm}
\label{thm:existence-sd-finite}
Let $G$ be a connected reductive group defined over a finite
field $\ff$.
If $G$ satisfies Hypothesis \ref{hyp:SU-restrictions}(a),
then $G(\ff)$ admits irreducible, cuspidal, Deligne-Lusztig representations. 
Moreover, if $G$ also satisfies
Hypothesis \ref{hyp:SU-restrictions}(b) and has no factor of type
$A_n$ ($n$ even), 
then $G(\ff)$ admits irreducible, self-dual,
Deligne-Lusztig cuspidal representations.
If $G$ has a factor of type $A_n$ for some even $n$,
then $G(\ff)$ has no self-dual cuspidal representations.
\end{thm}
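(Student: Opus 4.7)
The plan divides into an existence part (the first two sentences of the theorem) and a non-existence part (the last sentence). For existence, I would reduce to the semisimple case using Lemma \ref{lem:ss}: an irreducible cuspidal (respectively self-dual Deligne-Lusztig cuspidal) representation of $(G/Z^{\circ})(\ff)$ pulls back to $G(\ff)$ with the same properties, and $G$ and $G/Z^{\circ}$ have the same $\ff$-almost-simple factor types, so Hypothesis \ref{hyp:SU-restrictions} is preserved under this reduction. The two existence assertions then follow directly from Proposition \ref{prop:semisimple}.

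For non-existence, suppose that $G$ has an $\ff$-almost-simple factor of type $A_n$ for some even $n$, and that $\pi$ is an irreducible, self-dual, cuspidal representation of $G(\ff)$. The plan is to argue by contradiction: from $\pi$ I would extract a self-dual cuspidal representation of $\SL_{n+1}(E)$ for some finite extension $E/\ff$, contradicting Theorem \ref{thm:sc}. Write the simply connected cover of $G^\der$ as
\[
\widetilde{G^\der}\ =\ \widetilde{G_A}\times \widetilde{G_B},
\]
where $\widetilde{G_A}=\Rest_{E/\ff}\SL_{n+1}$ is the type $A_n$ factor and $\widetilde{G_B}$ collects the remaining simply connected factors. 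The key structural observation is that $Z(\widetilde{G_A})$ is a product of copies of $\mu_{n+1}$ and therefore has order coprime to $2$.

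Let $H\subset G(\ff)$ be the image of $\widetilde{G^\der}(\ff)=\SL_{n+1}(E)\times\widetilde{G_B}(\ff)$; it is a normal subgroup, and pullback preserves cuspidality, so the restriction $\pi|_H$ is a direct sum of cuspidal irreducible constituents, transitively permuted by $G(\ff)/H$, each of which lifts to an external tensor $\tau_A\boxtimes\tau_B$ on $\widetilde{G_A}(\ff)\times\widetilde{G_B}(\ff)$. Conjugation by $G(\ff)$ cannot interchange the two factors (they are of different $\ff$-almost-simple types), so the $G(\ff)/H$-action descends to an action on the set $\{\tau_A\}$ of first coordinates. Self-duality of $\pi$ forces $\{\tau_A\}$ to be stable under the contragredient involution $\vee$, and $\vee$ commutes with the $G(\ff)/H$-action.

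The main obstacle is to show that some $\tau_A$ in this set is itself self-dual. The plan is to exploit the odd order of $Z(\widetilde{G_A})$ together with the observation that the only outer $\ff$-automorphism of $\Rest_{E/\ff}\SL_{n+1}$ whose effect on irreducible representations can realise $\vee$ is the graph (transpose-inverse) automorphism. After identifying this piece with $\vee$, the remaining $G(\ff)/H$-action on $\{\tau_A\}$ should factor through a group of odd order, so every $\vee$-orbit lies inside a single $G(\ff)/H$-orbit of odd cardinality and must be a singleton. Arguing that the residual action is indeed of odd order (tracking the isogeny kernel through the factorisation and using that the $2$-part of $\mathrm{Out}_\ff(\widetilde{G_A})$ lives in the graph component) is the delicate step; once it is in place, the resulting self-dual cuspidal $\tau_A$ of $\SL_{n+1}(E)$ contradicts Theorem \ref{thm:sc}, finishing the proof.
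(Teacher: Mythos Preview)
Your existence argument is the same as the paper's: reduce to the semisimple case via Lemma~\ref{lem:ss}, then apply Proposition~\ref{prop:semisimple}.

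For non-existence the paper takes a much shorter route. It observes that there is a central $\ff$-isogeny $\Rest_{E/\ff}\SL_{n+1}\times H\longrightarrow G$ (with $H$ the subgroup of $G$ generated by $Z^\circ$ and the remaining almost-simple factors) whose kernel lies inside the center of the first factor and hence has odd order. Lemma~\ref{lem:odd-iso} then transfers non-existence from $\SL_{n+1}(E)\times H(\ff)$---where it holds by Theorem~\ref{thm:sc}---directly to $G(\ff)$. Your Clifford-theoretic argument is, in effect, reproving one direction of Lemma~\ref{lem:odd-iso} by hand in this special case.

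Your route can be made to work, but the step you flag as delicate is not, and the graph automorphism is a red herring. What matters is not the order of $G(\ff)/H$ but the image of $G(\ff)$ in the outer automorphism group of $\widetilde{G_A}(\ff)$. Since $G$ is connected, conjugation by $G(\ff)$ on the normal factor $G_A$ factors through $G_A^{\mathrm{ad}}(\ff)$, so only \emph{diagonal} outer automorphisms of $\SL_{n+1}(E)$ arise---never graph or field automorphisms. The diagonal outer automorphism group has order dividing $\gcd(n+1,|E|-1)$, which is odd. Thus $\{\tau_A\}$ already has odd cardinality and the involution $\vee$ must fix at least one element (not every $\vee$-orbit need be a singleton, but one fixed point is all you need). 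No ``identification'' of any piece of the action with $\vee$ is required.
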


\begin{proof}
From Lemma \ref{lem:ss} and Proposition \ref{prop:semisimple},
$G(\ff)$ has irreducible,
cuspidal, Deligne-Lusztig representations,
and it has irreducible,
\emph{self-dual}, cuspidal, Deligne-Lusztig representations
if $G$ has no factor of type
$A_n$ ($n$ even).

Suppose $G$ has a factor of type $A_n$ for some even $n$.
Then there is a connected reductive $\ff$-group $H$,
and a central $\ff$-isogeny
$\SL_{n+1} \times H\longrightarrow G$
whose kernel has odd cardinality
and trivial intersection with $H$.
Theorem \ref{thm:sc} shows that
$\SL_{n+1}(\ff) \times H(\ff)$
has no self-dual cuspidal representations.
By Lemma \ref{lem:odd-iso}, neither does $G(\ff)$.
\end{proof}

\section{Reductive $p$-adic groups}

Let $F$ denote a non-archimedean local field,
with residue field $\ff$ of characteristic $p$ and order $q$.
Let $G$ be a connected reductive $F$-group.
For any point $x$ in the building of $G$ over $F$,
let $G(F)_x$, $G(F)_{x,0}$, and $\sfG_x(\ff)$
denote the stabilizer of $x$ in $G(F)$,
the parahoric subgroup of $G(F)$ associated to $x$,
and the reductive quotient of the parahoric subgroup,
i.e., the quotient of $G(F)_{x,0}$ by its pro-$p$-radical
$G(F)_{x,0+}$.
In particular, $\sfG_x$ is a connected reductive $\ff$-group.
When $G$ is a torus, all of the above are independent of the choice
of point $x$,
and it is customary to write $G(F)\bdd$, $G(F)_0$, and $G(F)_{0+}$
in place of $G(F)_x$, $G(F)_{x,0}$, and $G(F)_{x,0+}$.
Here, $G(F)\bdd$ is the maximal bounded subgroup of $G(F)$.

Let $\pi$ be an irreducible, supercuspidal representation of depth
zero. Then there exists a vertex $x$ in the building
of $G$ such that the restriction $\pi|G(F)_{x,0}$ contains the inflation
to $G(F)_{x,0}$ of an irreducible cuspidal representation $\kappa$
of $\sfG_x(\ff)$.

\begin{defn}[\cite{kaletha:regular-sc}*{Definition 3.4.19}]\label{def:reg-dep0}
The representation $\pi$ of $G(F)$ is called \emph{regular} if $\kappa$
is a Deligne-Lusztig cuspidal representation $\pm R_{\sf{S}^{\prime},\bar{\theta}}$ 
of $\sfG_x(\ff)$, associated to an elliptic maximal torus $\sf{S}'$ of $\sfG_x(\ff)$
and a character $\bar{\theta}:\sf{S}'(\ff)\rightarrow\mathbb{C}^{\times}$ that 
is \emph{regular} (\cite{kaletha:regular-sc}*{Definition 3.4.16}).
\end{defn}
Regular depth-zero supercuspidals are in bijection with $G(F)$-conjugacy
classes of \emph{elliptic regular pairs} $(S,\theta)$,
i.e., pairs in which $S$ is a maximally unramified elliptic maximal 
torus of $G$ and $\theta:S(F)\rightarrow \mathbb{C}^\times$ is a regular 
depth-zero character. We denote by $\pi(S,\theta)$ the regular depth-zero 
supercupidal representation of $G(F)$ associated to elliptic regular pair
$(S,\theta)$
as in \cite{kaletha:regular-sc}*{\S3.4.3}.

\begin{prop}
\label{prop:existence-sc-depth0}
The group $G(F)$ has depth-zero supercuspidal representations.
\end{prop}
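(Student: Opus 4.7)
The plan is to build a depth-zero supercuspidal by the standard compact-induction recipe, with Theorem \ref{thm:existence-cuspidal-finite} supplying the crucial new input. First I would choose a vertex $x$ in the reduced Bruhat--Tits building of $G$ over $F$; then the parahoric $G(F)_{x,0}$ is maximal among parahoric subgroups, and $G(F)_x$ is open and compact modulo the center $Z(F)$. The reductive quotient $\sfG_x$ is a connected reductive $\ff$-group, so Theorem \ref{thm:existence-cuspidal-finite} furnishes an irreducible cuspidal representation $\bar\kappa$ of $\sfG_x(\ff)$. Inflating $\bar\kappa$ along the quotient map $G(F)_{x,0}\twoheadrightarrow \sfG_x(\ff)$ yields an irreducible representation $\kappa$ of $G(F)_{x,0}$.

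Next, I would extend $\kappa$ to the stabilizer $G(F)_x$. Since $G(F)_{x,0}$ is normal in $G(F)_x$ with quotient finite modulo the center, Clifford theory (after fixing a suitable extension of the central character of $\kappa$) produces an irreducible representation $\tilde\kappa$ of $G(F)_x$ whose restriction to $G(F)_{x,0}$ contains $\kappa$. I would then form the compactly induced representation
\[
\pi := \cInd_{G(F)_x}^{G(F)} \tilde\kappa,
\]
which is well-defined because $G(F)_x$ is open in $G(F)$ and compact modulo $Z(F)$.

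The main obstacle is showing that $\pi$ is in fact irreducible and supercuspidal of depth zero; this is precisely the content of the Moy--Prasad theory of depth-zero types, which guarantees that the cuspidality of $\bar\kappa$ together with the maximality of the parahoric $G(F)_{x,0}$ forces $\cInd_{G(F)_x}^{G(F)}\tilde\kappa$ to be an irreducible depth-zero supercuspidal representation of $G(F)$. Once that standard machinery is invoked, the proof reduces entirely to producing $\bar\kappa$, which is exactly what Theorem \ref{thm:existence-cuspidal-finite} provides, with no restrictions on $G$, $F$, or $q$.
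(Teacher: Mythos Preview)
Your proposal is correct and follows essentially the same route as the paper: pick a vertex $x$, apply Theorem~\ref{thm:existence-cuspidal-finite} to obtain a cuspidal representation of $\sfG_x(\ff)$, inflate to $G(F)_{x,0}$, choose any irreducible constituent of the induction to $G(F)_x$, and then invoke Moy--Prasad (specifically \cite{moy-prasad:jacquet}*{Proposition 6.8}) to conclude that the compact induction to $G(F)$ is irreducible, supercuspidal, and of depth zero. The only cosmetic difference is that the paper simply takes any irreducible $\tau$ of $G(F)_x$ containing the inflation, without appealing to Clifford theory or extending central characters.
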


As remarked in \S\ref{sec:intro}, our method
of proof is not new.

\begin{proof}
Let $x$ be a point in the building of $G$ over $F$ whose image
in the reduced building is a vertex.
By Theorem \ref{thm:existence-cuspidal-finite}, 
$\sfG_{x}(\ff)$ admits an irreducible, cuspidal representation
$\overline\rho$.
Let $\rho$ denote the inflation of $\overline\rho$ to $G(F)_{x,0}$.
Let $\tau$ denote any irreducible representation of $G(F)_x$
whose restriction to $G(F)_{x,0}$ contains $\rho$.
From \cite{moy-prasad:jacquet}*{Proposition 6.8},
the representation $\cInd_{G(F)_x}^{G(F)}\tau$ of $G(F)$
is irreducible and supercuspidal, and has depth zero.
\end{proof}

We now turn our attention to \emph{self-dual} supercuspidal representations,
starting with some situations where they do not exist.

\begin{lem}
\label{lemma:PGL-odd}
Suppose that $p$ is odd and
$G$ is an isotropic inner form of $\PGL_{n+1}$ for some even $n$.
Then $G(F)$ has no irreducible, self-dual, supercuspidal representations.
\end{lem}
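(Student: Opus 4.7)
The plan is to lift the question to $\GL_m(D)$, transfer it to $\GL_{n+1}(F)$ via the Jacquet--Langlands correspondence, and appeal to a non-existence result on the split side.

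Writing $G(F) = \PGL_m(D)$, with $D/F$ a central simple division algebra of degree $d$ and $md = n+1$, the isotropy hypothesis gives $m \geq 2$, and since $n+1$ is odd both $m$ and $d$ are odd. Suppose, for a contradiction, that $\pi$ is an irreducible self-dual supercuspidal of $G(F)$. Then its inflation $\tilde\pi$ along the surjection $\GL_m(D) \twoheadrightarrow \PGL_m(D) = G(F)$ is an irreducible self-dual supercuspidal of $\GL_m(D)$ with trivial central character.

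Next I would apply the Jacquet--Langlands correspondence: a bijection between discrete series of $\GL_m(D)$ and of $\GL_{n+1}(F)$ that preserves central characters and commutes with contragredient. Under it, $\tilde\pi$ corresponds to a self-dual discrete series $\pi'$ of $\GL_{n+1}(F)$ with trivial central character; since $m \geq 2$, $\pi'$ is not supercuspidal, and hence $\pi' = \mathrm{St}(\sigma, d)$ is a generalized Steinberg representation built from a supercuspidal $\sigma$ of $\GL_m(F)$. The condition $\pi' \cong (\pi')^\vee$ forces the segment $[\sigma, \sigma\nu^{d-1}]$ to coincide with its dual $[\sigma^\vee\nu^{-(d-1)}, \sigma^\vee]$, equivalently, that $\sigma' := \sigma\nu^{(d-1)/2}$ is self-dual. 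A direct computation of the central character of $\mathrm{St}(\sigma,d)$ yields $\omega_{\pi'} = \omega_{\sigma'}^d$; triviality of $\omega_{\pi'}$, combined with $\omega_{\sigma'}^2 = 1$ from self-duality and with $d$ odd, forces $\omega_{\sigma'} = 1$. Thus $\sigma'$ is a self-dual supercuspidal of $\GL_m(F)$ with trivial central character, where $m$ is an odd integer at least $3$.

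The main obstacle is then to show that no such $\sigma'$ exists when $p$ is odd. Since $p$ is odd, every supercuspidal of $\GL_m(F)$ is tame in the sense of Howe, and by the first-named author's classification of tame self-dual supercuspidals of $\GL_n$ in \cite{adler:self-contra}, no odd-dimensional tame self-dual supercuspidal of $\GL_m(F)$ can have trivial central character. This contradicts the existence of $\sigma'$, and hence of $\pi$.
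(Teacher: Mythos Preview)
Your route differs from the paper's. You Jacquet--Langlands to the split form $\GL_{n+1}(F)$, decompose the image as a generalized Steinberg, and try to reduce to a non-existence statement for tame supercuspidals of a smaller $\GL_r(F)$. The paper instead transfers to the anisotropic inner form $D'^\times$, where $D'$ is the division algebra of degree $n+1$ over $F$, and then applies Prasad's theorem \cite{dprasad:div-alg}*{Proposition 5}: for $p$ and $n+1$ both odd, $D'^\times$ has no irreducible self-dual representation of dimension greater than one. Since a supercuspidal of $\GL_m(D)$ with $m\ge 2$ never corresponds under Jacquet--Langlands to a one-dimensional representation of $D'^\times$, this finishes the argument immediately.

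There is a genuine gap in your approach. The sentence ``since $p$ is odd, every supercuspidal of $\GL_m(F)$ is tame in the sense of Howe'' is false: Howe's construction yields all supercuspidals of $\GL_r(F)$ only when $p\nmid r$, and nothing in the hypotheses prevents $p$ from dividing $m$ here (for instance $p=3$, $m=3$, $d=3$, $n+1=9$). When $p\mid r$ there are wild supercuspidals lying outside the scope of \cite{adler:self-contra}, and your final step does not rule them out. (A lesser issue: the Jacquet--Langlands image $\pi'$ need not be $\mathrm{St}(\sigma,d)$ with $\sigma$ on $\GL_m(F)$ specifically---it is $\mathrm{St}(\sigma,k)$ for \emph{some} divisor $k$ of $n+1$, possibly $k=1$, so you should not claim that $\pi'$ is non-supercuspidal; but since $n+1$ is odd both $k$ and $(n+1)/k$ remain odd, and your reduction survives this correction.) To close the tameness gap you would need non-existence of self-dual supercuspidals of $\GL_r(F)$ for \emph{all} odd $r\ge 3$, tame or wild; the most direct source for that is precisely Prasad's theorem via transfer to the division algebra, which is the paper's argument.
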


\begin{proof}
There exists a
short exact sequence of connected $F$-groups
$$
1 \longrightarrow
\widetilde{Z} \longrightarrow
\widetilde{G} \longrightarrow
G \longrightarrow
1,
$$
where $\widetilde{G}$ is an inner form of $\GL_{n+1}$,
and $\widetilde{Z}$ is isomorphic to $\GL_1$.
Since $H^1(F,\widetilde{Z})$ is trivial,
the map $\widetilde{G}(F) \longrightarrow G(F)$ is thus surjective,
so it will be enough to show that $\widetilde{G}(F)$ has no self-dual
supercuspidal representations.
By \cite{dprasad:div-alg}*{Proposition 5},
a division algebra over $F$ of odd degree has no
irreducible, self-dual representations of dimension more than one.
The Jacquet-Langlands correspondence commutes with taking duals,
and every supercuspidal representation of $\widetilde{G}$
corresponds to a representation of a division algebra of dimension
more than one.
Therefore,
$\widetilde{G}(F)$ has no irreducible, self-dual, supercuspidal representations.
\end{proof}

\begin{prop}
\label{prop:no-self-dual}
Suppose that $p$ is odd and
some $F$-almost-simple factor of $G$ is isotropic,
of type $A_n$ for some even $n$, and an inner form of
a split group.
Then $G(F)$ has no self-dual supercuspidal representations.
\end{prop}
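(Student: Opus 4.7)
The plan is to reduce, via two applications of Lemma \ref{lem:odd-iso}, to Lemma \ref{lemma:PGL-odd} applied over a finite extension of $F$. Let $G_1$ denote the $F$-almost-simple factor of $G$ satisfying the hypothesis. By the conventions on ``type'' in the introduction, $G_1$ is $F$-isogenous to $\Rest_{E/F} H$ for some finite extension $E/F$ and some isotropic inner form $H$ of $\SL_{n+1}$ over $E$; write $\widetilde H$ for its simply connected cover. Since $n$ is even, $n+1$ is odd, so both $\mu_{n+1}$ (the center of $\widetilde H$ over $E$) and $\Rest_{E/F} \mu_{n+1}$ are $F$-groups of odd order.

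First, I would isolate the factor $G_1$, following the isogeny argument at the end of the proof of Theorem \ref{thm:existence-sd-finite}. The aim is to produce a central $F$-isogeny $\zeta \colon \Rest_{E/F} \widetilde H \times H'' \longrightarrow G$ whose kernel has odd order and intersects $H''$ trivially. The recipe: starting from the simply connected cover $G^{\mathrm{sc}} = \prod_i \Rest_{E_i/F} \widetilde G_i$ (with one factor equal to $\Rest_{E/F} \widetilde H$), together with the central torus $Z(G)^\circ$, one mods out by the intersection of the relevant isogeny kernel with everything other than $\Rest_{E/F} \widetilde H$. What remains then injects into $\Rest_{E/F} \mu_{n+1}$, so it has odd order. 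By Lemma \ref{lem:odd-iso}, $G(F)$ admits a self-dual supercuspidal representation if and only if $(\Rest_{E/F} \widetilde H \times H'')(F) = \widetilde H(E) \times H''(F)$ does; and because irreducible supercuspidals of a direct product are outer tensor products and duality respects this factorization, any such self-dual supercuspidal forces its $\widetilde H(E)$-factor to be self-dual. Thus it suffices to show $\widetilde H(E)$ has no self-dual supercuspidals.

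Second, I would pass to the adjoint quotient over $E$. The central $E$-isogeny $\widetilde H \to H^{\mathrm{ad}}$ has kernel $\mu_{n+1}$, of odd order. Applying Lemma \ref{lem:odd-iso} over $E$ reduces the question to showing $H^{\mathrm{ad}}(E)$ has no self-dual supercuspidals. But $H^{\mathrm{ad}}$ is an isotropic inner form of $\PGL_{n+1}$ over $E$, and the residue characteristic of $E$ equals $p$, which is odd by hypothesis; so Lemma \ref{lemma:PGL-odd}, applied over $E$, gives exactly this non-existence, completing the proof. The main obstacle is the first step: the naive isogeny $G^{\mathrm{sc}} \times Z(G)^\circ \to G$ need not have odd-order kernel, and one must carefully trap the potentially even-order part in the ``other'' factor $H''$ so that the residual kernel is confined to the odd-order center $\Rest_{E/F} \mu_{n+1}$ of the distinguished factor.
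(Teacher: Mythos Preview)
Your proposal is correct and follows essentially the same strategy as the paper: reduce via Lemma~\ref{lem:odd-iso} (exploiting that the center of $\SL_{n+1}$ has odd order when $n$ is even) to Lemma~\ref{lemma:PGL-odd} applied over the extension $E$. The paper packages this slightly more economically by constructing a single odd-kernel central isogeny $G \to H \times \Rest_{E/F} G_0$ directly toward the adjoint side (with $G_0$ an inner form of $\PGL_{n+1}$), rather than first mapping from the simply connected side onto $G$ and then separately passing from $\widetilde H$ to $H^{\mathrm{ad}}$; but the content and the key lemmas invoked are the same.
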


\begin{proof}
There exists a central $F$-isogeny
$G\longrightarrow H \times \Rest_{E/F}G_0$,
where $H$ is a connected reductive $F$-group,
$E/F$ is a finite, separable extension,
$G_0$ is an $E$-group that is an inner form of $\PGL_{n+1}$,
and the kernel of the isogeny has odd order.
By Lemma \ref{lem:odd-iso}, it will be enough to show that
$G_0(E)$ has no self-dual supercuspidal representations.
But this follows from Lemma \ref{lemma:PGL-odd}.
\end{proof}

We remark that if $G$ is an anisotropic group of type $A_n$,
then $G(F)$ does have self-dual supercuspidal representations
(e.g., the trivial representation), but they are not regular.

\begin{rem}
\label{rem:max-unram-torus}
Suppose that $G$ is quasi-split
over $F$.
As observed in \cite{kaletha:regular-sc}*{\S3.4}
or \cite{adler-fintzen-varma:kostant}*{\S2.4},
the building of $G$ over $F$ has a vertex $x$
that is ``absolutely special'',
in the sense that it is a special vertex in the building
of $G$ over $E$ for every algebraic extension $E/F$
of finite ramification degree.
Then the root systems of $G$ and $\sfG_x$ are isomorphic.
Let $\sfS$ denote a maximal elliptic $\ff$-torus in $\sfG_x$.
From \cite{kaletha:regular-sc}*{Lemma 3.4.3},
there is a maximally unramified
elliptic $F$-torus
$S$ in $G$ whose parahoric subgroup $S(F)_0$ is $S(F) \cap G(F)_{x,0}$,
and the image of $S(F)_0$ in $\sfG_x(\ff)$ is $\sfS(\ff)$.
We will be particularly interested in the case where $S$ satisfies
the following hypothesis.
\end{rem}

\begin{hyp}
\label{hyp:S-decomp}
Let $S(F)\odd$ be the largest subgroup of $S(F)\bdd$ that contains
$S(F)_0$ with odd index.
Then
$S(F)\odd/S(F)_{0+}$
is a direct factor of
$S(F)\bdd/S(F)_{0+}$.
\end{hyp}

\begin{rem}
\label{rem:hypothesis}
Hypothesis \ref{hyp:S-decomp}
is automatic if 
$S(F)_0/S(F)_{0+}$
is a direct factor of
$S(F)\bdd/S(F)_{0+}$.
Therefore, it is true
for all $S$ in $G$
in each of the following situations:
\begin{enumerate}[(i)]
\item
\label{item:unram}
$G$ splits over an unramified extension.
For in this case, $S(F)\bdd = S(F)_0$.
\item
\label{item:wild}
$G$ splits over a totally wild extension of an unramified extension.
For in this case, $S(F)\bdd/S(F)_0$ is a $p$-group,
and $S(F)_0/S(F)_{0+}$ has order prime to $p$.
\item
$G$ is simply connected.
For in this case,
$S(F)\bdd = S(F)\cap G(F)_x = S(F) \cap G(F)_{x,0} = S(F)_0$.
\item
$G$ is a unitary group.  If $G$ is unramified or wildly ramified,
then this follows from (\ref{item:unram}) or (\ref{item:wild}).
If $G$ is ramified and tame,
then it is easy to see the structure of $S$,
and so one can check directly.
\item
\label{item:p=2}
$p=2$.
In this case $S(F)_0 / S(F)_{0+}$ has odd order, since it is the group of rational points of an $\ff$-torus.
It follows that $S(F)\odd/S(F)_{0+}$ is the maximal subgroup of 
$S(F)\bdd/S(F)_{0+}$ of odd order and therefore the hypothesis is satisfied.
\end{enumerate}
\end{rem}

\begin{prop}
\label{prop:finite-to-local}
Suppose that $G$ is quasi-split over $F$.
Let $x$ be an absolutely special vertex in 
the building of $G$ over $F$.
If $\sfG_x(\ff)$ has
irreducible, cuspidal, Deligne-Lusztig representations,
then $G(F)$ has 
irreducible, depth-zero, supercuspidal regular representations.
Suppose $G$ satisfies Hypothesis \ref{hyp:S-decomp}.
If $\sfG_x(\ff)$ has such representations that are also
self-dual,
then so does $G(F)$.
\end{prop}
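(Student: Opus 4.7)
The plan is to construct $\pi(S, \theta)$ as a regular depth-zero supercuspidal representation of $G(F)$ coming from an elliptic regular pair $(S, \theta)$, via Kaletha's construction. By Remark \ref{rem:max-unram-torus}, there exists a maximally unramified elliptic $F$-torus $S$ in $G$ whose parahoric reduction at $x$ is a prescribed elliptic maximal torus $\sfS$ of $\sfG_x$, with $\sfS(\ff) \cong S(F)_0/S(F)_{0+}$. Given a regular character $\bar\theta$ of $\sfS(\ff)$ yielding an irreducible cuspidal Deligne-Lusztig representation, I inflate $\bar\theta$ to $S(F)_0$ and extend arbitrarily (any extension exists since $\mathbb{C}^\times$ is divisible) to a character $\theta$ of $S(F)$. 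The pair $(S, \theta)$ is then elliptic and regular, yielding the sought-after $\pi(S, \theta)$ and proving the first assertion.

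For the self-duality assertion, assume in addition that $\bar\theta$ is conjugate self-dual: $\bar\theta^w = \bar\theta^{-1}$ for some $w \in W(\sfG_x, \sfS)(\ff)$. Because $\bar\theta$ is regular, $w$ is unique and of order $2$. Lift $w$ to $\tilde w \in N_G(S)(F)$; since $\tilde w^2 \in S(F)$ centralizes $S$, the $\tilde w$-action on $S(F)$ has order dividing $2$, and in particular induces an involution on the character group of any $\tilde w$-stable quotient of $S(F)$. The goal is to choose the extension $\theta$ so that $\theta^{\tilde w} = \theta^{-1}$ on $S(F)$; this will make $(S, \theta)$ and $(S, \theta^{-1})$ conjugate in $G(F)$ (via $\tilde w$), and hence render $\pi(S, \theta)$ self-dual. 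I construct $\theta$ in stages through the tower $S(F)_0 \subset S(F)\odd \subset S(F)\bdd \subset S(F)$.

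For the stage from $\sfS(\ff)$ to $A := S(F)\odd/S(F)_{0+}$, the extensions of $\bar\theta$ form a set of odd cardinality $[A:\sfS(\ff)]$, and the involution $\theta_A \mapsto (\theta_A^{\tilde w})^{-1}$ (well-defined on this set because $(\bar\theta^{w})^{-1} = \bar\theta$) has a fixed point by the parity argument, yielding a conjugate self-dual $\theta_A$ on $A$. For the stage from $A$ to $S(F)\bdd/S(F)_{0+}$, Hypothesis \ref{hyp:S-decomp} provides a decomposition $S(F)\bdd/S(F)_{0+} = A \oplus B$ with $B$ a $2$-group; in each of the scenarios covered by Remark \ref{rem:hypothesis}, either $B$ is trivial or $|A|$ and $|B|$ are coprime, so the decomposition is automatically $\tilde w$-stable, and extending $\theta_A$ by the trivial character on $B$ preserves conjugate self-duality. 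For the final stage from $S(F)\bdd$ to $S(F)$, any initial extension $\theta_0$ produces $\eta := \theta_0^{\tilde w}\theta_0 \in \widehat{S(F)/S(F)\bdd}$ which is automatically $\tilde w$-fixed, and since $(1+\tilde w^*)$ is surjective onto the $\tilde w$-fixed part of $\widehat{S(F)/S(F)\bdd}$ (by divisibility of $\mathbb{C}^\times$ combined with $\tilde w^2 = 1$), any solution $\lambda$ of $(1+\tilde w^*)\lambda = \eta^{-1}$ provides the required modification $\theta := \theta_0 \lambda$. The main obstacle is the middle stage: it is precisely Hypothesis \ref{hyp:S-decomp}, together with the coprime or trivial nature of the complement in the situations of Remark \ref{rem:hypothesis}, that allows the extension to preserve conjugate self-duality.
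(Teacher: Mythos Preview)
Your approach matches the paper's: lift $\sfS$ to a maximally unramified elliptic torus $S \subset G$, inflate $\bar\theta$ to $S(F)_0$, then extend in stages through $S(F)\odd$, $S(F)\bdd$, and $S(F)$, using the odd-index fixed-point argument and Hypothesis~\ref{hyp:S-decomp}. There are, however, genuine gaps.

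First, you assert that $w \in W(\sfG_x, \sfS)(\ff)$ lifts to some $\tilde w \in N_G(S)(F)$, but give no justification. The map $N_G(S)(F)/S(F) \to W(\sfG_x, \sfS)(\ff)$ need not be surjective for an arbitrary vertex; the paper uses precisely that $x$ is \emph{absolutely special} and invokes \cite{kaletha:regular-sc}*{Lemma~3.4.10(3)} to obtain a lift of order two in $W(G,S)$. Second, at the middle stage you argue that the complement $B$ is $\tilde w$-stable only ``in each of the scenarios covered by Remark~\ref{rem:hypothesis}'', whereas the Proposition assumes only Hypothesis~\ref{hyp:S-decomp}; as written you therefore establish a weaker statement than what is claimed. (The paper simply extends trivially on the complement supplied by the Hypothesis.) Third, your final-stage claim that $1+\tilde w^*$ surjects onto the $\tilde w$-fixed part of the character group of $S(F)/S(F)\bdd$ does not follow from divisibility and $\tilde w^2=1$ alone: if $\tilde w$ acted by inversion on a copy of $\mathbb{C}^\times$, the fixed part would be $\{\pm1\}$ while every norm would be trivial. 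What rescues the argument here is that $S$ is elliptic, so the Weyl group acts trivially on $S(F)/S(F)\bdd$ and the norm map reduces to squaring; the paper instead extends trivially along a direct-product splitting $S(F) \cong S(F)\bdd \times \Lambda$.
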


\begin{proof}
Let $\rho$ be an irreducible, Deligne-Lusztig, cuspidal representation
of $\sfG_x(\ff)$.
Then $\rho$ arises from a pair $(\sfS,\bar\theta)$,
where
$\sfS$ is a maximal elliptic $\ff$-torus in $\sfG_x$,
and $\bar\theta$ is a complex character of $\sfS(\ff)$
that is
in general position.
Let $S\subseteq G$ be a maximally unramified elliptic $F$-torus
as in Remark \ref{rem:max-unram-torus},
whose parahoric subgroup $S(F)_0$ is $S(F) \cap G(F)_{x,0}$,
and where the image of $S(F)_0$ in $\sfG_x(\ff)$ is $\sfS(\ff)$.
Inflate $\bar\theta$ to obtain a character of $S(F)_0$.
Choose an extension $\theta$ of this character to $S(F)$.
From \cite{kaletha:regular-sc}*{Lemmas 3.4.6 and 3.4.11},
$\theta|_{S(F)_0}$, and thus $\theta$, has trivial
stabilizer in $N(S,G)(F)/ S(F)$.
From \cite{kaletha:regular-sc}*{Lemma 3.4.18},
we obtain a regular, depth-zero, supercuspidal representation
$\pi_{(S,\theta)}$ of $G(F)$,
as desired.

Now suppose that $\rho$ is also self-dual,
and that $G$ satisfies Hypothesis \ref{hyp:S-decomp}.
Then we can choose $(\sfS,\bar\theta)$ so that $\bar\theta$
is in general position and conjugate self-dual.
In particular, $\bar\theta$ is conjugate to its inverse
via some element in $W(\sfG_x, \sfS)$, necessarily of order two.
Since $x$ is absolutely special,
by \cite{kaletha:regular-sc}*{Lemma 3.4.10(3)},
this implies that the inflation of $\bar\theta$ to $S(F)_0$
is conjugate to its inverse via an element $w$ of $W(G,S)$,
also of order two.
We have an odd number of ways of extending this character to a
character $\theta\odd$ on $S(F)\odd$, so we can and do choose
$\theta\odd$ so that it is conjugate to its inverse
via $w \in W(G,S)$.
From Hypothesis \ref{hyp:S-decomp},
we may extend $\bar\theta$ in a trivial way to obtain
a character of $S(F)\bdd$.
Since $S(F)$ is a direct product of $S(F)\bdd$ and an integer
lattice,
we may further extend our character in a trivial way
to a character $\theta$ of $S(F)$.
We have constructed $\theta$ to be conjugate to its inverse,
so the representation
$\pi_{(S,\theta)}$
is self-dual.
\end{proof}

\begin{prop}
\label{prop:finite-to-local-sc}
Suppose that $G$ is a simply connected $F$-group.
If the building of $G(F)$ has a vertex $x$ such that
$\sfG_x(\ff)$ has an irreducible, self-dual, cuspidal representation,
then $G(F)$ has an irreducible, self-dual, supercuspidal representation.
\end{prop}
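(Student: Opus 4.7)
The plan is to exhibit a self-dual supercuspidal representation by compact induction from the stabilizer of $x$. First, because $G$ is simply connected, Bruhat-Tits theory gives $G(F)_x = G(F)_{x,0}$, so the vertex stabilizer is already the parahoric and is compact open in $G(F)$. Pulling back the hypothesized self-dual cuspidal representation $\bar\rho$ of $\sfG_x(\ff)$ along the projection $G(F)_{x,0}\twoheadrightarrow\sfG_x(\ff)$ produces a smooth, finite-dimensional representation $\rho$ of $G(F)_x$ which inherits self-duality from $\bar\rho$.

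Next, I would set $\pi := \cInd_{G(F)_x}^{G(F)} \rho$ and apply the same argument used in the proof of Proposition~\ref{prop:existence-sc-depth0}, invoking \cite{moy-prasad:jacquet}*{Proposition~6.8}, to conclude that $\pi$ is irreducible, supercuspidal, and of depth zero.

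The remaining task is to verify self-duality of $\pi$. The natural tool is a nondegenerate $G(F)_x$-invariant bilinear pairing $B$ on the space of $\rho$, from which one produces a $G(F)$-invariant pairing on $\pi$ by the formula
\[
\tilde{B}(f_1,f_2) \;:=\; \sum_{g\,\in\, G(F)_x\backslash G(F)} B\bigl(f_1(g),f_2(g)\bigr),
\]
the sum being finite because each $f_i$ has compact support modulo $G(F)_x$. Invariance under right translation by $G(F)$ is immediate, and nondegeneracy is inherited coset by coset from that of $B$. The pairing $\tilde{B}$ yields a nonzero $G(F)$-equivariant map $\pi\to\pi^\vee$, which is an isomorphism by Schur's lemma since both $\pi$ and $\pi^\vee$ are irreducible.

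There is no real obstacle in the argument: everything reduces to the standard principle that compact induction of a self-dual finite-dimensional representation from a compact open subgroup is again self-dual, with irreducibility and supercuspidality supplied by the Moy-Prasad theory. The only point deserving care is the identification $G(F)_x = G(F)_{x,0}$, which is precisely where the simple-connectedness of $G$ enters.
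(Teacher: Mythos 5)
Your proposal is correct and follows the same route the paper does: inflate the self-dual cuspidal $\bar\rho$ to the parahoric $G(F)_{x,0}$ (which coincides with the vertex stabilizer $G(F)_x$ precisely because $G$ is simply connected), compactly induce, and invoke Moy--Prasad to get irreducibility, supercuspidality, and depth zero. The paper states the self-duality of the induced representation without further comment; you spell it out by transporting a nondegenerate $G(F)_x$-invariant pairing to a $G(F)$-invariant pairing on the compactly induced module and then applying Schur's lemma, which is the standard justification and is sound.
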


\begin{proof}
Let $\rho$ be an irreducible, self-dual cuspidal representation
of $\sfG_x(\ff)$.
Inflate $\rho$ 
to the parahoric subgroup $G(F)_{x,0}$ of $G(F)$, and induce to $G(F)$.
From \cite{moy-prasad:jacquet}*{Proposition 6.8},
we obtain an irreducible, supercuspidal representation $\pi$.
Since $\rho$ is self-dual, so is $\pi$.
\end{proof}

\begin{prop}
\label{prop:gen-to-qsplit}
Let $G$ be a connected reductive $F$-group,
and let $G_0$ be its quasi-split inner form.
If $G_0(F)$ admits an irreducible, regular
(resp.\ self-dual regular) supercuspidal representation of
depth zero, then so does $G(F)$.
\end{prop}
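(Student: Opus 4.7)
The plan is to transfer an elliptic regular pair from $G_0$ to $G$ via an admissible embedding of tori, preserving regularity (and, when desired, self-duality), and then to invoke the parametrization of regular depth-zero supercuspidals recalled just before Proposition \ref{prop:existence-sc-depth0}. Fixing an inner twist $\psi\colon G_0 \longrightarrow G$, I would start from an elliptic regular pair $(S_0,\theta_0)$ for $G_0$ giving rise to the representation $\pi(S_0,\theta_0)$ of $G_0(F)$.

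Next I would produce a transfer of tori. Because $S_0$ is an elliptic maximal $F$-torus, a standard cohomological argument applied inside $G_0^{\mathrm{ad}}$ should supply some $g \in G(\overline{F})$ such that $\psi \circ \mathrm{Int}(g)$ restricts to an $F$-rational homomorphism on $S_0$. This would yield an elliptic maximal $F$-torus $S := \psi(\mathrm{Int}(g)\,S_0) \subset G$ together with an $F$-isomorphism $j\colon S_0 \overset{\sim}{\longrightarrow} S$. Ellipticity and the maximally unramified property of $S_0$ would then transfer across $j$ automatically, since both are determined by the Galois action on the (co)character lattice, which is intact.

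I would then set $\theta := \theta_0 \circ j^{-1}\colon S(F) \to \mathbb{C}^\times$. Because the inner twist acts trivially on root data, $j$ should identify the rational Weyl group $W(G_0,S_0)(F)$ with $W(G,S)(F)$. Consequently $\theta$ is regular in the sense of \cite{kaletha:regular-sc}*{Definition 3.4.16} precisely when $\theta_0$ is, depth zero is preserved because $j$ is $F$-rational, and $(S,\theta)$ is an elliptic regular pair for $G$; it therefore produces a regular depth-zero supercuspidal $\pi(S,\theta)$ of $G(F)$. If in addition $\theta_0$ is conjugate to $\theta_0^{-1}$ by some $w \in W(G_0,S_0)(F)$, then the image of $w$ under the above identification conjugates $\theta$ to $\theta^{-1}$, so that $(S,\theta)$ is $G(F)$-conjugate to $(S,\theta^{-1})$, and hence $\pi(S,\theta)$ is self-dual.

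The hard part will be the very first step: the existence of the admissible embedding $j$. Ellipticity of $S_0$ should be essential, since for a non-elliptic torus the obstruction in a suitable Galois cohomology group to adjusting $\psi$ rationally on $S_0$ need not vanish; for elliptic tori this obstruction vanishes by a Steinberg-type argument. Once this embedding is in hand the remainder is formal, because regularity and self-duality are properties of $(S,\theta)$ depending only on the Weyl-group action on the torus, and are therefore transported by $j$.
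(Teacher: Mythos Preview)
Your proposal is correct and follows essentially the same route as the paper: start from the elliptic regular pair $(S_0,\theta_0)$, transfer the elliptic torus to $G$ via an admissible embedding (stable conjugacy), transport $\theta_0$ along the resulting $F$-isomorphism, and observe that the identification of rational Weyl groups carries regularity and conjugate self-duality across. The only difference is that where you sketch the cohomological ``Steinberg-type'' argument for the existence of the embedding, the paper simply cites \cite{kazhdan-varshavsky:depth-zero}*{Lemma~1.5.1} for the statement that an elliptic maximal torus in $G_0$ has a stable conjugate in $G$.
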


\begin{proof}
Let $\pi$ be such a representation of $G_0(F)$.
Let $\pi\cong \pi_{(S_0,\theta_0)}$ for some maximally unramified
maximal $F$-elliptic torus $S_0\subset G_0$
and some depth-zero complex character $\theta_0$ of $S_0(F)$
that is in general position with respect to the action
of the Weyl group $W(G_0,S_0)(F)$
(and is conjugate self-dual if $\pi$ is assumed self-dual).

By \cite{kazhdan-varshavsky:depth-zero}*{Lemma 1.5.1},
there is a maximal elliptic torus
$S\subset G$ that is stably conjugate to $S_0$.
We thus have that $S$ and $S_0$ are $F$-isomorphic,
as are $W(G,S)$ and $W(G_0,S_0)$.
Therefore, $S(F)$ has a depth-zero complex character $\theta$
that is in general position with respect to the action
of the Weyl group $W(G,S)(F)$.
If $\theta_0$ is conjugate self-dual, then so is $\theta$.
\end{proof}

\begin{hyp}
\label{hyp:SU-restrictions-padic}
The group $G$ has no $F$-almost-simple factor
isogenous to the unitary group $\Rest_{E/F}\SU_{k+1}$, where
$E/F$ is totally ramified, 
and the unitary group is defined with respect
to an unramified quadratic extension of $F$, and
\begin{enumerate}[(a)]
\item
$k=2$ and $q=2$;
\item
$k=2$ and $q\in{\{3,4\}}$; or $k=3$ and $q\in{\{2,3,5\}}$;
or $k=4$ and $q\in{\{2,3, 4,5\}}$.
\end{enumerate}
\end{hyp}

\begin{thm}
\label{thm:self-dual-sc}
Let $G$ be a connected reductive $F$-group.
\begin{enumerate}[(a)]
\item
\label{item:exist-local-regular}
If $G$ satisfies Hypothesis \ref{hyp:SU-restrictions-padic}(a),
then
$G(F)$ has
irreducible, regular, supercuspidal representations of depth zero.
\item
\label{item:exist-local-regular-sd}
If $G$ also satisfies
Hypotheses
\ref{hyp:SU-restrictions-padic}(b)
and
\ref{hyp:S-decomp}
(the latter for all maximally unramified elliptic tori $S\subset G$),
and $G$ has no $F$-almost-simple factors of type
$A_n$ ($n$ even),
then
$G(F)$ has
irreducible, \emph{self-dual},
regular, supercuspidal representations of depth zero.
\end{enumerate}
\end{thm}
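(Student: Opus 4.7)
The plan is to reduce the theorem to the finite-field setting already handled by Theorem \ref{thm:existence-sd-finite}, by passing to the reductive quotient at an absolutely special vertex.

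First, by Proposition \ref{prop:gen-to-qsplit}, it suffices in both (a) and (b) to treat the case in which $G$ is quasi-split over $F$; Hypotheses \ref{hyp:SU-restrictions-padic} and \ref{hyp:S-decomp}, as well as the absence of even type-$A_n$ factors, all depend only on the inner class of $G$ (Hypothesis \ref{hyp:S-decomp} is invariant because inner twisting does not change the $F$-isomorphism class of maximally unramified elliptic tori, up to stable conjugacy). Assuming $G$ quasi-split, I would choose an absolutely special vertex $x$ in the building of $G$ over $F$, which exists by Remark \ref{rem:max-unram-torus}, and form the reductive quotient $\sfG_x$ over $\ff$. Proposition \ref{prop:finite-to-local} then reduces part (a) to the existence of an irreducible cuspidal Deligne-Lusztig representation of $\sfG_x(\ff)$, and reduces part (b) to the existence of an irreducible, self-dual, cuspidal Deligne-Lusztig representation of $\sfG_x(\ff)$. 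Both are supplied by Theorem \ref{thm:existence-sd-finite}, provided the corresponding hypotheses there hold for $\sfG_x$.

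The main obstacle is translating Hypothesis \ref{hyp:SU-restrictions-padic} on $G$ into Hypothesis \ref{hyp:SU-restrictions} on $\sfG_x$, and likewise translating ``no $F$-almost-simple factor of type $A_n$ with $n$ even'' into the analogous condition on $\sfG_x$. I would analyze each $F$-almost-simple factor of (the quasi-split $G$), which has the form $\Rest_{E/F}H_0$ with $H_0$ absolutely almost simple and quasi-split over $E$; let $e$ and $f$ be the ramification index and residue degree of $E/F$. Since $x$ is absolutely special, the corresponding factor of $\sfG_x$ takes the form $\Rest_{\ff_f/\ff}\sfH_0$, where $\sfH_0$ is a connected reductive $\ff_f$-group whose absolute root system agrees with that of $H_0$ and whose $*$-action is inherited from $H_0$ via the reduction map. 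Consequently, an $F$-factor of type $A_n$ produces $\ff$-factors of type $A_n$, preserving the ``no even $A_n$'' hypothesis, while an $F$-factor of type ${}^2A_k$, which by our local convention splits over an unramified quadratic extension of $E$, produces an $\ff$-factor of finite type ${}^2A_k(q^f)$.

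This last observation is precisely what pins down the formulation of Hypothesis \ref{hyp:SU-restrictions-padic}: the forbidden finite-group types ${}^2A_k(q)$ with small $q$ and $k$ arise as factors of $\sfG_x$ exactly when $f = 1$ (equivalently, $E/F$ is totally ramified) and the unitary structure on $H_0$ comes from an unramified quadratic extension of $F$, which is the situation excluded by Hypothesis \ref{hyp:SU-restrictions-padic}. Once this case analysis is complete, the hypotheses on $G$ translate cleanly into the required finite-field hypotheses on $\sfG_x$, and both parts of the theorem follow directly from the two propositions cited above. Part (b) additionally invokes Hypothesis \ref{hyp:S-decomp} inside Proposition \ref{prop:finite-to-local} in order to promote the self-duality of the character $\bar\theta$ of $\sfS(\ff)$ to a self-dual extension $\theta$ on $S(F)$; this is the only place where Hypothesis \ref{hyp:S-decomp} is used.
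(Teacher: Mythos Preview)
Your overall architecture is exactly the paper's: reduce to the quasi-split inner form via Proposition \ref{prop:gen-to-qsplit}, choose an absolutely special vertex $x$, and then invoke Proposition \ref{prop:finite-to-local} together with Theorem \ref{thm:existence-sd-finite}. The only substantive work is the translation of Hypothesis \ref{hyp:SU-restrictions-padic} and the ``no even $A_n$'' condition from $G$ to $\sfG_x$, and this is where your argument has a gap.

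Your assertion that ``$\sfH_0$ is a connected reductive $\ff_f$-group whose absolute root system agrees with that of $H_0$'' is correct only when $H_0$ splits over an unramified extension of $E$. At an absolutely special vertex the absolute root system of the reductive quotient coincides with the \emph{relative} root system of $H_0$ over the maximal unramified extension $E^{\mathrm{nr}}$; when $H_0$ is genuinely ramified these differ. Concretely, a tamely ramified $\SU_3$ factor of $G$ contributes a factor of type $A_1$ (not $A_2$) to $\sfG_x$. Worse, your own recipe for the $*$-action would, if the root-system claim held, assign trivial $*$-action to such a factor (since the ramified quadratic Galois group has trivial image in $\Gal(\bar\ff/\ff)$), producing a split $A_2$ factor in $\sfG_x$ and thereby blocking the application of Theorem \ref{thm:existence-sd-finite} in part (b)---even though ramified $\SU_3$ is permitted by the hypotheses of the theorem.

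The paper avoids this by treating the two cases separately: if the factor $H$ splits over an unramified extension, then $H$ and $\sfH_x$ have the same type; if $H$ splits only over a ramified extension, one appeals to \cite{haines-rostami:satake}*{Lemma 5.0.1} to see that the $\ff$-Weyl group of $\sfH_x$ equals the relative Weyl group of $H$, whence $\sfH_x$ is not simply laced and in particular not of type $A_n$ ($n\ge2$) or ${}^2A_k$ ($k\ge2$). You need to add exactly this second case to complete the verification.
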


\begin{proof}
Let $G_0$ be the quasi-split inner form of $G$.
It is clear that $G_0$ satisfies the various parts
of Hypothesis \ref{hyp:SU-restrictions-padic}
if and only if $G$ does,
and the same goes for Hypothesis \ref{hyp:S-decomp}.
From Proposition \ref{prop:gen-to-qsplit}, we may replace $G$
by $G_0$, and assume from now on that $G$ is quasi-split.

Let $x$ be an absolutely special vertex in the building of $G(F)$.
Our result will follow from
Proposition \ref{prop:finite-to-local}
and Theorem \ref{thm:existence-sd-finite},
provided that 
we can show that $\sfG_x$ satisfies Hypothesis \ref{hyp:SU-restrictions}
and that $\sfG_x$ has a factor of type $A_n$ ($n$ even)
if and only if $G$ does.

The decomposition of $G$ into an almost-direct product of
a torus and $F$-almost-simple factors induces
an analogous decomposition of $\sfG_x$.

Suppose that $H$ is a factor of $G$,
and $\sfH_x$ is the corresponding factor of $\sfG_x$.
(Here we are identifying $x$ with its projection in the building of $H(F)$.)
Note that the connected reductive quotient of $(R_{E/F} H)(F)_{x,0}$ 
is the group of $\ff$-points of $\sfH_x$ if $E/F$ is totally ramified,
and of $R_{\ff_E/\ff} \sfH_x$ if $E/F$ is unramified
(and $\ff_E$ denotes the residue field of $E$).
Thus, we may assume that $H$ is absolutely almost simple.
If $H$ splits over an unramified extension, then $H$ and $\sfH_x$
have the same type
(e.g., $A_n$, $^2D_n$, etc.).
Suppose that $H$ splits only over a ramified extension.
From the proof of
\cite{haines-rostami:satake}*{Lemma 5.0.1},
the Weyl group of $\sfH_x$ over $\ff$ is isomorphic to the
relative Weyl group $W(H,T_0)$, where $T_0$ is a maximal $F$-split
torus in $H$.
In particular, $\sfH_x$ cannot be a simply laced group,
and so cannot have type $A_n$ or $^2A_n$.
\end{proof}

\begin{thm}
\label{thm:self-dual-sc-even}
Suppose that $G$ is a connected reductive $F$-group and $p=2$.
If $q=2$, then assume that $G$ has no factor of type $^2A_3$
or $^2A_4$.
Then $G(F)$ admits irreducible self-dual supercuspidal representations.
\end{thm}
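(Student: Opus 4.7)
The plan is to reduce to the case of an $F$-almost-simple group and handle each type, combining Theorem \ref{thm:self-dual-sc} with the characteristic-$2$ input of Bushnell-Henniart \cite{BH2011}. First, by an analog of Lemma \ref{lem:ss} in the $p$-adic setting (pulling back along the surjection $G(F) \twoheadrightarrow (G/Z^\circ)(F)$, which is surjective by Lang's theorem applied to $Z^\circ$ over the residue field, or by the usual splitting for induced tori in characteristic zero), I would reduce to $G$ semisimple. Writing the simply connected cover of $G$ as $\prod_i \Rest_{E_i/F}\widetilde{G}_i$ and applying Lemma \ref{lem:odd-iso} and Lemma \ref{lem:trivial-iso} to descend to $G$, it suffices to produce self-dual supercuspidals for each $F$-almost-simple factor $\Rest_{E/F}H$ separately, with $H$ absolutely almost simple over $E$.

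For $H$ neither of type $A_n$ (for even $n$) nor of a $^2A_k$ type excluded by Hypothesis \ref{hyp:SU-restrictions-padic}, Theorem \ref{thm:self-dual-sc}(\ref{item:exist-local-regular-sd}) directly supplies self-dual regular depth-zero supercuspidal representations: crucially, $p=2$ forces Hypothesis \ref{hyp:S-decomp} to hold automatically by Remark \ref{rem:hypothesis}(\ref{item:p=2}). Under our hypothesis on $G$, the excluded $^2A_k$ types remaining to handle are $^2A_2(2)$ (when $q=2$) and $^2A_k(4)$ for $k\in\{2,3,4\}$ (when $q=4$). For each of these I would apply Lemma \ref{lem:U-crude} and Remark \ref{rem:SU-small} to produce a self-dual cuspidal representation of the relevant finite reductive quotient $\sfG_x(\ff)$, and then lift to a self-dual depth-zero supercuspidal of $G(F)$ via Proposition \ref{prop:finite-to-local-sc} in the simply connected case, and via the isogeny lemmas for other isogenous forms.

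The remaining and principal case is $H$ of type $A_n$ with $n$ even. Since $n+1$ is odd, every inner form of $\GL_{n+1}$ has the shape $\GL_r(D)$ with $rd = n+1$ odd, so the division algebra $D$ has odd degree $d$, and \cite{BH2011} (together with a Jacquet-Langlands transfer, as employed in the proof of Lemma \ref{lemma:PGL-odd}) yields a self-dual supercuspidal representation $\pi$ of $\GL_r(D)(F)$. The restriction of $\pi$ to $\SL_r(D)(F)$ is a finite self-dual direct sum of supercuspidals, so by a parity argument on duality-orbits at least one component is self-dual. Lemma \ref{lem:odd-iso} then transfers self-duality through the remaining isogenies among forms of type $A_n$, all of whose kernels have $F$-points of order dividing $n+1$ and hence odd. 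The main obstacle is precisely this $A_n$ case: it is ruled out for $p$ odd by Proposition \ref{prop:no-self-dual}, and becomes possible at $p=2$ only because of \cite{BH2011}, but one must track self-duality carefully through the restriction from $\GL_r(D)(F)$ to $\SL_r(D)(F)$ and through the descent across the various isogenies connecting the different forms of type $A_n$.
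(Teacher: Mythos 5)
Your overall plan matches the paper's in spirit (isolate the problematic simple factors, handle the generic factors via Theorem \ref{thm:self-dual-sc} using Remark \ref{rem:hypothesis}(\ref{item:p=2}), then treat the type-$A_n$ factors and the small unitary factors separately), and your observations about $p=2$ and about the odd centers of $\SL_{n+1}$ are exactly the right ones. But there are two genuine problems with how you set up the reduction, and a couple of minor slips.

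The main gap is your first reduction. You propose to pull back from $(G/Z^\circ)(F)$ along a claimed surjection $G(F)\twoheadrightarrow(G/Z^\circ)(F)$, justifying surjectivity by ``Lang's theorem applied to $Z^\circ$ over the residue field.'' This is not valid over a local field: the cokernel of $G(F)\to(G/Z^\circ)(F)$ is $\ker\bigl(H^1(F,Z^\circ)\to H^1(F,G)\bigr)$, and $H^1(F,Z^\circ)$ is nontrivial for a general $F$-torus (e.g.\ for a norm-one torus of a quadratic extension); Lang's theorem does not descend from $\ff$ to $F$. Nor is $Z^\circ$ induced in general, so Shapiro/Hilbert~90 does not save you. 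The paper does not pass through the semisimple case at all. It uses Lemma \ref{lem:odd-iso} \emph{once} and directly: since the almost-simple factors in question (type $A_n$ with $n$ even, and $^2A_2$) have centers whose $F$-points are of odd order, there is a central $F$-isogeny with odd kernel relating $G$ to a product $H\times G_0$ in which $G_0$ collects exactly those factors; Lemma \ref{lem:odd-iso} then transfers the existence question to $H\times G_0$. This avoids the $Z^\circ$-quotient entirely.

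For the type-$A_n$ factors, you invoke \cite{BH2011} and restrict from $\GL_r(D)$ to $\SL_r(D)$; the paper instead cites \cite{adler:self-contra}*{Theorem~6.1} for $\GL_{n+1}(E)$, restricts to $\SL_{n+1}(E)$, and transfers to inner forms by Jacquet-Langlands. Both routes are legitimate — \cite{BH2011} is exactly the dyadic input the introduction mentions — so this is a genuine but inessential alternative. Two smaller slips: your list of excluded unitary types at $q=4$ incorrectly includes $^2A_3(4)$, which Hypothesis \ref{hyp:SU-restrictions-padic}(b) does not exclude (that clause requires $q\in\{2,3,5\}$ for $k=3$); and Lemma \ref{lem:U-crude}(\ref{item:SUn}) does not directly apply to $\SU(3)(\ff)$ at $q=2$, since it requires $q\geq n$. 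The paper bridges the $\SU_3$ case via Proposition \ref{prop:finite-to-local-sc} together with Remark \ref{rem:SU-small} (either part (a), which goes through $\PU(3)$ and an odd isogeny, or part (c), the unipotent cuspidal), not via Lemma \ref{lem:U-crude}(\ref{item:SUn}) alone.
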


\begin{rem}
At present, ``regular'' supercuspidal representations of positive depth
have not been defined
when $F$ has residual characteristic two.
Perhaps in the future they will be constructed from characters in general
position, as in the case of odd residual characteristic.
But even should that happen,
our proof will not be able to show that
all such groups admit \emph{regular} supercuspidals,
because of its reliance on 
Lemma \ref{lem:odd-iso}
and (when $q$ is small) on the existence of unipotent
cuspidal representations of $\SU(3)(\ff)$.
\end{rem}

\begin{proof}[Proof of Theorem \ref{thm:self-dual-sc-even}]
From Lemma \ref{lem:odd-iso}, we may replace $G$
by a direct product
$H\times G_0$, where
$G_0$ is a direct product of inner forms of groups of the form
$\Rest_{E/F}\SL_{n+1}$ ($n$ even),
for finite separable field extensions $E/F$;
and
$\Rest_{E/F}\SU_3$
for finite, separable, totally ramified field extensions $E/F$,
and the unitary groups are defined with respect to the quadratic
unramifed extension of $F$;
and no simple factor of $H$ has any of these types.
From Theorem \ref{thm:self-dual-sc}(\ref{item:exist-local-regular-sd}),
and Remark \ref{rem:hypothesis}(\ref{item:p=2}),
$H(F)$ has self-dual supercuspidal representations.
Therefore,
it will be enough to show that the same is true for inner forms
of
$\SL_{n+1}(E)$, and $\SU_3(E)$.

From
\cite{adler:self-contra}*{Theorem 6.1},
$\GL_{n+1}(E)$ has self-dual supercuspidal representations,
and since the restriction of such a representation to $\SL_{n+1}(E)$
decomposes into an odd number of summands,
at least one of them must be self-dual.
By the Jacquet-Langlands correspondence, 
the same is true for inner forms.

The groups $\SU_n$ (for $n$ odd) have no non-quasi-split inner forms.
To obtain self-dual supercuspidal representations of
$\SU_n(E)$, 
Proposition \ref{prop:finite-to-local-sc}
shows that it is enough to obtain
an irreducible, self-dual cuspidal representation of
$\SU_n(\ff_E)$,
where $\ff_E$ is the residue field of $E$.
Remark \ref{rem:SU-small} provides such a representation when
$n=3$.
\end{proof}

\begin{rem}
We have not determined whether or not $\SU(5)(\ff)$
has an irreducible self-dual cuspidal representation
when $\ff$ has order $2$.
If it does, then in Theorem \ref{thm:self-dual-sc-even},
we need not exclude groups containing a factor of type $^2A_4$
when $q=2$,
because we can deal with such factors in the same way that we
dealt with factors of type $^2A_2$,
changing only a few words of the proof.
\end{rem}

\begin{bibdiv}
\begin{biblist}

\bib{adler:self-contra}{article}{
  author={Adler, Jeffrey D.},
  title={Self-contragredient supercuspidal representations of ${\rm GL}\sb n$},
  journal={Proc. Amer. Math. Soc.},
  volume={125},
  date={1997},
  number={8},
  pages={2471\ndash 2479},
  issn={0002-9939},
  review={\MR {1376746 (97j:22038)}},
}

\bib{adler-fintzen-varma:kostant}{article}{
  author={Adler, Jeffrey D.},
  author={Fintzen, Jessica},
  author={Varma, Sandeep},
  title={On Kostant sections and topological nilpotence},
  journal={J. London Math. Soc.},
  volume={97},
  number={2},
  year={2018},
  pages={325\ndash 351},
  eprint={arXiv:1611.08566},
}

\bib{beuzart-plessis:sc}{article}{
  author={Beuzart-Plessis, Rapha\"el},
  title={A short proof of the existence of supercuspidal representations for all reductive $p$-adic groups},
  journal={Pacific J. Math.},
  volume={282},
  date={2016},
  number={1},
  pages={27--34},
  issn={0030-8730},
  review={\MR {3463423}},
  doi={10.2140/pjm.2016.282.27},
}

\bib{birkhoff-vandiver}{article}{
  author={Birkhoff, Geo. D.},
  author={Vandiver, H. S.},
  title={On the integral divisors of $a^n-b^n$},
  journal={Ann. of Math. (2)},
  volume={5},
  date={1904},
  number={4},
  pages={173--180},
  issn={0003-486X},
  review={\MR {1503541}},
  doi={10.2307/2007263},
}

\bib{BH2011}{article}{
    AUTHOR = {Bushnell, Colin J.},
    author= {Henniart, Guy},
     TITLE = {Self-dual representations of some dyadic groups},
   JOURNAL = {Math. Ann.},
    VOLUME = {351},
      YEAR = {2011},
    NUMBER = {1},
     PAGES = {67--80},
      ISSN = {0025-5831},
       DOI = {10.1007/s00208-010-0592-5},
       URL = {https://doi.org/10.1007/s00208-010-0592-5},
}

\bib{bushnell-henniart:gl2-book}{book}{
  author={Bushnell, Colin J.},
  author={Henniart, Guy},
  title={The local Langlands conjecture for $\rm GL(2)$},
  series={Grundlehren der Mathematischen Wissenschaften [Fundamental Principles of Mathematical Sciences]},
  volume={335},
  publisher={Springer-Verlag, Berlin},
  date={2006},
  pages={xii+347},
  isbn={978-3-540-31486-8},
  isbn={3-540-31486-5},
  review={\MR {2234120 (2007m:22013)}},
  doi={10.1007/3-540-31511-X},
}

\bib{carter:conjugacy-weyl}{article}{
  author={Carter, Roger W.},
  title={Conjugacy classes in the Weyl group},
  journal={Compositio Math.},
  volume={25},
  year={1972},
  pages={1--59},
  issn={0010-437X},
  review={\MR {0318337}},
}

\bib{carter:finite}{book}{
  author={Carter, Roger W.},
  title={Finite groups of Lie type},
  series={Wiley Classics Library},
  publisher={John Wiley \& Sons Ltd.},
  place={Chichester},
  date={1993},
  pages={xii+544},
  isbn={0-471-94109-3},
  review={\MR {1266626 (94k:20020)}},
}

\bib{dat-orlik-rapoport:period-domains}{book}{
  author={Dat, Jean-Fran\c {c}ois},
  author={Orlik, Sascha},
  author={Rapoport, Michael},
  title={Period domains over finite and $p$-adic fields},
  series={Cambridge Tracts in Mathematics},
  volume={183},
  publisher={Cambridge University Press, Cambridge},
  date={2010},
  pages={xxii+372},
  isbn={978-0-521-19769-4},
  review={\MR {2676072}},
  doi={10.1017/CBO9780511762482},
}

\bib{digne-michel:reps-book}{book}{
  author={Digne, Fran{\c {c}}ois},
  author={Michel, Jean},
  title={Representations of finite groups of Lie type},
  series={London Mathematical Society Student Texts},
  volume={21},
  publisher={Cambridge University Press},
  place={Cambridge},
  date={1991},
  pages={iv+159},
  isbn={0-521-40117-8},
  isbn={0-521-40648-X},
  review={\MR {1118841 (92g:20063)}},
}

\bib{haines-rostami:satake}{article}{
   author={Haines, Thomas J.},
   author={Rostami, Sean},
   title={The Satake isomorphism for special maximal parahoric Hecke
   algebras},
   journal={Represent. Theory},
   volume={14},
   date={2010},
   pages={264--284},
   issn={1088-4165},
   review={\MR{2602034}},
   doi={10.1090/S1088-4165-10-00370-5},
}

\bib{howe77}{article}{
    AUTHOR = {Howe, Roger E.},
     TITLE = {Tamely ramified supercuspidal representations of {${\rm
              Gl}_{n}$}},
   JOURNAL = {Pacific J. Math.},
    VOLUME = {73},
      YEAR = {1977},
    NUMBER = {2},
     PAGES = {437--460},
      ISSN = {0030-8730},
       URL = {http://projecteuclid.org/euclid.pjm/1102810618},
}

\bib{humphreys:reflection}{book}{
  author={Humphreys, James E.},
  title={Reflection groups and Coxeter groups},
  series={Cambridge Studies in Advanced Mathematics},
  volume={29},
  publisher={Cambridge University Press},
  place={Cambridge},
  date={1990},
  pages={xii+204},
  isbn={0-521-37510-X},
  review={\MR {1066460 (92h:20002)}},
}

\bib{kaletha:regular-sc}{article}{
   author={Kaletha, Tasho},
   title={Regular supercuspidal representations},
   journal={J. Amer. Math. Soc.},
   volume={32},
   year={2019},
   number={4},
   pages={1071\ndash 1170},
   doi={10.1090/jams/925},
   eprint={arXiv:1602.03144},
}

\bib{kazhdan-varshavsky:depth-zero}{article}{
	author={Kazhdan, David},
	author={Varshavsky, Yakov},
	title={Endoscopic decomposition of certain
		depth zero representations},
booktitle={Studies in Lie Theory},
series={Progress in Mathematics},
volume={243},
editor={Bernstein, J.},
editor={Hinich, V.},
editor={Melnikov, A.},
pages={223\ndash 301},
publisher={Birkh\"auser Boston},
year={2006},
eprint={arXiv:math.RT/0309307},
}

\bib{moy-prasad:jacquet}{article}{
  author={Moy, Allen},
  author={Prasad, Gopal},
  title={Jacquet functors and unrefined minimal $K$-types},
  journal={Comment. Math. Helv.},
  volume={71},
  date={1996},
  number={1},
  pages={98\ndash 121},
  issn={0010-2571},
  review={\MR {1371680 (97c:22021)}},
}

\bib{dprasad:div-alg}{article}{
  author={Prasad, Dipendra},
  title={Some remarks on representations of a division algebra and of the Galois group of a local field},
  journal={J. Number Theory},
  volume={74},
  date={1999},
  number={1},
  pages={73--97},
  issn={0022-314X},
  review={\MR {1670568}},
  doi={10.1006/jnth.1998.2289},
}

\bib{silberger:isogeny-restriction}{article}{
  author={Silberger, Allan J.},
  title={Isogeny restrictions of irreducible admissible representations are finite direct sums of irreducible admissible representations},
  journal={Proc. Amer. Math. Soc.},
  volume={73},
  date={1979},
  number={2},
  pages={263\ndash 264},
  issn={0002-9939},
  review={\MR {516475 (80f:22017)}},
}

\bib{springer:regular-elements}{article}{
  author={Springer, T. A.},
  title={Regular elements of finite reflection groups},
  journal={Invent. Math.},
  volume={25},
  date={1974},
  pages={159--198},
  issn={0020-9910},
  review={\MR {0354894}},
  doi={10.1007/BF01390173},
}

\bib{steinberg:endomorphisms}{book}{
  author={Steinberg, Robert},
  title={Endomorphisms of linear algebraic groups},
  series={Memoirs of the American Mathematical Society, No. 80},
  publisher={American Mathematical Society},
  place={Providence, R.I.},
  date={1968},
  pages={108},
  review={\MR {0230728}},
}

\bib{reeder:torsion}{article}{
  title={Torsion automorphisms of simple Lie algebras},
  author={Reeder, Mark},
  journal={Enseign. Math.(2)},
  volume={56},
  number={1-2},
  pages={3--47},
  year={2010}
}
	
\end{biblist}
\end{bibdiv}

\end{document}